\newtheorem{theo}{Theorem}
\newtheorem{theorem}[theo]{Theorem}
\newtheorem{cor}[theo]{Corollary}
\newtheorem{definition}{Definition}[section]
\newtheorem{remark}{Remark}[section]
\newtheorem{lemma}{Lemma}[section]
\newtheorem{example}{Example}[section]
\newtheorem{prop}{Proposition}
\DeclareMathOperator{\Ad}{Ad}
\newcommand\restr[2]{{
  \left.\kern-\nulldelimiterspace
  #1
  \vphantom{\big|}
  \right|_{#2}
  }}
\newcommand{\R}{\mathbb{R}}
\newcommand{\midarrowr}{\tikz \draw[-triangle 90] (0,0) -- +(.1,0);}
\newcommand{\midarrowu}{\tikz \draw[-triangle 90] (0,0) -- +(0,.1);}
\newcommand{\midarrowd}{\tikz \draw[-triangle 90] (0,0.1) -- (0,0);}
\newcommand\DrawGenus[7]{
  \pgfmathsetmacro{\xstart}{#1 - (0.985*#4)}
  \pgfmathsetmacro{\ystart}{#2 + (0.2*#3)}
	\draw[color = #6, rotate around={#5:(#1,#2)}, #7] (\xstart, \ystart) arc (190:350:#4  and #3);
	\draw[color = #6, rotate around={#5:(#1,#2)}, #7] (\xstart, \ystart) arc (190:210:#4  and #3) arc (150:30:#4  and #3) arc (330:350:#4  and #3);
}
\newcommand\DrawFilledGenus[8]{
  \pgfmathsetmacro{\xstart}{#1 - (0.985*#4)}
  \pgfmathsetmacro{\ystart}{#2 + (0.2*#3)}
	\draw[color = #6, rotate around={#5:(#1,#2)}, #7] (\xstart, \ystart) arc (190:350:#4  and #3);
	\draw[color = #6, rotate around={#5:(#1,#2)}, #7] (\xstart, \ystart) arc (190:210:#4  and #3) arc (150:30:#4  and #3) arc (330:350:#4  and #3);
	\draw[color = #6, rotate around={#5:(#1,#2)}, #7, fill = #8] (\xstart, \ystart) arc (190:210:#4  and #3) arc (150:30:#4  and #3) arc (-30:-150:#4  and #3);
}
\newcommand\DrawDonut[7]{
  \pgfmathsetmacro{\fctr}{.08}
  \pgfmathsetmacro{\newwidth}{0.5*#4}
  \pgfmathsetmacro{\newheight}{0.5*#3}
  \draw[color = #6, rotate around={#5:(#1,#2)}, #7] (#1, #2) ellipse (#4  and #3);
  \DrawGenus{#1}{#2}{\newheight}{\newwidth}{#5}{#6}{#7}
}
\newcommand\DrawFilledDonutops[8]{
  \pgfmathsetmacro{\fctr}{.08}
  \pgfmathsetmacro{\newwidth}{0.5*#4}
  \pgfmathsetmacro{\newheight}{0.5*#3}
  \draw[color = #6, rotate around={#5:(#1,#2)}, #7, fill = #6, opacity = .6] (#1, #2) ellipse (#4  and #3);
  \DrawFilledGenus{#1}{#2}{\newheight}{\newwidth}{#5}{#6}{#7}{#8}
}
\tikzstyle{mytheorembox} = [draw=vdgreen, fill=blue!20, very thick, rectangle, rounded corners, inner sep=10pt, inner ysep=15pt]
\tikzstyle{mytheoremfancytitle} =[fill=vdgreen, text=white]
\definecolor{vdblue}{rgb}{0,0,.3}
\definecolor{dblue}{rgb}{0,0,.7}
\definecolor{lblue}{rgb}{.3,.3,1}
\definecolor{vlblue}{rgb}{.7,.7,1}
\definecolor{vvlblue}{rgb}{.9,.9,1}
\definecolor{vdred}{rgb}{.3,0,0}
\definecolor{dred}{rgb}{.7,0,0}
\definecolor{lred}{rgb}{1,.3,.3}
\definecolor{vlred}{rgb}{1,.7,.7}
\definecolor{vdgreen}{rgb}{0,.2,0}
\definecolor{dgreen}{rgb}{0,.4,0}
\definecolor{lgreen}{rgb}{.3,1,.3}
\definecolor{vlgreen}{rgb}{.7,1,.7}
\definecolor{lyellow}{rgb}{1,1,.3}
\definecolor{gray1}{rgb}{0.22,0.22,0.22}
\definecolor{gray2}{rgb}{0.28,0.28,0.28}
\definecolor{gray3}{rgb}{0.36,0.36,0.36}
\definecolor{gray4}{rgb}{0.44,0.44,0.44}
\definecolor{gray5}{rgb}{0.52,0.52,0.52}
\definecolor{gray6}{rgb}{0.6,0.6,0.6}
\definecolor{gray7}{rgb}{0.68,0.68,0.68}
\definecolor{gray8}{rgb}{0.76,0.76,0.76}
\definecolor{color1}{rgb}{1,0,0}
\definecolor{color2}{rgb}{0.98,0,0.816}
\definecolor{color3}{rgb}{0.717,0,1}
\definecolor{color4}{rgb}{0,0,1}
\definecolor{color5}{rgb}{0,1,1}
\definecolor{color6}{rgb}{0,1,0}
\definecolor{color8}{rgb}{1,1,0}
\definecolor{color7}{rgb}{1,0.651,0}
\begin{document}
\title{Reduction theory for singular symplectic manifolds and singular forms on moduli spaces}
\author{Anastasia Matveeva}
\address{ Anastasia Matveeva,
Laboratory of Geometry and Dynamical Systems, Department of Mathematics at UPC and Barcelona Graduate School of Mathematics}
 \thanks{ The project that gave rise to these results received the support of a fellowship from ”la Caixa” Foundation (ID 100010434). The fellowship code is LCF/BQ/DI18/11660046. Anastasia Matveeva and Eva Miranda are partially supported  by the the Spanish State Research Agency, through the grant PID2019-103849GB-I00 of AEI /10.13039/501100011033.  }
\author{Eva Miranda}\address{Eva Miranda,
Laboratory of Geometry and Dynamical Systems, Department of Mathematics, EPSEB, Universitat Polit\`{e}cnica de Catalunya-IMTech
in Barcelona and
\\ CRM Centre de Recerca Matem\`{a}tica, Campus de Bellaterra
Edifici C, 08193 Bellaterra, Barcelona
 }\thanks{  Eva Miranda is supported by the Catalan Institution for Research and Advanced Studies via an ICREA Academia Prize 2016 and an ICREA Academia Prize 2021 and by the Spanish State
Research Agency, through the Severo Ochoa and Mar\'{\i}a de Maeztu Program for Centers and Units
of Excellence in R\&D (project CEX2020-001084-M) } \thanks{
\textbf{Dedicatory:}  \emph{ Marc Herault was about to start his undergraduate thesis with Eva Miranda in cosupervision with Sergei Gukov and Angus Gruen at the California Institute of Technology.  Marc was a charismatic member of the Laboratory of Geometry and Dynamical Systems at UPC. His passion to understand the interaction of mathematics and physics remains an inspiration for us.  } }
 \email{ eva.miranda@upc.edu}

\begin{abstract} The investigation of symmetries of $b$-symplectic manifolds and folded-symplectic manifolds is well-understood when the group under consideration is a torus (see, for instance, \cite{guillemin2015toric,gualtierietal,GMWconvexity} for $b$-symplectic manifolds and \cite{anarita, CM} for folded symplectic manifolds). However, reduction theory has not been set in this realm in full generality. This is fundamental, among other reasons, to advance in the \emph{\lq\lq quantization commutes with reduction"} programme for these manifolds initiated in \cite{GMWgeomq, GMWgeomqbm}. In this article, we fill in this gap and investigate the Marsden-Weinstein reduction theory under general symmetries for general $b^m$-symplectic manifolds and other singular symplectic manifolds, including certain folded symplectic manifolds. In this new framework, the set of admissible Hamiltonian functions is larger than the category of smooth functions as it takes the singularities of the differential forms into account. The quasi-Hamiltonian set-up is also considered and brand-new constructions of (singular) quasi-Hamiltonian spaces are obtained via  a reduction procedure and the fusion product.
\end{abstract}

\dedicatory{Dedicated to the memory of Marc Herault}

\maketitle

\section{Introduction}
    Reduction theory is crucial in  geometry and physics. It reconciles the abstract concept of symmetry of a system with the practical implementation of changes of variables to simplify the system. The intuition that the number of degrees of freedom reduces under the existence of a group symmetry can be encoded as a reduction theorem. The first ones to observe this were probably Emmy Noether \cite{EN} and Sofia Kovalevskaya  who applied the idea of symmetry to actual mechanical systems (see, for instance, \cite{SK}).
    
    The idea can be taken to different levels of sophistication. When reduction is applied to symplectic geometry, an interesting phenomenon occurs: For a group of dimension $k$, the reduction can be doubled and the system can be simplified by  $2k$ degrees of freedom. This fact is known in the literature as Marsden-Weinstein reduction \cite{MW}. In symplectic geometry, the existence of symmetries is very special. Locally any symplectic manifold is a cotangent bundle, and the existence of symmetries on the base manifold $M$ lifts certain actions (Hamiltonian) to the cotangent bundle. In this line of thought, the idea of symplectic reduction reduces by $2 \dim G$ the number of degrees of freedom and produces an actual symplectic manifold of dimension $2n-2 \dim G$ when the action is free. For non-free actions, the structure of the reduced space is that of a stratified manifold (see \cite{SL}), and symplectic orbifolds are obtained for locally free actions (see \cite{GGK}).
   
   The celebrated Marsden-Weinstein theorem \cite{MW} endows the reduced manifold determined by a fixed-energy level and its symmetries with a symplectic structure. Marsden-Weinstein quotients are closely related to several moduli spaces in geometry and, more concretely, to Geometric Invariant Theory. Frances Kirwan   \cite{FK} related classical Geometric Invariant Theory to symplectic quotients. Symplectic quotients are naturally connected to certain moduli spaces. Michael Atiyah and Raoul Bott unveiled the symplectic structure on the space of flat connections in their celebrated article  \cite{AB}. This was just the commencement of a brave new world \cite{NH, BGPH, BGP} building bridges between the geometry and physics community.

   In this article, we extend the concept of Marsden-Weinstein symplectic reduction to include symplectic manifolds with singular structures and extend the admissible Hamiltonian functions beyond smooth functions. We do this for a class of Poisson manifolds that have been recently closely examined: including $b$-symplectic or log-symplectic (and $b^m$-symplectic) manifolds and \textit{certain folded symplectic manifolds}. 
   Several authors considered group actions on these manifolds (see, for instance, \cite{guillemin2015toric,gualtierietal, GMWconvexity, KMS, KM, KM2, MP, BKM, BKM2, GMPS2}). In \cite{guillemin2015toric, GMWconvexity, CGW}, Delzant type polytopes were investigated for toric actions on manifolds endowed with symplectic structures with singularities (of  $b$ or folded type). Toric symmetries have also been used in the study of formal geometric quantization \cite{GMWgeomq, GMWgeomqbm} where the set of Hamiltonian functions extends to $b^m$-functions. However, a typical picture where the reduced manifolds are analyzed is missing in the literature even for the case of $b$-symplectic manifolds. In this article, we fill in this gap in the literature and extend the reduction to both more general singularities and the quasi-Hamiltonian set-up.

   Our motivating example is a moduli space of flat connections on a symplectic surface. It is possible to associate a geometrical template of identified polygons to such a problem (this is classical; see, for instance, the clear exposition in \cite{DM}). We start with a symplectic template and use the desingularization technique of \cite{GMW} to obtain a singular toy model  (of $b^m$-type for even $m$) by an \emph{ad-hoc} construction from a symplectic template. This moduli space which is symplectic can then be also seen as a reduction obtained from a singular model. This toy example inspires us to extend the identification between moduli space and symplectic reduction to the singular realm and formally define the symplectic reduction for arbitrary Lie groups for $b^m$-symplectic manifold. Other motivating examples come from Yang-Mills fields theories on manifolds with boundary (see \cite{MMN}).

    In order to define the Marsden-Weinstein reduction in the singular realm,  we first need to refine the slice theorem for group actions to consider these singularities in the underlying geometrical structure. As the slice theorem gives a normal form for the geometrical structure, it yields a proper structure and group action on the set of orbits induced on the pre-image of a regular point by the moment map. In particular, this defines a reduced space which is symplectic whenever the highest modular weight of the transverse $S^1$-action is non-vanishing. So, in this case, the reduction procedure eliminates the singularity from the original symplectic structure.

    The philosophical approach to the reduction theory in that article is that of simplifying not only the symmetries of the system but also the singularities of the symplectic structure, as we prove when the highest modular weight is non-vanishing.
   
    Other approaches to the removal, blow-up or desingularization of singularities in this theory have been developed by Guillemin-Miranda-Weitsman in \cite{GMW}.
    This desingularization technique in \cite{GMW} will be a close ally in our endeavour as it puts the reduction and the slice theorem for these different singular symplectic manifolds on equal footing. In particular, it allows us to extend the notion of \emph{reduction by stages} to the new category of singular symplectic manifolds and more general Hamiltonian functions.  
    
The idea of reduction also prevails outside the symplectic realm. Quasi-Hamiltonian spaces (confer \cite{AMM, boalch, AMW, HJS}) provide a natural generalization of Hamiltonian spaces and understanding their properties can be revealing in terms of representation theory. As shown in \cite{AMM}, The category of
$G$-quasi-Hamiltonian spaces is equivalent to a subcategory of the category of
infinite-dimensional symplectic manifolds with Hamiltonian actions  of the loop group of $G$. Thus exploring this extension allows us gain understanding of infinite-dimensional analogues as in \cite{tudor}.
    By the same token, we consider quasi-Hamiltonian actions and reduction as a natural completion of the picture, specially guided by our motivating example. In doing so, we also extend the reduction scheme \lq\lq by stages" to the singular quasi-Hamiltonian realm. We obtain new examples of quasi-Hamiltonian spaces by combining classical quasi-Hamiltonian constructions with techniques native to $b^m$-Hamiltonian spaces using the fusion product. These structures can be generalized further to $E$-quasi-Hamiltonian spaces.

    \vspace{2mm}
   
   \textbf{Organization of this paper:  } In section 2, we give the main definitions and preliminary results needed in the article. Section 3 gives a motivating example of a singular structure on a moduli space, taking the Atiyah-Bott space of flat connections on a surface as a starting point. In section 4, we first recall several schemes where slices have been proved to be helpful, and we prove a $b^m$-symplectic slice theorem, which will be a building block for the reduction theory.   In this section, we also investigate the compatibility between different slice theorems and the desingularization procedures explained in \cite{GMW}. In section 5, we  prove the reduction theorem which is the main theorem  in this article and which  can be succinctly  stated as follows:
  \vspace{1mm}
  \begin{center}
    \fbox{\begin{minipage}{16cm}\emph{The $b^m$-Hamiltonian reduction of a \textbf{$b^m$-symplectic manifold} with nonzero highest modular weight is a \textbf{symplectic manifold}. }\end{minipage}}
   \end{center}
  
   \vspace{1mm}
  The reduction removes the singularity from the symplectic structure. So, as a motto \emph{the reduction entails a desingularization}. In section 6, we also prove that this reduction procedure commutes with the desingularization procedure in \cite{GMW} and observe that reduction can be done by stages. In the last section, we discuss several generalizations of these ideas to more general notions of moment map on $b^m$-manifolds which are not necessarily symplectic and focus on the quasi-Hamiltonian reduction in the singular framework. The mnemonics and removal of the singularity works in the singular quasi-Hamiltonian case as the $b^m$-Hamiltonian case. Reduction with the appropriate group gets rid of the singularity of the form. Our constructions via the fusion product provide brand-new examples of non-trivial singular quasi-Hamiltonian spaces. 
  
 \vspace{1mm}

\textbf{Acknowledgements:}  We are grateful to Anton Alekseev for inspiring conversations about quasi-Hamiltonian spaces. We thank Tobias Diez and Tudor Ratiu for several discussions. Thanks are due to Juan Carlos Marrero, and Edith Padrón for calling our attention to the articles \cite{MPR} and \cite{GZ}.

\section{Preliminaries}
    
   The letter "$b$" on $b$-symplectic theory is reminiscent of  the  work on calculus on manifolds with \emph{boundary}  by Melrose \cite{Mel}. However,  in the theory of $b$-symplectic manifolds the notion is extended to consider a hypersurface which, a posteriori, will turn out to be  the critical hypersurface $Z$ of the generalized symplectic structure.
   
   $b$-Manifolds were first introduced by Melrose in his book \cite{Mel} to give proof of the Atiyah-Patodi-Singer theorem using the same conceptual proof as in the Atiyah-Singer theorem for manifolds with boundary. In \cite{guillemin2014symplectic} this framework was extended, and Poisson structures were associated to $b$-forms of degree $2$ as bivector fields that drop rank along a critical hypersurface. These vector fields can be seen as dual to a two-form with singularity along the hypersurface. The present paper concentrates on a generalization of such forms called $b^m$-symplectic, where a more general transversality condition is imposed. $b$-Symplectic forms, defined and extensively studied in the works \cite{Nest, Mel, guillemin2011codimension, guillemin2014symplectic, GL}, can be seen as a particular case of $b^m$-symplectic for $m=1$, when  a defining function $f$  has been chosen for the critical hypersurface $Z$ such that $Z = \{x \in M | f(x) = 0 \}$. We briefly remind the main definitions and concepts of $b^m$-symplectic geometry to use the mentioned framework.

    This section can be used as a glossary with definitions of singular symplectic structures and corresponding normal forms. The reader familiar with this theory can  skip it.
    
    \subsection{$b^m$-Symplectic manifolds}
    
    Analogously to symplectic geometry, $b^m$-symplectic manifolds can be seen as a particular class of Poisson manifolds.
    
    \begin{definition} \label{def:bMan}
        A $b$-\textbf{manifold} $(M, Z)$ is an oriented manifold $M$ together with an oriented hypersurface $Z$. A $b$-map is a map
        $$
                f : (M_1, Z_1) \to (M_2, Z_2)
        $$
        so that $f$ is transverse to $Z_2$ and $f^{-1}(Z_2) = Z_1$.
    \end{definition}

    \begin{definition}
        A \textbf{$b^m$-vector field} is a vector field $v$ on $M$, such that it is tangent to order $m$ at $Z$. A \textbf{$b^m$-form} is a differential form dual to a $b^m$-vector field.
    \end{definition}
    
    The sets of $b^m$-vector fields and $b^m$-forms respectively are  locally generated by $\{x_1^m \frac{\partial}{\partial x_1}, \frac{\partial}{\partial x_2}, \ldots, \frac{\partial}{\partial x_n}\}$ and $\{\frac{dx_1}{x_1^m}, \frac{dx_2}{x_2}, \ldots, \frac{dx_n}{x_n}\}$, respectively. Due to the Serre-Swan theorem \cite{swan1962vector}, given a $b$-manifold $(M, Z)$, there exists a unique vector bundle $^{b^m} TM$ all whose smooth sections are $b^m$-vector fields. Such a bundle is called a \textbf{$b^m$-tangent bundle}. Analogously, a \textbf{$b^m$-cotangent bundle} can be defined either as dual to the tangent one:
    $$
    ^{b^m} T^*M = (^{b^m} TM)^*
    $$
    or as a bundle, all smooth sections of which are $b^m$-forms.
    
    This allows us to introduce $^{b^m} \Omega^k(M)$ as $\bigwedge ^k(^{b^m} T^* M)$ and the associated $b^m$-cohomology $^{b^m} H^*(M)$. The following theorem relates $b^m$-cohomology to de Rham cohomology \cite{scott}.
    
    \begin{theorem}[The $b^m$-Mazzeo-Melrose]
        $^{b^m} H^*(M) \cong H^*(M) \oplus \left ( H^{*-1} (Z) \right )^m$.
    \end{theorem}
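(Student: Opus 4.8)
The plan is to reduce the statement to the classical Mazzeo--Melrose isomorphism by producing an explicit normal form for $b^m$-forms near the critical hypersurface and reading off the cohomology from it. First I would fix, once and for all, a defining function $x$ for $Z$ together with a tubular neighbourhood $U \cong Z \times (-\varepsilon, \varepsilon)$ and its projection $\pi \colon U \to Z$, so that $Z = \{x = 0\}$ and $\tfrac{dx}{x^m}$ is the distinguished singular generator of ${}^{b^m}T^*M$ there. The first key step is the local decomposition: using a Taylor expansion in $x$ of the singular part of a $b^m$-form and truncating at order $m$, every $\omega \in {}^{b^m}\Omega^k(M)$ can be written near $Z$ as
$$
\omega = \sum_{i=1}^{m} \frac{dx}{x^i} \wedge \pi^* \alpha_i + \beta,
\qquad \alpha_i \in \Omega^{k-1}(Z),\ \beta \in \Omega^k(M),
$$
where $\beta$ is an honest smooth form; globally one glues this with a partition of unity, the singular behaviour being supported near $Z$. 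The appearance of exactly $m$ residue forms $\tfrac{dx}{x},\dots,\tfrac{dx}{x^m}$ is precisely what will produce the $m$ copies of $H^{*-1}(Z)$.

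Next I would compute the differential in this normal form. Since $d\bigl(\tfrac{dx}{x^i}\bigr)=0$ and the $\alpha_i$ are pulled back from $Z$, one gets
$$
d\omega = \sum_{i=1}^{m} \frac{dx}{x^i} \wedge \pi^*\!\bigl(-d\alpha_i\bigr) + d\beta .
$$
Because the singular terms $\tfrac{dx}{x^i}\wedge\pi^*(\cdot)$ carry genuine poles of orders $x^{-1},\dots,x^{-m}$ while $d\beta$ is smooth, these pieces lie in complementary summands, so $\omega$ is closed if and only if every $\alpha_i$ is closed on $Z$ and $\beta$ is closed on $M$. This realises the decomposition as a map of complexes fitting into a short exact sequence
$$
0 \longrightarrow \Omega^*(M) \longrightarrow {}^{b^m}\Omega^*(M)
\xrightarrow{\ (\mathrm{Res}_1,\dots,\mathrm{Res}_m)\ }
\bigoplus_{i=1}^{m} \Omega^{*-1}(Z) \longrightarrow 0,
$$
where $\mathrm{Res}_i$ extracts the coefficient $\alpha_i$ and the left map is the inclusion of smooth forms. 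I would then pass to the associated long exact sequence in cohomology.

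The isomorphism then follows once I show the connecting homomorphism vanishes, equivalently that the sequence splits on cohomology. This is where the explicit normal form pays off: given a closed form $\alpha_i$ on $Z$, the $b^m$-form $\tfrac{dx}{x^i}\wedge\pi^*\alpha_i$ is closed and maps to $\alpha_i$ under $\mathrm{Res}_i$, so every class in $\bigoplus_{i=1}^m H^{*-1}(Z)$ lifts to a $b^m$-cohomology class. Hence the long exact sequence breaks into split short exact sequences $0 \to H^k(M) \to {}^{b^m}H^k(M) \to (H^{k-1}(Z))^m \to 0$, yielding the claimed direct sum.

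I expect the main obstacle to be the book-keeping and global well-definedness rather than any deep idea: one must check that the Taylor-truncation decomposition is compatible with $d$ at every order, so that closedness and exactness split summand by summand, and that the residue maps $\mathrm{Res}_i$, which \emph{a priori} depend on the choice of $x$ and of $\pi$, descend to maps on cohomology independent of these choices. I would dispose of the choice-dependence by a homotopy argument comparing the complexes associated to two tubular structures, and verify that exactness $\omega = d\eta$ forces each $\alpha_i$ and $\beta$ to be exact by decomposing $\eta$ in the same normal form --- the computation of $d\eta$ above shows its residues are exactly $-d\gamma_i$ and its smooth part is $d\delta$.
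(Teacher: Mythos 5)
Your proposal is correct, and in fact the paper does not prove this theorem at all: it is quoted from Scott's work on $b^k$-manifolds \cite{scott}, and your argument --- the Taylor/Laurent decomposition $\omega=\sum_{i=1}^m \frac{dx}{x^i}\wedge\pi^*\alpha_i+\beta$ near $Z$, the resulting short exact sequence $0\to\Omega^*(M)\to{}^{b^m}\Omega^*(M)\to\bigoplus_{i=1}^m\Omega^{*-1}(Z)\to 0$ given by the $m$ residue maps, and the vanishing of the connecting homomorphism via the explicit closed lifts $\frac{dx}{x^i}\wedge\pi^*\alpha_i$ --- is essentially the proof in that reference, which generalizes the $m=1$ argument of Guillemin--Miranda--Pires. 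Two minor simplifications: once the connecting homomorphism vanishes, the splitting is automatic because everything in sight is a real vector space, so you need not argue further for it; and the choice-independence you flag at the end is not actually required for the statement, since the isomorphism is genuinely non-canonical --- the higher residues (and, for $m\ge 2$, the $b^m$-structure itself) depend on the choice of (jet of) defining function $x$ and of the tubular neighbourhood $\pi$, and a single fixed choice suffices to produce the isomorphism. The only detail worth tightening is the surjectivity of the residue map globally: one multiplies $\sum_i\frac{dx}{x^i}\wedge\pi^*\alpha_i$ by a cutoff $\rho(x)$ equal to $1$ near $Z$, and checks that this changes the form only by a smooth term, so the residues are unaffected.
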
    
    
    Among $b^m$-forms, we will focus on forms of degree two that mimic symplectic forms in the de Rham complex.
    \begin{definition}
        Let $(M^{2n}, Z)$ be a $b$-manifold, where $Z$ is the critical hypersurface as in \ref{def:bMan}. Let $\omega \in ^{b^m} \Omega^2(M)$ be a closed $b^m$-form. We say that $\omega$ is \textbf{$b^m$-symplectic} if $\omega_p$ is of maximal rank as an element of $\Lambda^2 \left ( ^{b^m} T_p^* M \right )$ for all $p \in M$. We call the triple $(M, Z, \omega)$ a \textbf{$b^m$-symplectic manifold}.
    \end{definition}
    
    It is possible to describe these forms more precisely in a neighbourhood $U$ of the critical set $Z$. Inside $U=Z\times (-\epsilon, \epsilon)$,   $\omega$ may be written as

    \begin{equation}\label{eqn:newlaurent}
        {\omega = \sum_{j = 1}^{m}\frac{df}{f^j} \wedge \pi^*(\alpha_{j}) +  \beta }
    \end{equation}
    \noindent where the $\alpha_j$ are closed one forms on $Z$, $\beta$  is a  closed 2-form on $U$, $\pi:U\longrightarrow Z$ is the projection and $f$ is the defining function for the critical set $Z$. Non-degeneracy of the form $\omega$
    implies that $\beta\vert_{Z}$ is of maximal rank and $\alpha_m$ is nowhere vanishing. $\alpha_m$ defines the symplectic foliation of the Poisson structure associated with $\omega$, and $\beta$ gives the symplectic form on the leaves of this foliation.

    The following theorem pictures the non-existence of local invariants for $b^m$-symplectic forms other than  dimension in a neighbourhood of a point on the critical hypersurface.
    \begin{theorem}[$b^m$-Darboux theorem]
        Let $\omega$ be a $b^m$-symplectic form on $(M^{2n}, Z)$. Let $p \in Z$. Then we can find a local coordinate chart $(x_1, y_1, \ldots , x_n, y_n)$ centered at $p$ such that hypersurface $Z$ is locally defined by $y_1 = 0$ and
        $$
            \omega = dx_1 \wedge \frac{dy_1}{y_1^m} + \sum_{i = 2}^n dx_i \wedge dy_i.
        $$
    \end{theorem}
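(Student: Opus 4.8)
The plan is to run a Moser deformation argument inside the $b^m$-category. The model is the target form $\omega_0 = dx_1\wedge \frac{dy_1}{y_1^m} + \sum_{i=2}^n dx_i\wedge dy_i$, and the strategy is to first match $\omega$ to $\omega_0$ at the single point $p$ by a pointwise (linear) normalization, and then to absorb the remaining discrepancy over a neighborhood by the flow of a time-dependent $b^m$-vector field.

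First I would normalize at $p$. Choosing a defining function $f$ for $Z$ with $y_1 = f$, the fiber ${}^{b^m}T_p^*M$ carries the distinguished generator $\frac{df}{f^m}$ together with the regular covectors, and $\omega_p$ is a nondegenerate alternating form on this $2n$-dimensional space. Standard symplectic linear algebra---pairing the singular generator $\frac{df}{f^m}$ with a regular covector $dx_1$ and applying the linear Darboux normal form to the complementary regular block---produces coordinates $(x_1,y_1,\dots,x_n,y_n)$ centered at $p$, with $Z=\{y_1=0\}$, in which $\omega_p=(\omega_0)_p$ as elements of $\Lambda^2({}^{b^m}T_p^*M)$. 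The semilocal expansion \eqref{eqn:newlaurent} confirms that this is the correct model: the leading term $\frac{df}{f^m}\wedge\pi^*\alpha_m$ with $\alpha_m$ nowhere vanishing becomes the singular block $dx_1\wedge\frac{dy_1}{y_1^m}$, while $\beta\vert_Z$ of maximal rank supplies $\sum_{i\ge 2}dx_i\wedge dy_i$.

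Next I would deform. Shrinking the chart $U$ so that $U$ and $U\cap Z$ are contractible, the $b^m$-Mazzeo--Melrose theorem gives ${}^{b^m}H^2(U)\cong H^2(U)\oplus\bigl(H^1(U\cap Z)\bigr)^m=0$, so the closed $b^m$-two-form $\omega-\omega_0$ has a $b^m$-one-form primitive; taking the primitive through a homotopy operator based at $p$ yields $\mu$ with $d\mu=\omega-\omega_0$ and $\mu_p=0$, the latter because $\omega-\omega_0$ vanishes at $p$. Setting $\omega_t=\omega_0+t(\omega-\omega_0)$ for $t\in[0,1]$, each $\omega_t$ agrees with $\omega_0$ at $p$ and is therefore $b^m$-nondegenerate on a possibly smaller neighborhood of $p$, uniformly in $t$; inverting it, I solve $\iota_{X_t}\omega_t=-\mu$ for a time-dependent $b^m$-vector field $X_t$ with $X_t(p)=0$. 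Since $X_t$ is a $b^m$-vector field it is tangent to $Z$, so its flow $\phi_t$ is defined for $t\in[0,1]$ near $p$, fixes $p$, and preserves $Z$; the Moser identity $\frac{d}{dt}\phi_t^*\omega_t=\phi_t^*\bigl(d\iota_{X_t}\omega_t+(\omega-\omega_0)\bigr)=\phi_t^*\bigl(-(\omega-\omega_0)+(\omega-\omega_0)\bigr)=0$ then gives $\phi_1^*\omega=\omega_0$, which is exactly the asserted chart.

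I expect the main obstacle to be the analytic bookkeeping of the singular part rather than the Moser mechanism itself. Concretely, one must verify that the homotopy operator realizing the local $b^m$-Poincar\'e lemma keeps the primitive $\mu$ inside the $b^m$-de Rham complex---i.e.\ that retracting \eqref{eqn:newlaurent} toward $p$ does not raise the Laurent order beyond $m$---and that $\mu$ genuinely vanishes to first order at $p$. Correspondingly, the field $X_t$ obtained by inverting $\omega_t$ must be checked to be a bona fide $b^m$-vector field, vanishing to order $m$ along $Z$, so that $\phi_t$ is a $b$-map: this is what guarantees the resulting change of coordinates preserves both $Z$ and the order of the pole, and it is the one step with no counterpart in the classical smooth Darboux theorem.
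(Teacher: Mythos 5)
Your proposal is correct and follows exactly the route the paper itself indicates: the paper gives no detailed proof of this theorem, remarking only that ``the proof of this local normal form relies on the path method,'' and your argument---pointwise linear normalization in $\Lambda^2\left({}^{b^m}T_p^*M\right)$, a primitive of $\omega-\omega_0$ via the local $b^m$-Poincar\'e lemma, and a Moser flow generated by a $b^m$-vector field tangent to $Z$---is precisely that path-method proof as carried out in the references (Guillemin--Miranda--Pires for $m=1$, Scott for general $m$). The two verifications you flag as the real content (that the primitive can be taken inside the $b^m$-de Rham complex without raising the Laurent order, and that inverting $\omega_t$ produces a genuine $b^m$-vector field so the flow preserves $Z$ and the pole order) are exactly the right ones and are standard in this setting.
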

    
    The proof of this local normal form relies on the path method.
    
    The $b^m$-analogue of the Moser theorem for symplectic manifolds is convenient for analyzing other invariants (local, semilocal, global) and is proved in  \cite{guillemin2014symplectic}.
   The equivariance of the path method yields the following generalization of the Moser path method (see ~\cite{MP} and \cite{guillemin2015toric}) under the additional structure of a group action.
    
    \begin{theorem}[\textbf{Equivariant $b^m$-Moser Theorem}]\label{theorem:bmmoser}
        Let $\omega_0$ and $\omega_1$ be two $b^m$-symplectic forms on $(M,Z)$ defining the same $b^m$-cohomology class $[\omega_0] = [\omega_1]$ for closed $(M^{2n}, Z)$. Assume there exists a path $\omega_t$ of $b^m$-symplectic forms connecting $\omega_0$ and $\omega_1$  then there exist a $b^m$-symplectomorphism
        $$
            \varphi: \left ( M^{2n}, Z\right ) \to \left ( M^{2n}, Z\right )
        $$
        such that $\varphi^*(\omega_1) = \omega_0$.

         If $(M, Z)$ admits an  action of a compact Lie group $G$ preserving the path $\omega_t$, then $\varphi$ can be chosen equivariant with respect to the $G$-action.
    \end{theorem}

     $b^m$-Symplectic manifolds are dual to $b^m$-Poisson, which allows us to describe these objects in two different languages using either bi-vector fields or differential forms. It is then possible to introduce invariants native to Poisson geometry, such as the \emph{modular vector field}.  
     
    Even though the only local invariant of a $b^m$-symplectic manifold is the dimension, it turns out that the geometry induced by the Poisson structure on the critical set $Z$ yields new semilocal invariants. The structure induced on $Z$ is indeed \emph{cosymplectic}.
  
    Using the flow of the modular vector field, we can define a symplectic mapping torus structure of $Z$, as proved in \cite{guillemin2011codimension}). This mapping group structure is also present on the critical set of a $b^m$-symplectic manifold.

    \begin{definition}\label{mappingtorusstructure}
        Let $(M, Z)$ be a $b^m$-symplectic manifold and suppose that $Z$ is compact and connected and that its symplectic foliation has a compact leaf $\mathcal{L}$. Then the critical set $Z$ is a mapping torus which can be explicitly described as follows: taking any modular vector field $v_{mod}$, there exists a  number $c>0$ such that
        $$
            Z \cong \frac{[0, c]\times \mathcal{L}}{(0,x) \sim(c,\phi(x))}
        $$

        where the time $t$-flow of $v_{mod}$ corresponds to the translation by $t$ in the first coordinate. In particular, $\phi$ is the time $c$-flow of $v_{mod}$. 
        
        The number $c>0$ above is called the \textbf{modular period} of $Z$ and does not depend on the choice of the modular vector field $v_{mod}$.
    \end{definition}
    
\subsection{Folded symplectic manifolds}
    A symplectic form $\omega$ on a manifold $M$ induces a natural volume form on the manifold $\omega^n$, sometimes called the Liouville volume. The next level of sophistication is to consider forms $\omega$ such that $\omega^n$ might vanish at some points but with \emph{good transversality properties}. This is precisely the notion of \emph{folded symplectic structures}.
    \begin{definition}
        Let $(M^{2n}, \omega)$ be a manifold with $\omega$ a closed $2$-form such that the map
        $$
            p \in M \mapsto \left ( \omega(p)\right )^n \in \Lambda^{2n} \left ( T^*M \right )
        $$
        is transverse to the zero section, then $Z = \{p \in M | \left (\omega(p) \right )^n = 0 \}$ is a hypersurface and we say that $\omega$ defines a \textbf{folded symplectic structure} on $(M, Z)$ if additionally its restriction to $Z$ has maximal rank. We call the hypersurface $Z$ \textbf{folding hypersurface} and the pair $(M,Z)$ is a \textbf{folded symplectic manifold}.
    \end{definition}
    
    For simplicity, further, we use the normal form for the folded symplectic structures first described by Martinet in \cite{mart}.
    
    \begin{theorem}[Folded Darboux theorem]
        Let $\omega$ be a folded symplectic form on $(M^{2n}, Z)$ and $p \in Z$. Then we can find a local coordinate chart $(x_1, y_1, \ldots, x_n, y_n)$ centered at $p$ such that the hypersurface $Z$ is locally defined by $y_1 = 0$ and
        $$
            \omega = y_1 dx_1 \wedge dy_1 + \sum_{i = 2}^{n} dx_i \wedge dy_i.
        $$
    \end{theorem}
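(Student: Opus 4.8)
The plan is to establish this Martinet-type normal form by a relative Moser argument, reducing the folded model to a standard presymplectic normalization along $Z$ together with a controlled interpolation across the folding hypersurface. First I would record the infinitesimal geometry forced by the two hypotheses at a point $p\in Z$. The transversality of $q\mapsto(\omega(q))^n$ to the zero section exhibits $Z$ as the locus where $\omega^n$ vanishes simply, so $\omega_p$ is degenerate; since the corank of a $2$-form is even, together with the requirement that $i^*\omega$ have maximal rank on the odd-dimensional space $T_pZ$ (where $i\colon Z\hookrightarrow M$), this pins down $\ker\omega_p$ to a $2$-plane and $\ker(\omega_p|_{T_pZ})$ to a line. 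Hence exactly one kernel direction is tangent to $Z$ and the other is transverse, which is precisely the infinitesimal picture of the target form $\omega_0:=y_1\,dx_1\wedge dy_1+\sum_{i=2}^n dx_i\wedge dy_i$, whose kernel along $\{y_1=0\}$ is $\langle\partial_{x_1},\partial_{y_1}\rangle$.

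Second, I would normalize the restriction to $Z$. The form $i^*\omega$ is closed of constant rank $2n-2$, so its kernel is an integrable line field, and the presymplectic Darboux--Weinstein theorem supplies coordinates $(x_1,x_2,y_2,\dots,x_n,y_n)$ on $Z$ with $i^*\omega=\sum_{i=2}^n dx_i\wedge dy_i$ and $\ker(i^*\omega)=\langle\partial_{x_1}\rangle$. Choosing a defining function $y_1$ for $Z$ extends these to coordinates on a neighbourhood in $M$ with $Z=\{y_1=0\}$. A further adjustment adapted to the transverse kernel direction, using that transversality makes the leading $y_1$-coefficient of the degenerate block nonvanishing (so it may be normalized to exactly $y_1$ rather than an unknown simply vanishing function), lets me arrange that $\omega$ agrees with $\omega_0$ to the relevant order along $Z$; in particular $i^*\omega=i^*\omega_0$.

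Third comes the Moser interpolation. Put $\omega_t:=(1-t)\omega_0+t\,\omega$; each $\omega_t$ is closed and, near $Z$, folded of the same type. Since $i^*(\omega-\omega_0)=0$, the relative Poincar\'e lemma writes $\omega-\omega_0=d\nu$ with $i^*\nu=0$, i.e.\ $\nu$ vanishing on $Z$. Differentiating the desired identity $\psi_t^*\omega_t=\omega_0$ and using $\mathcal{L}_{X_t}\omega_t=d\,\iota_{X_t}\omega_t$ reduces everything to $\iota_{X_t}\omega_t=-\nu$; integrating the resulting time-dependent field $X_t$ yields an isotopy $\psi_t$ with $\psi_1^*\omega=\omega_0$, and $\psi_1$ combined with the coordinates of the previous steps gives the claimed folded-Darboux chart.

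I expect the genuine obstacle to be solving $\iota_{X_t}\omega_t=-\nu$ \emph{smoothly across} $Z$: the forms $\omega_t$ are degenerate exactly on $Z$, so they cannot be inverted there, and a careless solution would blow up like $1/y_1$. The point I would verify carefully is that the components of $\nu$ pairing against the kernel codirections each carry a compensating factor of $y_1$ — this is where the first-order normalization of the second step and the transversality hypothesis are essential — so that $X_t$ extends smoothly over $Z$ and is tangent to $Z$ there, keeping $\psi_t$ a well-defined isotopy preserving $Z$. An alternative I would keep in reserve is the \emph{unfolding} viewpoint: because $y_1\,dx_1\wedge dy_1=dx_1\wedge d(y_1^2/2)$, the model $\omega_0$ is the pullback of the standard symplectic form under the fold map $y_1\mapsto y_1^2/2$, so one could instead produce a folding map onto a genuine symplectic germ and invoke the ordinary Darboux theorem — at the cost of constructing that map, which is essentially equivalent to the normalization above.
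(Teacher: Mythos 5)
First, a point of reference: the paper does not prove this statement at all --- it is recalled as a preliminary from Martinet \cite{mart} --- so there is no in-paper argument to compare against. Your architecture (pointwise linear algebra at $p$, presymplectic Darboux for $i^*\omega$ on $Z$, normalization of the transverse coefficient, then a relative Moser path) is the standard route to this normal form, and belongs to the same path-method family the paper itself invokes for the $b^m$-Darboux theorem; your Step 1 is correct as stated, and Step 2 is the right plan.

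The genuine gap sits exactly at the point you flag, and as written your mechanism for closing it fails. The relative Poincar\'e lemma you invoke produces a primitive $\nu$ of $\omega-\omega_0$ with $i^*\nu=0$ only; this is strictly weaker than ``$\nu$ vanishing on $Z$'' (your ``i.e.'' conflates the two). In your coordinates, $i^*\nu=0$ forces the $dx_1$-, $dx_i$-, $dy_i$-components of $\nu$ to be divisible by $y_1$, but says nothing about the $dy_1$-component; and it is precisely that component which, fed through the singular block of $\omega_t^{-1}$ (the inverse is smooth except on the two-dimensional kernel block, where it blows up like $1/y_1$), produces a $1/y_1$ singularity in the $\partial_{x_1}$-component of $X_t$. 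Two standard repairs: (i) strengthen Step 2 so that $\omega$ and $\omega_0$ agree \emph{pointwise} along $Z$ as forms on $T_qM$, not merely after pullback --- this requires both the shear killing the $dy_1\wedge(\text{tangential})$ terms of $\omega$ along $Z$ and the rescaling making the $dx_1\wedge dy_1$-coefficient exactly $y_1$, i.e.\ the parametric version of the $n=1$ computation $v=\sqrt{2\int_0^{y_1}s\,h(x_1,s)\,ds}$, which you assert but never construct, and which is also what guarantees every $\omega_t$ in the path stays folded; the homotopy-formula primitive then vanishes pointwise on $Z$, all components divisible by $y_1$; or (ii) keep your $\nu$ and gauge it, replacing $\nu$ by $\nu-dg$ with $g=\int_0^{y_1}\nu_{y_1}\,ds$, which kills the $dy_1$-component while preserving the divisibility of the others. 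Finally, a smaller inaccuracy: you should not try to make $X_t$ tangent to $Z$ --- even after the repair its $\partial_{y_1}$-component along $Z$ is generically nonzero --- and you do not need to: since $\psi_1^*\omega=\omega_0$ and the fold locus is intrinsic to a folded form, $\psi_1$ automatically identifies $\{y_1=0\}$ with $Z$, and one recenters at $p$ afterwards using the translations of the model along its fold.
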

    
\subsection{Relation between $b^m$-symplectic, symplectic and folded symplectic manifolds} 
    
    \phantom{a}

    To relate singular symplectic manifolds to either symplectic or folded symplectic ones, we recall the desingularization theorem first formulated in \cite{GMW}. Observe that the behaviour of desingularization depends on the degree of the singularity.
    
    \begin{theorem} \label{th:deblog}
        { Let $\omega$ be a} $b^m$-symplectic structure  on a compact manifold $M$  and let $Z$ be its critical hypersurface.
        \begin{itemize}
            \item If $m=2k$ is \textbf{even}, there exists  a family of \textbf{symplectic} forms ${\omega_{\epsilon}}$ which coincide with  the $b^{m}$-symplectic form
            $\omega$ outside an $\epsilon$-neighborhood of $Z$ and for which  the family of bivector fields $(\omega_{\epsilon})^{-1}$ converges in
            the $C^{2k-1}$-topology to the Poisson structure $\omega^{-1}$ as $\epsilon\to 0$ .
            
            \item If $m$ is \textbf{odd}, there exists  a family of \textbf{folded symplectic} forms ${\omega_{\epsilon}}$ which coincide with  the $b^{m}$-symplectic form
            $\omega$ outside an $\epsilon$-neighborhood of $Z$.
        \end{itemize}
    \end{theorem}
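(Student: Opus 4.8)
The plan is to work \emph{semilocally} near the critical hypersurface $Z$ and to replace the singular radial part of $\omega$ by a smooth, $\epsilon$-dependent profile that is frozen outside an $\epsilon$-tube around $Z$. Away from $Z$ the form is already an honest symplectic form, so I would only ever modify $\omega$ inside a tubular neighbourhood $U \cong Z \times (-\epsilon_0,\epsilon_0)$, leaving it untouched for $|f| \ge \epsilon$; this immediately delivers the ``coincides with $\omega$ outside an $\epsilon$-neighbourhood'' clause in both cases. On $U$ I would use the Laurent-type normal form \eqref{eqn:newlaurent},
\[
\omega = \sum_{j=1}^{m} \frac{df}{f^{j}} \wedge \pi^{*}\alpha_{j} + \beta,
\]
and recall from the non-degeneracy discussion that $\alpha_m$ is nowhere vanishing on $Z$ and that $\beta|_{Z}$ has maximal rank; these are the two facts that drive the whole argument.

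The construction is to smooth the coefficients: replace each singular power $f^{-j}$ by a smooth function $g_{j,\epsilon}(f)$ with $g_{j,\epsilon}(f)=f^{-j}$ for $|f|\ge\epsilon$, and set
\[
\omega_{\epsilon} = \sum_{j=1}^{m} g_{j,\epsilon}(f)\, df \wedge \pi^{*}\alpha_{j} + \beta .
\]
Since each $\alpha_j$ is closed, $g_{j,\epsilon}(f)\,df\wedge\pi^{*}\alpha_j = d\big(G_{j,\epsilon}(f)\,\pi^{*}\alpha_j\big)$ with $G_{j,\epsilon}'=g_{j,\epsilon}$, so $\omega_{\epsilon}$ is automatically closed. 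The whole question of symplectic versus folded then reduces to the sign behaviour of the dominant coefficient $g_{m,\epsilon}$, which near $Z$ controls $\omega_{\epsilon}^{\,n}$ through the wedge with the nowhere-zero $\pi^{*}\alpha_m$ and the leafwise non-degenerate $\beta$. Here the \textbf{parity of $m$ is decisive}: for $|f|\ge\epsilon$ the function $f^{-m}$ has constant sign when $m=2k$ is even, so $g_{m,\epsilon}$ can be interpolated to stay strictly positive, keeping $\omega_{\epsilon}$ non-degenerate and hence \textbf{symplectic}; when $m$ is odd, $f^{-m}$ has opposite signs on the two sides of $Z$, so every smooth interpolant must cross zero, and I would arrange a single \emph{transverse} zero along $Z$, producing $g_{m,\epsilon}(f)\sim c\,f$. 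Matching this against the folded Darboux model $y_1\,dx_1\wedge dy_1 + \sum_{i\ge 2}dx_i\wedge dy_i$ shows $\omega_{\epsilon}$ is \textbf{folded symplectic} with folding hypersurface exactly $Z$.

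For the convergence statement in the even case I would take the interpolation \emph{self-similar}, $g_{m,\epsilon}(f)=\epsilon^{-2k}G(f/\epsilon)$ for a fixed positive profile $G$ with $G(s)=s^{-2k}$ for $|s|\ge 1$. Since $\omega_{\epsilon}^{-1}$ agrees with $\omega^{-1}$ for $|f|\ge\epsilon$, it suffices to estimate inside the tube, where the relevant dual coefficient is $1/g_{m,\epsilon}(f)=\epsilon^{2k}H(f/\epsilon)$ with $H=1/G$. Differentiating, $\partial_f^{\ell}\big(\epsilon^{2k}H(f/\epsilon)\big)=\epsilon^{2k-\ell}H^{(\ell)}(f/\epsilon)$, which tends to $0$ uniformly for every $\ell\le 2k-1$ but is only $O(1)$ for $\ell=2k$. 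This is exactly why one obtains $C^{2k-1}$-convergence of $(\omega_{\epsilon})^{-1}$ to $\omega^{-1}$ and no better, recovering the sharp exponent in the statement; the subleading terms $g_{j,\epsilon}$ with $j<m$ carry milder singularities and can be smoothed on the same scale without affecting either non-degeneracy or the convergence order.

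The step I expect to be the \textbf{main obstacle} is controlling all the Laurent terms simultaneously while keeping the argument global: \eqref{eqn:newlaurent} is only a semilocal normal form, so I would first patch the tubular-neighbourhood construction to $\omega$ via a partition of unity subordinate to $U$ and its complement, and then verify that smoothing the subleading coefficients does not spoil the sign of $\omega_{\epsilon}^{\,n}$ away from the dominant term. Establishing that the leading term genuinely dominates the non-degeneracy computation \emph{uniformly in $\epsilon$} — so that $\omega_{\epsilon}$ is symplectic (resp.\ folded) on the entire tube, not merely infinitesimally along $Z$ — is the technical heart of the proof; closedness and the ``agrees outside the tube'' property are essentially immediate from the construction.
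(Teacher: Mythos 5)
Your construction is essentially the paper's own: Theorem \ref{th:deblog} is not proved independently in this article but recalled from \cite{GMW}, and the construction behind it is reproduced in Section \ref{sec:DesingSlice} — an $\epsilon$-self-similar smooth profile replacing the singular radial part inside an $\epsilon$-tube, frozen outside it, with the parity of $m$ deciding symplectic versus folded (odd, strictly increasing $f$ for $m=2k$; even $f$ with a transverse critical point on $Z$ for $m$ odd), and the scaling $f_\varepsilon(x)=\varepsilon^{-(2k-1)}f(x/\varepsilon)$ accounting for exactly $C^{2k-1}$-convergence of the dual bivectors. The one structural difference is that the paper, following \cite{GMW}, keeps the polynomial factor $\sum_i x^i\pi^*\alpha_i$ of \eqref{eq:b2kDecomp} intact and replaces only the single exact singular one-form $\frac{dx}{x^m}=d\bigl(\tfrac{-1}{(m-1)x^{m-1}}\bigr)$ by $df_\varepsilon$, whereas you smooth each Laurent coefficient $f^{-j}$ by its own $g_{j,\epsilon}$. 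Your variant is equivalent provided all $g_{j,\epsilon}$ are taken self-similar on the same scale, but the single-function version is cleaner: closedness, non-degeneracy and the convergence estimate all reduce to properties of one profile.

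Three points in your write-up need repair. First, in the even case strict positivity of $g_{m,\epsilon}$ does \emph{not} by itself give non-degeneracy: inside the tube
\begin{equation*}
\omega_\epsilon^{\,n} \;=\; n\Bigl(\sum_j g_{j,\epsilon}(f)\,df\wedge\pi^*\alpha_j\Bigr)\wedge\beta^{n-1} \;+\; \beta^n,
\end{equation*}
and neither $\beta^n$ nor the $j<m$ terms have a definite sign, so a positive but bounded $g_{m,\epsilon}$ could be cancelled. What saves you is precisely the self-similar choice: $g_{m,\epsilon}\ge c\,\epsilon^{-2k}$ on the tube, so the leading term dominates all others uniformly as $\epsilon\to 0$. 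This is the dominance you defer to the end as the \lq\lq technical heart"; with the scaling fixed it is a one-line estimate, not an open obstacle, and you should carry it out rather than flag it. Second, no partition of unity is needed or wanted: since $g_{j,\epsilon}(f)=f^{-j}$ for $|f|\ge\epsilon$, your $\omega_\epsilon$ literally equals $\omega$ near the boundary of the tube and extends by $\omega$ globally; multiplying closed forms by cutoffs, as a partition-of-unity patching would do, destroys closedness. Third, in the odd case the folding hypersurface is in general \emph{not} exactly $Z$: on $Z$ the first term above vanishes, so $\omega_\epsilon^{\,n}|_Z=\beta^n|_Z$, which need not be zero (the decomposition \eqref{eqn:newlaurent} only forces the pullback $i_Z^*\beta$ to have rank $2n-2$, not $\beta^n=0$ along $Z$). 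The fold is therefore a nearby graph over $Z$ inside the tube; this costs nothing for the theorem as stated, but identifying the fold with $Z$ itself would require first normalizing $\beta$ to be a pullback from $Z$.
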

    
    An immediate consequence of this theorem is that a $b^{2k}$-symplectic manifold also admits a symplectic structure.

    Following \cite{GMW}, we will look at the desingularization process in more detail and explicitly write the desingularizing function for even degree as we use it to construct a vital example later on in this paper.
    
    Recall that one can write a Laurent series of a closed $b^m$-form in a tubular neighbourhood $U$ of $Z$:
    \begin{equation}
    \label{eq:Laurent}
        \omega = \frac{dx_1}{x_1^m} \wedge \left ( \sum \limits_{i = 0}^{m - 1} \pi^* (\alpha_i) x^i \right ) + \beta,
    \end{equation}
    where $\pi : U \to Z$ is the projection of the tubular neighborhood onto $Z$, $\alpha_i$ is a closed smooth de Rham form on $Z$, and $\beta$ is a de Rham form on $M$.
    
    Due to formula \ref{eq:Laurent}, the $b^{2k}$-form can be written as
    \begin{equation}
        \label{eq:b2kDecomp}
        \omega = \frac{d x_1}{x_1^{2k}} \wedge \sum \limits_{i = 0}^{2k - 1} \left ( x^i \alpha_i \right ) + \beta
    \end{equation}
    on a tubular $\varepsilon$-neighbourhood of a given connected component of $Z$.
    
    More details, including the desingularization function (together with its explicit form), will be provided further in Section \ref{sec:DesingSlice}.

\subsection{$b^m$-Symplectic group actions and moment maps}\label{sec:zigzag}

   In order to describe the group actions on singular symplectic manifolds and the corresponding moment maps, we recall the results of two papers: \cite{BKM} and \cite{guillemin2015toric} (check the preprint version for completeness {\em arXiv:1309.1897v1}).

    As a Poisson manifold, $b^m$-symplectic manifolds admit an induced symplectic foliation. The connected components of $M \setminus Z$ are open symplectic leaves of dimension $2n$, and the critical hypersurface $Z$ admits a co-rank $1$ Poisson (cosymplectic) structure.
    
    The first result in \cite{BKM} characterizes groups which acts on a $b$-symplectic manifolds in a non-trivial manner. These are called transverse in \cite{BKM} when the group action acts transversally to the fibers of the mapping torus in $Z$. Such a notion can be extended to the $b^m$-setting.
    
    \begin{theorem}[Braddell, Kiesenhofer, Miranda] \label{th:bmAction}
        Let $G$ be a compact Lie group acting on a compact $b$-symplectic manifold in a transverse way. Then $G$ is either of the form $S^1 \times H$ or $S^1 \times H \phantom{a} mod \phantom{a} \Gamma$, where $\Gamma = \mathbb Z_l \times \mathbb Z_k$ and $\mathbb Z_k$ is a non-trivial cyclic subgroup of $H$.
    \end{theorem}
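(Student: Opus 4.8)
The plan is to extract a canonical central circle directly from the $b$-symplectic data and then feed it into the structure theory of compact Lie groups. First I would note that, since $\omega$ is $G$-invariant and the critical hypersurface $Z$ is intrinsically determined (it is the locus where the dual Poisson bivector drops rank), the $G$-action preserves $Z$. Averaging an invariant tubular neighbourhood over $G$ and applying the equivariant $b^m$-Moser theorem (Theorem \ref{theorem:bmmoser}) in the case $m=1$, I would normalize $\omega$ near $Z$ to the form $\frac{df}{f}\wedge \pi^*\alpha + \beta$ with $G$ acting so as to preserve $f$, $\alpha$ and $\beta$ simultaneously. Consequently $G$ preserves the induced corank-one (cosymplectic) structure on $Z$, where $\alpha$ is closed and nowhere vanishing; in particular $G$ preserves its canonical modular (Reeb) vector field $v_{mod}$, the unique field with $\alpha(v_{mod})=1$ lying in the kernel of $\beta|_Z$.

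Next I would produce the circle. Because $Z$ is compact and its symplectic foliation carries a compact leaf, the mapping-torus description (Definition \ref{mappingtorusstructure}) shows the flow of $v_{mod}$ is periodic with modular period $c$, hence defines a free $S^1$-action on $Z$ which extends to a neighbourhood of $Z$ by $b$-symplectomorphisms; since $v_{mod}$ is a Poisson vector field, this flow preserves $\omega$. Choosing $v_{mod}$ to be $G$-invariant (again by averaging) makes this \emph{modular circle} $S^1_{mod}$ commute with the whole $G$-action. The essential step is then to realize $S^1_{mod}$ \emph{inside} $G$ as a central subgroup: using the $G$-action on the leaf space $S^1 = Z/\mathcal{F}$ together with the invariance and nowhere-vanishing of $\alpha$, one identifies the generator of $S^1_{mod}$ with a fundamental vector field $X_\xi$, $\xi\in\mathfrak g$, and the $G$-invariance of $v_{mod}$ forces $[\xi,\mathfrak g]=0$, so that $\xi$ spans a central circle $S^1\subseteq Z(G)$.

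Finally I would invoke structure theory. A compact connected Lie group carrying a central circle splits as $G = Z(G)_0\cdot[G,G]$ with $Z(G)_0$ a central torus containing our $S^1$ and $[G,G]$ compact semisimple; grouping the remaining central directions with $[G,G]$ into a subgroup $H$ makes the multiplication map $S^1\times H\to G$ a surjective homomorphism with finite central kernel $\Gamma$. Projecting $\Gamma$ to $S^1$ gives a finite cyclic group $\mathbb{Z}_l$, while its kernel $\Gamma\cap(1\times H)$ is a cyclic central subgroup $\mathbb{Z}_k\subseteq H$, yielding $G\cong S^1\times H$ when $\Gamma$ is trivial and $G\cong (S^1\times H)/\Gamma$ with $\Gamma=\mathbb{Z}_l\times\mathbb{Z}_k$ otherwise. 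The hard part will be the centrality claim in the middle paragraph: showing that the modular direction is not merely an external symmetry commuting with $G$ but is actually generated by a central element of $\mathfrak g$. This is exactly where the $b$-symplectic normal form near $Z$ and the non-vanishing of $\alpha$ must be used to pin the modular vector field down inside the image of $\mathfrak g$, and it is the only point where the singular geometry, rather than pure Lie-theoretic bookkeeping, enters the argument.
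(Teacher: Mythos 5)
This theorem is not proved in the paper at all: it is recalled from \cite{BKM}, and the hypothesis suppressed in the phrasing above (but explicit in \cite{BKM} and in the paper's own later discussion, where it says that \emph{a group which acts transversally to the symplectic foliation inside $Z$} decomposes as $S^1\times H$ mod a discrete group) is that the action is transverse to the symplectic foliation of $Z$, equivalently that the modular weight is nonzero. Your proposal never invokes any such hypothesis, and without it the conclusion is false, so no argument along your lines can close. Concretely, take $M = S^2 \times \mathbb{T}^2$ with the product $b$-symplectic form $\omega_{S^2} \oplus \frac{d\theta_1}{\sin\theta_1}\wedge d\theta_2$, and let $SO(3)$ act by rotations on the $S^2$ factor and trivially on $\mathbb{T}^2$. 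This is a compact group acting $b$-symplectically and effectively on a compact $b$-symplectic manifold, but its orbits are tangent to the symplectic leaves of $Z$, and $SO(3)$ is not of the form $S^1\times H$ or $(S^1\times H)/\Gamma$: any such quotient contains the image of $S^1\times\{e\}$ as a nontrivial central circle, whereas $SO(3)$ has trivial center. Thus the step you yourself flag as ``the hard part'' --- realizing the modular circle as a central subgroup of $G$ --- is not merely hard; it is precisely where the missing transversality hypothesis must enter, and it cannot be carried out in the generality you set up.

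Two further steps fail even granting transversality. First, Definition \ref{mappingtorusstructure} does not make the modular flow periodic: the time-$c$ flow is the return map $\phi$ of the mapping torus, which may have infinite order (e.g.\ an irrational translation on a torus leaf), so in general there is no ``modular circle'' $S^1_{mod}$ acting on $Z$; in \cite{BKM} the (finite-cover) product structure $\tilde Z = S^1\times\mathcal{L}$ is deduced \emph{from} the transverse group action --- this is the Braddell--Kiesenhofer--Miranda theorem quoted just before Theorem \ref{th:bmHsl} --- not from the intrinsic geometry of $Z$. Second, a circle action commuting with the $G$-action is an external symmetry: $G$-invariance of $v_{mod}$ gives $[v_{mod}, X_\xi]=0$ for all $\xi\in\mathfrak{g}$, not $v_{mod}=X_\xi$ for some $\xi$, so nothing places $S^1_{mod}$ inside $G$, let alone in its center. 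The actual argument runs in the opposite direction to yours: transversality guarantees that some fundamental vector field has nonzero modular weight, and since the modular weights $\xi\mapsto\alpha(X_\xi)$ are constants defining a character of $\mathfrak{g}$ that kills $[\mathfrak{g},\mathfrak{g}]$ (equivalently, since $G$ then acts nontrivially on the base circle of the Tischler fibration of $Z$, producing a surjection $G\to S^1$ with kernel $H$), the circle is extracted from the group action rather than the geometric circle being inserted into the group. Your final Lie-theoretic paragraph --- splitting $G = Z(G)_0\cdot[G,G]$ and analyzing the finite kernel of $S^1\times H\to G$ --- is standard and essentially correct; the gap lies entirely in the geometric input feeding it.
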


    Among the class of Poisson action, the one of $b^m$-Hamiltonian actions plays a central role,
    
    \begin{definition} \label{def:bmHamMM}
        The action of $G$ on a $b^m$-symplectic manifold $(M, Z, \omega)$ is called $b^m$-Hamiltonian if there exists a moment map $\mu \in ^{b^m}\mathcal{C}^\infty(M) \otimes \mathfrak{g}^*$ with
        $$
            \iota(\upsilon_\xi) \omega = \left < d \mu, \xi\right >
        $$
        where $\upsilon_\xi$ is the fundamental vector field generated by $\xi$ and the set of $b^m$-functions is $^{b^m} \mathcal{C}^\infty(M) = \left ( \bigoplus \limits_{i = 1}^{m - 1} t^{-i} \mathcal{C}^{\infty}(t) \right ) \oplus ^{b}\mathcal{C}^\infty(M)$ and $^{b}\mathcal{C}^\infty (M) = \{a \log |t| + g, g \in \mathcal{C}^\infty (M)\}$.
    \end{definition}
        
    In other words, the action is $b^m$-Hamiltonian if it preserves the $b^m$-symplectic form and $\iota(\upsilon_\xi) \omega$ is exact.

    \begin{picture}(300, 100)
        \put(300, 50){\vector(-1, 0){200}}
        \put(120, 0){\line(0, 1){100}}
        \put(120, 50){\vector(0, 1){30}}
        \put(150, 0){\line(0, 1){100}}
        \put(150, 50){\vector(0, 1){30}}
        \put(180, 0){\line(0, 1){100}}
        \put(180, 50){\vector(0, 1){30}}
        \put(210, 0){\line(0, 1){100}}
        \put(210, 50){\vector(0, 1){30}}
        \put(240, 0){\line(0, 1){100}}
        \put(240, 50){\vector(0, 1){30}}
        \put(270, 0){\line(0, 1){100}}
        \put(270, 50){\vector(0, 1){30}}
        \put(240, 50){\circle*{3}}
        \put(242, 40){$\mathcal O_p$}
        \put(160, 20){\circle*{3}}
        \put(200, 80){\circle*{3}}
        \qbezier[50](160, 20)(160, 65)(200, 80)
    \end{picture}{}
    
    For simplicity, let us consider the case of surfaces. Outside the critical hypersurface, locally, the image of the $b$-moment map is just $\mathbb R$, and on the hypersurface, it blows up. 
    
     To explore the difference between Hamiltonian and $b$-Hamiltonian actions and the corresponding moment maps, let us consider the following example:
    \begin{itemize} \label{ex:SpT}
         \item[1.] A Hamiltonian $S^1$-action on the sphere by rotation around the vertical axis with $\R$ being the image of the moment map. (Fig. \ref{pic:s2})
        
        \item[2.] A $b$-Hamiltonian $S^1$-action on the torus by the same rotation with $S^1$ being the image of $b$-moment map. (Fig. \ref{pic:t2})
    \end{itemize}
    
    \begin{figure}[h!] 
        \centering
        \begin{tikzpicture}
            \draw (-6, 0) circle (1.5);
            \draw[dashed] (-6, 0) ellipse (1.5 and 0.5);
            \draw[->] (-4, 0) -- (-2, 0);
            \draw (-1, 1.5) -- (-1, -1.5);
            
            \end{tikzpicture}
        \caption{Moment map for circle action on $S^2$}
        \label{pic:s2}
    \end{figure}
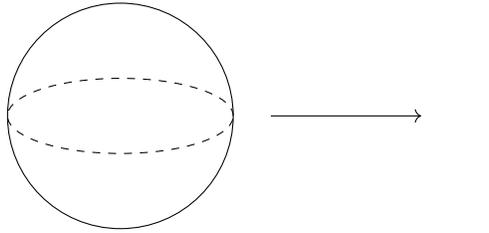
    
    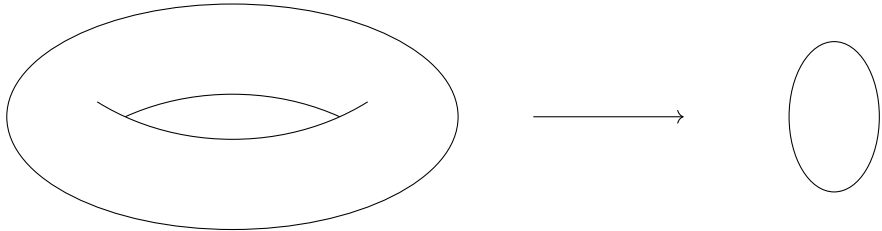
\begin{figure}[h!] 
        \centering
        \begin{tikzpicture}
            \useasboundingbox (-9,-1.5) rectangle (3,1.5);
                \draw (-6,0) ellipse (3 and 1.5);
                \begin{scope}
                    \clip (-6,-1.8) ellipse (3 and 2.5);
                    \draw (-6,2.2) ellipse (3 and 2.5);
                \end{scope}
                \begin{scope}
                    \clip (-6,2.2) ellipse (3 and 2.5);
                    \draw (-6,-2.2) ellipse (3 and 2.5);
                \end{scope}
                
            \draw[->] (-2, 0) -- (0, 0);
            
            \draw (2, 0) ellipse (0.6 and 1);
        \end{tikzpicture}
        \caption{Moment map for circle action on $T^2$}
        \label{pic:t2}
    \end{figure}
    
    As explained above, the moment map, in this case, contains a $log$-component that explodes at $h = 0$. To work with this object, one needs to introduce the notion of $b$-line and $b$-circle.

    To prescribe a smooth structure on the image of the moment map, in \cite{guillemin2015toric} the authors use $\mathbb{R}_{>0}$-valued labels ("weights") on the points at infinity. Thus, the image of the moment map forms what is called a $b$-line (or $b$-circle) constructed by gluing copies of the extended real line $\overline{\mathbb{R}} := \mathbb{R} \cup \{\pm \infty\}$ together in a zig-zag pattern where points are \emph{at infinity} are glued together.
    \begin{figure}[ht]
        \centering
        \begin{tikzpicture}[scale = 0.8]
        \foreach \copies in {0, 1, 2}{ \pgfmathtruncatemacro{\lowerlabel}{(\copies * 2) - 2}	 \pgfmathtruncatemacro{\upperlabel}{(\copies * 2) - 1}	
        \foreach \paramt in {-3, -2.75, ..., 0.01}	
	    {
	    \pgfmathsetmacro{\height}{1.5 * exp(\paramt)}	
	    \pgfmathsetmacro{\basept}{\copies * (8/3)}	
	    \pgfmathsetmacro{\xshift}{0.25*(\height * (\height - 3)) - 0.1}
	    \draw[fill = blue] (\basept + \xshift, \height - 1.5) circle(.1mm);
	    \draw[fill = blue] (\basept - \xshift, \height - 1.5) circle(.1mm);
	    \draw[red, fill = red] (\basept, -1.6) circle(.5mm) node[below right, blue] {\small{$\textrm{wt}(\lowerlabel)$}};
	
	    \draw[fill = blue] (\basept + 1.33 + \xshift, 1.5 - \height) circle(.1mm);
	    \draw[fill = blue] (\basept + 1.33 - \xshift, 1.5 - \height) circle(.1mm);
	    \draw[red, fill = red] (\basept + 1.33, 1.6) circle(.5mm) node[above right, blue] {\small{$\textrm{wt}(\upperlabel)$}};
	    }}
        \draw (7.5, 0) node[right] {\huge{$\dots$}}	;
        \draw (-0.8, 0) node[left] {\huge{$\dots$}}	;

        \pgfmathsetmacro{\rline}{11}	

        \draw (9, 0) edge node[above] {$\hat{x}$} (10.5, 0);
        \draw[->] (9, 0) -- (10.5, 0);

        \foreach \paramt in {-3, -2.75, ..., 0.01}	
	    {
	    \pgfmathsetmacro{\height}{1.5 * exp(\paramt)}	
	    \pgfmathsetmacro{\xshift}{0.25*(\height * (\height - 3)) - 0.1}
	    \draw[fill = blue] (\rline, \height - 1.5) circle(.1mm);
	    \draw[fill = blue] (\rline, 1.5 - \height) circle(.1mm);
	    }

        \draw (\rline, 1.5) node[right] {$\mathbb{R}$};
        \end{tikzpicture}
    \caption{A weighted $b$-line with $I = \mathbb{Z}$.}
    \label{fig:bline}
    \end{figure}
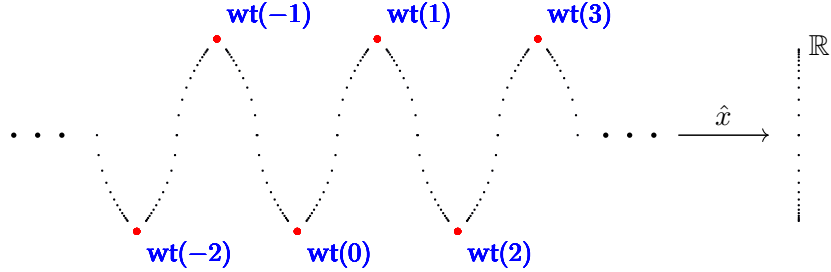

    This notion can be generalized using the language of $b$-Lie groups as introduced in \cite{BKM2}.

    \begin{definition}
        A $b$-manifold $(G,H)$, where $G$ is a Lie group and  $H \subset G$ is a closed co-dimension one subgroup\footnote{This is equivalent to $H$ being an embedded Lie subgroup.} is called a {\bf $b$-Lie group}.
    \end{definition}

   The $b$-line and the $b$-circle are themselves examples of $b$-Lie groups, with the action given by translations and rotations, respectively.

    In this article, we consider the case of general $b^m$-symplectic actions. In view of Theorem \ref{th:bmAction} for $b$-manifolds a group which acts transversally to the symplectic foliation inside $Z$ decomposes as $G=S^1\times H$ mod by a discrete group. As the only constraint is the geometric structure induced on the critical set $Z$ the same holds for $b^m$-symplectic manifolds. 
    
    By considering the restriction of $\rho$ to the $S^1$-component $\rho|_{S^1}$, we obtain a torus action on the $b^m$-symplectic manifold. 
    
    We follow \cite{GMWconvexity} and \cite{GMWgeomqbm} where the notion of modular weights of a torus action is defined and studied. 
    
    In a neighborhood $U$ of the critical set $Z$, $U=Z\times (-\epsilon, \epsilon)$,  write $\omega$ as in equation \ref{eqn:newlaurent}:

    $$
        {\omega = \sum_{j = 1}^{m}\frac{df}{f^j} \wedge \pi^*(\alpha_{j}) +  \beta }.
    $$

    Assume  that there exists a moment map $\mu\in ~^{b^m}\mathcal{C}^\infty(M)\otimes \mathfrak{t}$ with
    $$
        \langle d\mu, \xi \rangle= i_{\xi^M}\omega
    $$

    \noindent for any $\xi\in\mathfrak{t}$ and where $\xi^M$ stands for the fundamental vector field generated by $\xi$;

    \begin{definition} 
        The  modular weights $a_1, \dots, a_m\in \mathfrak{t}^*$ in each connected component of $Z$ are given by
        $$ 
            a_j(\xi)=\alpha_j(\xi^M).
        $$
    \end{definition}
    \noindent In \cite{GMWconvexity} it is shown that these are constants.

    \begin{definition} 
        The modular weights of $\rho$ are the modular weights of the induced $S^1$-action $\rho|_{S^1}$.
    \end{definition}
    \textbf{Assumption:}
    Most of the results in this article are proved under the assumption that \emph{the highest modular weight} $a_m$ is nonzero.
    
    As in \cite{GMWconvexity} it is sufficient to assume that the highest modular weight $a_m$ is nonzero at least for one connected component of $Z$. 

    By construction, $b^m$-Hamiltonian actions with  highest non-vanishing modular weight fulfill the transversality condition in \cite{BKM}.

\section{Motivating example: Atiyah-Bott space of flat connections on $b$-symplectic surfaces} \label{Sec:AB}

    One of the motivating examples for this work comes from symplectic structures on the moduli space of flat connections $\mathcal{M}^G$. There are different approaches on how to introduce symplectic structure on $\mathcal{M}^G$ (Narasimhan and Seshadri \cite{NarSes} and \cite{Ses}, Goldman \cite{Gold1} and \cite{Gold2} and Atiyah-Bott \cite{AB}). Some of these approaches can be generalized from closed oriented Riemann surfaces $\Sigma$ of genus $g$ to surfaces with boundary. However, this kind of singularities on the surface does not lead to the singularities in the symplectic structure. The question that captured our attention is if and under which condition can the Poisson structure on the moduli space of flat connections become $b$- or $b^m$-symplectic. 
    
    In this section, we follow the approach of Atiyah and Bott. As a result, we arrive to a family of examples of $b^m$-symplectic structures on the moduli space of flat connections in trivial bundles over manifolds with boundary.
    
\subsection{Flat connections over compact orientable surfaces}
    
    In order to construct an example of the singular Atiyah-Bott form, we first turn to the usual symplectic case. We remind the approach from \cite{AM} and \cite{DM} on how to explicitly write the Atiyah-Bott form on a compact oriented manifold in Darboux coordinates via holonomies corresponding to the generators of the fundamental group. The main idea underlying this approach is that the integral $\int_\Sigma \alpha \wedge \beta$ taken over the whole surface can be lifted to the universal cover and becomes equal to the integral over the fundamental polygon. The lifted integral itself can be localized to the alternating sum of the values taken in the vertexes of the polygon, that is, the sum of the holonomies corresponding to the edges of the fundamental polygon (i.e., generators of the fundamental group).

    Let $\Sigma$ be a compact orientable surface, and $G$ be a Lie group admitting $\Ad$-invariant bilinear form on its Lie algebra. Let $\Sigma \times G$ be a trivial bundle. Now we can introduce a theorem relating generators of the fundamental group of $\Sigma$ and corresponding holonomies with the Atiyah-Bott symplectic structure as Darboux coordinates.
    
    \begin{theorem} \label{th:ABDarboux}
        Suppose $\Sigma$ is a compact orientable surface of genus $g \geq 1$, and suppose $G$ is an abelian Lie group with a fixed symmetric non-degenerate bilinear form on its Lie algebra. Then the moduli space $\mathcal M(\Sigma,G)$ is diffeomorphic to $G^{2g}$.
        
        Write $a_1, b_1, \ldots, a_g, b_g$ for the $2g$ projections to the copies of $G$ composed with the inversion map $G \to G$ so that by abuse of notation $a_i \in G$ is the holonomy around the generator $a_i$ of the fundamental group (and analogously for $b_i$).Then $\theta \circ (d a_i): d \text{Hom}(\pi_1(\Sigma), G) \to \mathfrak{g}$ and $\theta \circ (d b_i) : d \text{Hom}(\pi_1(\Sigma), G) \to \mathfrak{g}$ are $\mathfrak{g}$-valued 1-forms on $\text{Hom}(\pi_1(\Sigma), G)$. Call these 1-form $d a_i$ and $d b_i$. Then the symplectic structure on the block $M_{\Sigma \times G} (\Sigma, G)$ of the moduli space is given by
        $$
            \omega _{AB} = \sum \limits_{i = 1}^g (d b_i \wedge d a_i)
        $$
    \end{theorem}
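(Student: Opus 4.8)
The plan is to descend the Atiyah--Bott form from the infinite-dimensional space of connections and then evaluate the resulting integral by passing to the fundamental polygon of $\Sigma$. First I would recall that on the affine space $\mathcal{A}$ of connections on $\Sigma \times G$ the Atiyah--Bott two-form is
$$\omega_{AB}(\eta_1, \eta_2) = \int_\Sigma \langle \eta_1 \wedge \eta_2\rangle, \qquad \eta_1, \eta_2 \in \Omega^1(\Sigma, \mathfrak{g}),$$
where $\langle \cdot, \cdot\rangle$ is the chosen $\Ad$-invariant pairing used to contract the Lie-algebra factors while $\wedge$ combines the form factors. This two-form is gauge invariant and closed, the curvature $A \mapsto F_A$ serves as a moment map for the gauge action, and the Marsden--Weinstein quotient at $F_A = 0$ is exactly the moduli space $\text{Hom}(\pi_1(\Sigma), G)/G$, whose tangent space at a flat connection $A$ (equivalently at $\rho$) is the twisted cohomology $H^1_{d_A}(\Sigma, \mathfrak{g})$. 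The entire content of the theorem is to rewrite this integral pairing in the holonomy coordinates $a_i = \rho(a_i)$, $b_i = \rho(b_i)$.

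Next I would cut $\Sigma$ open along the standard generators to obtain the fundamental $4g$-gon $P$ whose boundary word is $\prod_{i=1}^g a_i b_i a_i^{-1} b_i^{-1}$ and all of whose vertices are identified to the basepoint. Since $P$ is simply connected the flat connection trivializes on it, so a $d_A$-closed representative $\eta_k$ of a tangent vector pulls back to an exact $\mathfrak{g}$-valued one-form $df_k$ on $P$. Using $\Ad$-invariance of $\langle \cdot, \cdot \rangle$ one has $\langle df_1 \wedge df_2 \rangle = d\langle f_1, df_2\rangle$, and Stokes' theorem converts the surface integral into a boundary integral,
$$\int_\Sigma \langle \eta_1 \wedge \eta_2 \rangle = \int_{\partial P} \langle f_1, df_2 \rangle.$$
The boundary $\partial P$ consists of the $4g$ edges paired by the side identifications; on two identified edges the trivializing functions differ by the transition function, that is, by the holonomy along the corresponding generator acting through $\Ad$. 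I would substitute these gluing relations edge by edge and collect the terms.

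The heart of the argument --- and the step I expect to be the main obstacle --- is this boundary computation: after inserting the holonomy gluing relations, the line integrals along interior portions of paired edges cancel, and what survives localizes at the single vertex of $\Sigma$ to an alternating sum of the values $f_k$ evaluated against the logarithmic derivatives $\theta \circ da_i$ and $\theta \circ db_i$ of the holonomies. Carrying out this bookkeeping carefully --- tracking the orientation of each of the $4g$ edges, their order in the boundary word, and the noncommutative $\Ad$-twists introduced at each identification --- is where the genus-$g$ relation $\prod_i [a_i, b_i] = 1$ is used and where the signs must be controlled. I would organize the cancellation so that the contribution of the $i$-th handle reduces precisely to $\langle db_i \wedge da_i\rangle$ evaluated on $(\eta_1, \eta_2)$; summing over handles then yields $\omega_{AB} = \sum_{i=1}^g db_i \wedge da_i$. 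Finally I would check that this expression is independent of the trivialization and of the coboundary representatives, which is automatic since it is assembled from the Maurer--Cartan form and the $\Ad$-invariant pairing, confirming that it is well defined on the moduli block $M_{\Sigma \times G}(\Sigma, G)$.
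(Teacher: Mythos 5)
Your proposal is correct and follows essentially the same route the paper itself indicates: lift the Atiyah--Bott pairing $\int_\Sigma \langle \eta_1 \wedge \eta_2\rangle$ to the fundamental $4g$-gon, trivialize the flat connection on the simply connected polygon, apply Stokes' theorem, and localize the boundary integral at the identified vertex into an alternating sum expressed through the holonomy differentials $da_i$, $db_i$. Note that the paper does not carry out the edge-by-edge bookkeeping you flag as the main obstacle either --- it states the theorem as a recollection and defers that computation entirely to \cite{AM} and \cite{DM} --- so your outline matches the paper's treatment in both strategy and level of detail.
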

    
    \begin{example} \label{ex:torSimpl}
        Let us consider the torus $\mathbb T^2$ as an illustrating example. The corresponding fundamental polygon is a square with oriented edges (see Fig. \ref{fig:TSqSympl}) and the form $\omega_{AB} = db \wedge da$ as in theorem \ref{th:ABDarboux}.
        
        \phantom{a}
        \begin{figure}[h]
            \centering
            \begin{tikzpicture} 
                \draw (6,-1.5) -- node [vee][rotate = 90]{} (9,-1.5); 
                \draw (9,-1.5) --  node {\midarrowu}(9,1.5);
                \draw (6,1.5) -- node [vee][rotate = 90]{} (9,1.5);
                \draw (6,-1.5) -- node {\midarrowu}(6,1.5);
                
                \filldraw[black] (5.5,-0.5) circle (0pt) node[anchor=west] {a};
                \filldraw[black] (9.1,-0.5) circle (0pt) node[anchor=west] {a};
                \filldraw[black] (7,1.8) circle (0pt) node[anchor=west] {b};
                \filldraw[black] (7,-1.8) circle (0pt) node[anchor=west] {b};
            \end{tikzpicture}
            \caption{Fundamental polygon for $\mathbb{T}^2$}
            \label{fig:TSqSympl}
        \end{figure}
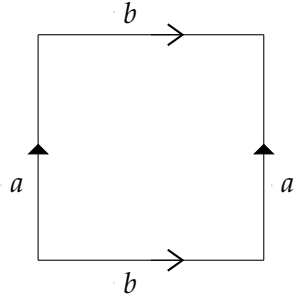
    \end{example}
    
    \subsection{The $b^m$-symplectic construction}
    
    Now we consider the singular analogue of the above example. In order to construct it, we use the desingularization Theorem \ref{th:deblog} and treat a $b^m$-symplectic structure (for even $m$) as a limit of a family of symplectic ones. 

    Take the limit of symplectic forms on $\mathbb T^2$ as in \ref{eq:b2kDesing}: 
    $$
        \lim_{\varepsilon \to 0} \omega_\varepsilon = d f_\varepsilon \wedge \left ( \sum \limits_{i = 1}^{2k} x^i \alpha_ i \right ) + \beta 
    $$
    and consider $f_\varepsilon$ being desingularizing function for a $b^2$-form $\frac{d \theta}{\sin^2(\theta/2)}$ having singularity along the hypersurface $Z = \{(\theta, \varphi)| \theta = 0\}$.
    In the case of $\mathbb{T}^2$, $k = 1$ and substitute the coordinates $(\theta, \varphi)$ into the expression, we get that $\beta = 0$ and
    $$
        \lim_{\varepsilon \to 0} \omega_\varepsilon = d f_\varepsilon \wedge  x \alpha + \beta = \frac{d \theta}{\sin^2 (\theta / 2)} \wedge d \varphi = \omega.
    $$
    
    The form on the right-hand side is a $b^2$-symplectic form on $(\mathbb T^2, S^1)$ where $S^1$ is the critical hypersurface $\alpha = 0$ corresponding to the $a-$cycle.
    
    \phantom{a}
    
    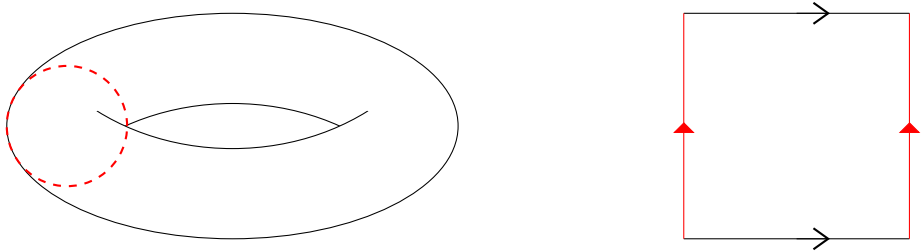
\begin{figure}[h]
        \centering
        \begin{tikzpicture}
            \useasboundingbox (-9,-1.5) rectangle (3,1.5);
            \draw (-6,0) ellipse (3 and 1.5);
            \begin{scope}
                \clip (-6,-1.8) ellipse (3 and 2.5);
                \draw (-6,2.2) ellipse (3 and 2.5);
            \end{scope}
            \begin{scope}
                \clip (-6,2.2) ellipse (3 and 2.5);
                \draw (-6,-2.2) ellipse (3 and 2.5);
            \end{scope}
            \draw[red,thick,dashed] (-8.2,0) circle (0.8cm);
        
            \draw (0,-1.5) -- node [vee][rotate = 90]{} (3,-1.5); 
            \draw[red] (3,-1.5) --  node {\midarrowu}(3,1.5);
            \draw (0,1.5) -- node [vee][rotate = 90]{} (3,1.5);
            \draw[red] (0,-1.5) -- node {\midarrowu}(0,1.5);
        \end{tikzpicture}  
        \caption{$b$-manifold $(\mathbb{T}^2, S^1)$ and the corresponding fundamental polygon}
        \label{fig:T2S1}
    \end{figure}
    
    \phantom{a}
    
    For the left-hand side of the equality, we can easily juxtapose the corresponding polygon and, therefore, the normal form of $\omega_{AB}$. Note that $\omega$ here is a form on the manifold $\Sigma$ and $\omega_{AB}$ is the form on the moduli space of flat connections $\mathcal M^G$. The family of polygons would still be squares as in example \ref{ex:torSimpl}. What changes here is the values of the holonomies (i.e., lengths of the edges), with one of them tending to infinity. In the limit, the corresponding fundamental polygon is as on Fig. \ref{fig:T2S1} where red edges correspond to the single distinguished circle $\alpha = 0$ that is the critical hypersurface for the form $\omega = \frac{d \alpha}{\alpha} \wedge d \beta$. The corresponding Atiyah-Bott form is then $\omega_{AB} = \frac{d b}{b}\wedge da$.
    
    This example is notable because the Darboux coordinates on the surface coincide with the generators of the fundamental group, which does not hold in general. 
    
    First, consider the following $b$-manifold: a torus $\mathbb T^2$ with one cycle $S^1$, chosen as a critical hypersurface. As shown in \cite{MP} such a $b$-manifold can only admit $b^{k}$-symplectic structures for even $k$. As an example, we choose $\omega = \frac{d \theta}{\sin^2 (\theta / 2)} \wedge d \varphi$. The polygon corresponding to this surface is a square (see Fig. \ref{fig:T2S1}b), where the red edges correspond to a single distinguished circle (as in Fig. \ref{fig:T2S1}a).
  
    Consider the following two  $b$-manifolds:
    \begin{itemize}
        \item $S^2$ with an equator $Z = S^1$ as an critical hypersurface
        \item $T^2$ with two copies of $S^1$ as an critical hypersurface
    \end{itemize}
    
    The corresponding polygons look as depicted below:
    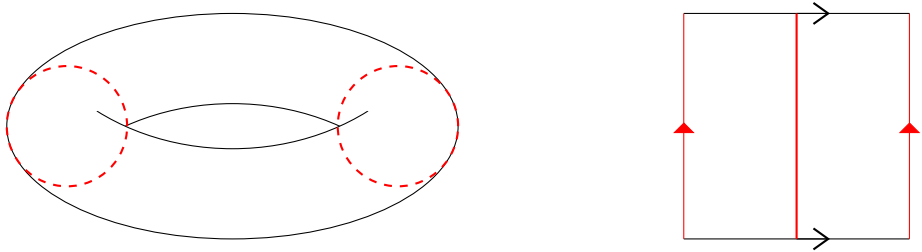
\begin{figure}[h]
        \centering
        \begin{circuitikz}
            \useasboundingbox (-9,-1.5) rectangle (3,1.5);
            \draw (-6,0) ellipse (3 and 1.5);
            \begin{scope}
                \clip (-6,-1.8) ellipse (3 and 2.5);
                \draw (-6,2.2) ellipse (3 and 2.5);
            \end{scope}
            \begin{scope}
                \clip (-6,2.2) ellipse (3 and 2.5);
                \draw (-6,-2.2) ellipse (3 and 2.5);
            \end{scope}
            \draw[red,thick,dashed] (-8.2,0) circle (0.8cm);
            \draw[red,thick,dashed] (-3.8,0) circle (0.8cm);
        
            \draw (0,-1.5) -- node [vee][rotate = 90]{} (3,-1.5); 
            \draw[red] (3,-1.5) --  node {\midarrowu}(3,1.5);
            \draw (0,1.5) -- node [vee][rotate = 90]{} (3,1.5);
            \draw[red] (0,-1.5) -- node {\midarrowu}(0,1.5);
            
            \draw[red, thick] (1.5, -1.5) to (1.5,1.5);
        \end{circuitikz}  
        \caption{A $b$-manifold $(\mathbb{T}^2, S^1 \times S^1)$ and the corresponding fundamental polygon.}
    \end{figure}
    
    \phantom{a}

    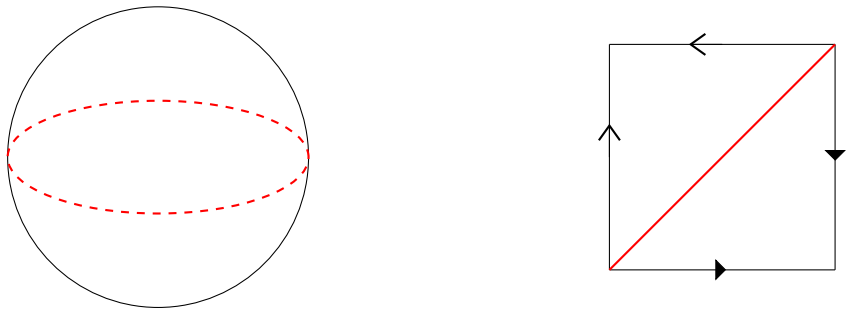
\begin{figure}[h]
        \centering
        \begin{circuitikz}
            \draw (-6, 0) circle (2cm);
        \draw[red,thick,dashed] (-6,0) ellipse (2 and 0.75);
        
            \draw (0,-1.5) to node{\midarrowr} (3,-1.5); 
            \draw (3,-1.5) -- node {\midarrowd} (3,1.5);
            \draw (0,1.5) -- node [vcc][rotate = 90]{} (3,1.5);
            \draw (0,-1.5) --  node [vcc]{}  (0,1.5);
            
            \draw[red, thick] (0, -1.5) to (3,1.5);
        \end{circuitikz}  
        \caption{A $b$-manifold $(\mathbb{S}^2, S^1)$ and the corresponding fundamental polygon.}
    \end{figure}
    
\section{ A $b^m$-symplectic slice theorem}
    
    In this section we prove the $b^m$-symplectic version of the slice theorem \ref{th:bmHsl}.
    
    Let us recall basic notions concerning slice theorems in the smooth and symplectic categories.
    
\subsection{The symplectic slice theorem}

    This section revisits different versions of slice theorems that are widely used in geometry. We will start with the general case of a slice theorem formulated by Palais in \cite{palais1, palais2} for a compact Lie group action on an abstract smooth manifold and explain the notion of a slice. Then we will consider the Guillemin-Sternberg symplectic slice theorem formulated for Hamiltonian group actions on a symplectic manifold. This theorem gives a  normal form theorem providing a semi-global description of the moment map using the slice representation. We finish this section with a short review of possible generalizations. 
    
\subsubsection{A general slice theorem for smooth actions}

    In this section, we explain the notion of slice and state the most basic slice theorem, first formulated by Palais \cite{palais1, palais2} for the group action on a general manifold and symplectic group action by Marle in \cite{marle1985modele}.
    
    Consider compact Lie group $G$ and a smooth manifold $W$ such that $G$ acts on $W$ by diffeomorphisms. For now, we do not put any restrictions on the geometric structure of $W$. It is well known that the orbits of $G$-action are submanifolds of $W$. The slice theorem allows us to describe the action in a tubular neighbourhood of an obit. We denote the orbit of point $x \in W$ by $\mathcal O_x = \{ y \in W | y = g \cdot x \text{ for some } g \in G\}$ and its stabilizer by $G_x = \{ g \in G | g \cdot x = x \}$. 
    
    A map $f_x: G \longrightarrow W$ such that $ g \longmapsto g \cdot x$ is called an orbit map. The quotient vector space $V_x = T_x W / T_x \mathcal{O}_x$ is called slice in point $x$ under the $G$-action. Slice theorem shows that the following diagram commutes
    $$
        \begin{CD}
            G/G_x @>{}>> G \times_{G_x} V_x \\
            @VV{f_x}V @VV{\Bar{f}_x}V \\
            \mathcal{O}_x @>{}>> W
        \end{CD}
    $$
    This can be stated as a theorem:
    \begin{theorem}[Slice theorem]
        There exists an equivariant diffeomorphism from an equivariant open neighborhood of the zero section in $G \times_{G_x} V_x$ to an open neighborhood of $\mathcal O_x$ in $W$, which sends the zero section $G / G_x$ onto the orbit $\mathcal{O}_x$ by the natural map $f_x$.
    \end{theorem}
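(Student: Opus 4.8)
The plan is to reduce this to the equivariant tubular neighbourhood theorem, using the compactness of $G$ to produce an invariant metric whose exponential map provides the diffeomorphism. First I would average an arbitrary Riemannian metric on $W$ over $G$ against the Haar measure to obtain a $G$-invariant metric. Since $G$ is compact, the orbit $\mathcal{O}_x = G/G_x$ is a compact embedded submanifold, and its normal bundle $N\mathcal{O}_x$ with respect to the invariant metric carries a natural $G$-action covering the $G$-action on $\mathcal{O}_x$. The fibre $N_x = (T_x\mathcal{O}_x)^{\perp}$ over $x$ is preserved by the stabilizer $G_x$, so $G_x$ acts on it by the slice (isotropy) representation.

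The next step is the key algebraic identification. Because $N_x$ is the orthogonal complement of $T_x\mathcal{O}_x$ inside $T_xW$, the quotient projection $T_xW \to T_xW/T_x\mathcal{O}_x = V_x$ restricts to a $G_x$-equivariant linear isomorphism $N_x \xrightarrow{\sim} V_x$. This isomorphism is compatible with the bundle structures and induces a $G$-equivariant diffeomorphism of total spaces
$$
    G \times_{G_x} V_x \;\cong\; G \times_{G_x} N_x \;\cong\; N\mathcal{O}_x,
$$
where the middle object is the associated bundle and the rightmost is the geometric normal bundle. Under this identification the zero section corresponds to $G/G_x$ and maps to the orbit by $f_x$.

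I would then invoke the exponential map $\exp\colon N\mathcal{O}_x \to W$ of the invariant metric. The crucial feature is that $G$ acts by isometries, so geodesics are carried to geodesics and $\exp$ is $G$-equivariant; it is defined and smooth on a neighbourhood of the zero section because the orbit is compact. The standard tubular neighbourhood theorem gives that $\exp$ restricts to a diffeomorphism from some open neighbourhood of the zero section onto an open neighbourhood of $\mathcal{O}_x$ in $W$. Equivariance allows this neighbourhood to be shrunk to a $G$-invariant one: one replaces it by the union of its $G$-translates, or equivalently by the $\exp$-image of an invariant $\varepsilon$-disc bundle, which exists since the injectivity radius along the compact orbit is bounded below. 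Composing $\exp$ with the identification of the previous paragraph yields the desired equivariant diffeomorphism sending $G/G_x$ onto $\mathcal{O}_x$ via $f_x$.

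The main obstacle is ensuring that the tubular neighbourhood can be taken $G$-invariant rather than merely open; this is where compactness of $G$ (for the invariant metric and for the equivariance of $\exp$) and compactness of $\mathcal{O}_x$ (for the uniform lower bound on the injectivity radius) are both essential. The remaining verifications—that the averaged metric is genuinely $G$-invariant, that $\exp$ intertwines the two actions, and that the normal-bundle identification is equivariant—are routine once the invariant metric is fixed.
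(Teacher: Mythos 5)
Your proof is correct and complete. Note that the paper itself offers no proof of this statement: it is recalled as a classical result of Palais (\cite{palais1, palais2}), stated in the preliminaries merely to fix the notion of a slice before the $b^m$-symplectic version is developed. Your argument --- Haar-averaging to get an invariant metric, the $G_x$-equivariant identification of the abstract slice $V_x = T_xW/T_x\mathcal{O}_x$ with the orthogonal complement $N_x = (T_x\mathcal{O}_x)^{\perp}$, the induced isomorphism $G \times_{G_x} V_x \cong N\mathcal{O}_x$, and the equivariant exponential map with the injectivity-radius bound coming from compactness of the orbit --- is exactly the standard proof in the references the paper points to, and it correctly handles the one point the paper's formulation makes slightly nonstandard, namely that the slice is defined as a quotient rather than as a complement, so that the passage through $N_x$ is genuinely needed.
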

    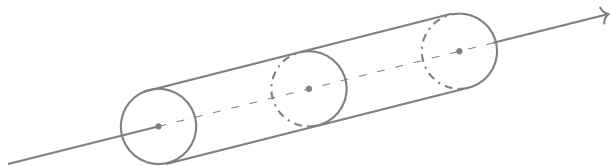
\begin{figure}[h]
        \centering
        \begin{tikzpicture}
            \draw[gray, thick] (0, 0) -- (2, 0.5);
            \draw[gray, thick] (1.8, 0.96) -- (5.9, 2);
            \draw[gray, thick] (2.1, 0) -- (6.06,1);
            \draw[gray, ultra thin, dash dot] (2, 0.5) -- (6.45, 1.6125);
            \draw[->][gray, thick] (6.45, 1.6125) -- (8, 2);
            \draw[gray, thick](2,0.5) circle (0.5);
            \draw[gray, thick, dash dot](4,1.5) arc (90:270:0.5);
            \draw[gray, thick, dash dot](6,2) arc (90:270:0.5);
            \draw[gray, thick](4,0.5) arc (-90:90:0.5);
            \draw[gray, thick](6,1) arc (-90:90:0.5);
            \filldraw [gray] (2,0.5) circle (1pt);
            \filldraw [gray] (4,1) circle (1pt);
            \filldraw [gray] (6,1.5) circle (1pt);
        \end{tikzpicture}
        \caption{A slice of an orbit}
        \label{fig:SimplSlice}
    \end{figure}    

\subsubsection{A slice theorem for Hamiltonian actions}

    In this section, we deal with the symplectic (and at the same time Hamiltonian) slice theorem formulated by Guillemin-Sternberg in \cite{GS84} and independently by Marle in \cite{marle1985modele}. Even more interestingly for us, we recall the Guillemin-Sternberg local normal form theorem \cite{GS90} that also gives a semi-global normal form for the moment map of the slice representation. 
    
    First, we start by reminding the definition of a Hamiltonian space.
    \begin{definition}
        An action of a group $G$ on a symplectic manifold $(M, \omega)$ is called \textbf{Hamiltonian} if it preserves the symplectic structure and admits an equivariant moment map $\mu: M \to \mathfrak{g}^*$ such that
        $$
            \iota(\upsilon_\xi) \omega = d\left <\phi, \xi \right>, \forall \xi \in \mathfrak{g}.
        $$
    \end{definition}
    Here $\left < , \right >$ is natural pairing identifying $\mathfrak g$ and $\mathfrak g^*$ and $\upsilon_\xi$ is generating vector field on $M$.
        
     \begin{theorem}[Guillemin-Sternberg, Marle] \label{Th:SSl}
         Let $(M, \omega, G)$ be a symplectic manifold together with Hamiltonian group action. Let $p$ be a point in $M$ such that $\mathcal O_p$ is contained in the zero level set of the moment map. Denote $G_p$ the stabilizer and $\mathcal O_p$ the orbit of $p$. There is a $G$-equivariant symplectomorphism from a neighbourhood of the zero section of the bundle $T^* G \times_{G_p} V_p$ equipped with a symplectic model to a neighbourhood of the orbit $\mathcal O_p$.
    \end{theorem}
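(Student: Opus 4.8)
The plan is to deduce the symplectic normal form from the smooth (Palais) slice theorem by upgrading the equivariant diffeomorphism it provides to an equivariant symplectomorphism through a relative Moser argument, after first building an explicit linear symplectic model from the data at $p$. First I would analyze the linear picture at $p$. Write $N = T_p \mathcal{O}_p = \mathfrak{g}\cdot p$ for the tangent space to the orbit. Since $\mathcal{O}_p$ lies in $\mu^{-1}(0)$, equivariance of $\mu$ together with the defining relation $\iota(\upsilon_\xi)\omega = d\langle \mu, \xi\rangle$ gives $\omega_p(\upsilon_\xi,\upsilon_\eta) = \langle \mu(p), [\xi,\eta]\rangle = 0$ (up to sign), so $N$ is isotropic and $N \subset N^{\omega} = \ker d\mu_p$. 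The \emph{symplectic slice} is then the reduced space $V_p = N^{\omega}/N$, a symplectic vector space carrying a linear $G_p$-representation; choosing $G_p$-invariant complements yields a $G_p$-equivariant splitting $T_p M = N \oplus V_p \oplus N^{*}$ in which $N \oplus N^{*}$ is a symplectic pair dual under $\omega_p$ and isomorphic to $T^{*}(\mathfrak{g}/\mathfrak{g}_p)$ at the identity coset.

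Next I would construct the model. Fixing a $G_p$-invariant complement $\mathfrak{m}$ to $\mathfrak{g}_p$ in $\mathfrak{g}$, I identify $N^{*} \cong \mathfrak{m}^{*}$ with the annihilator of $\mathfrak{g}_p$ in $\mathfrak{g}^{*}$. The model space is the associated bundle $T^{*}G \times_{G_p} V_p$, equipped with the symplectic form obtained by symplectic reduction of $(T^{*}G \times V_p, \omega_{T^{*}G}\oplus\omega_{V_p})$ at level $0$ for the lifted diagonal $G_p$-action, with the residual $G$-action by left translation on the $T^{*}G$-factor. I would check that this reduced form is non-degenerate near the zero section, $G$-invariant, and admits a moment map in explicit normal form, and that by construction its restriction to the zero section $G/G_p \cong \mathcal{O}_p$ together with its transverse jet at $p$ reproduces exactly the splitting of $T_pM$ found above.

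Finally I would match the two geometries. Palais' slice theorem furnishes a $G$-equivariant diffeomorphism $\Phi$ from a neighbourhood of the zero section of $T^{*}G \times_{G_p} V_p$ onto a neighbourhood of $\mathcal{O}_p$ in $M$, sending the zero section to the orbit. Pulling back $\omega$ produces two $G$-invariant symplectic forms on the model that agree along the zero section and to first order transversally, by the previous step. An equivariant relative Moser (Darboux--Weinstein) argument — using averaging over the compact group $G$ and a $G$-invariant interpolating primitive of the difference vanishing on the zero section — then yields a $G$-equivariant diffeomorphism fixing the zero section and intertwining the two forms; composing it with $\Phi$ gives the desired equivariant symplectomorphism.

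The hard part will be the equivariant relative Moser step: I must verify that $\Phi^{*}\omega - \omega_{\mathrm{model}}$ is exact near the zero section with a $G$-invariant primitive $\sigma$ whose value and first transverse derivatives vanish along the zero section, so that the time-dependent vector field $X_t$ defined by $\iota_{X_t}\omega_t = -\sigma$ is everywhere defined, $G$-equivariant, and tangent to (indeed vanishing on) the zero section, hence integrates to a flow fixing it. Keeping all complements, splittings, retractions and homotopies strictly $G_p$- and $G$-invariant throughout is the principal technical burden, and it is precisely this that forces the use of compactness of $G$ for the averaging.
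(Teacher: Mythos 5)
The paper itself offers no proof of Theorem \ref{Th:SSl}: it is quoted as a classical result, with the proof deferred to Guillemin--Sternberg \cite{GS84} and Marle \cite{marle1985modele}. Your proposal is correct and is essentially that standard argument — the isotropic-orbit linear algebra giving $V_p = N^{\omega}/N$, the Marle--Guillemin--Sternberg model realized as the reduction of $\bigl(T^*G \times V_p, \omega_{T^*G}\oplus\omega_{V_p}\bigr)$ at level $0$ by the diagonal $G_p$-action, and the upgrade of the Palais slice diffeomorphism to a symplectomorphism via an equivariant Darboux--Weinstein (relative Moser) argument — so there is nothing in the paper to contrast it with.
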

       
    The action of the transversal element of $T^* G \times_{G_p} V_p$ is given by a cotangent lift (see \ref{sec:bmCotL}).
    
    Besides the symplectic slice theorem, we are also going to refer to the following theorem by Guillemin and Sternberg \cite{GS90} that gives not only a semilocal description of the group action in the neighbourhood of an orbit but also a normal form for the corresponding moment map.
    \begin{theorem}[Guillemin-Sternberg]
        Let $(M, \omega, \mu)$ be a Hamiltonian $G$-space. For any $p \in M$, let $H = Stab(p)$, let $K = Stab(\mu(p))$, and let $V$ be the symplectic slice at $p$. We denote by $\mathfrak{h}$ the Lie algebra of $H$ and by $\mathfrak{h}^0$ its annihilator. There exists a neighbourhood of the orbit $G \cdot p$ which is equivariantly diffeomorphic to a neighborhood of the orbit $G \cdot [e, 0, 0]$ in
        $$
            Y := G \times_H ((\mathfrak{h}^0 \cap \mathfrak{k}^*) \times V ).
        $$
        In terms of this diffeomorphism, the moment map $\mu: M \to \mathfrak{g}^*$ may be written as
        $$
            \mu([g, \gamma, v]) = Ad^{*}_{g} (\mu(p) + \gamma + \phi(v)),
        $$
        where $\phi: V \to \mathfrak{h}^*$ is the moment map for the slice representation.  
    \end{theorem}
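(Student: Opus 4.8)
The plan is to reduce the statement to two essentially independent tasks: building an explicit symplectic model on $Y$ whose moment map is the asserted formula, and then showing that an arbitrary Hamiltonian $G$-space is equivariantly symplectomorphic to this model near the orbit via an equivariant Moser argument. I would begin with the linear algebra at $p$. Evaluating the moment map condition $\iota(\upsilon_\xi)\omega = d\langle\mu,\xi\rangle$ at $p$ gives, for all $w \in T_pM$ and $\xi \in \mathfrak{g}$, the identity $\langle d\mu_p(w),\xi\rangle = \omega_p(\xi_M(p), w)$; hence $\ker d\mu_p = (\mathfrak{g}\cdot p)^{\omega_p}$ and, dually, $\operatorname{im} d\mu_p = \mathfrak{h}^0$, while equivariance of $\mu$ forces $H \subseteq K$. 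Choosing an $H$-invariant inner product on $T_pM$ and on $\mathfrak{g}$, I would split $T_pM$ as an $H$-module into the orbit directions $\mathfrak{g}\cdot p$, the symplectic slice $V = (\mathfrak{g}\cdot p)^{\omega_p}/((\mathfrak{g}\cdot p)^{\omega_p}\cap \mathfrak{g}\cdot p)$ on which $\omega_p$ descends to a nondegenerate $H$-invariant form, and a complementary cotangent-fibre summand that the inner product identifies with $\mathfrak{h}^0$. Separating inside $\mathfrak{h}^0$ the tangent directions to the coadjoint orbit of $\mu(p)$ from the transverse ones yields the factor $\mathfrak{h}^0\cap\mathfrak{k}^*$ appearing in $Y$.

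Second, I would construct the model. The slice representation of $H$ on $V$ is linear and symplectic, so it carries the quadratic moment map $\phi:V\to\mathfrak{h}^*$, $\langle\phi(v),\eta\rangle = \tfrac12\omega_V(\eta\cdot v, v)$. On $T^*G\times V$, equipped with the sum of the canonical form and the constant form $\omega_V$, the group $H$ acts Hamiltonianly by the cotangent lift of right translation together with the slice representation, and the residual left $G$-action is Hamiltonian and commutes with it. Performing symplectic (Marsden-Weinstein) reduction of $T^*G\times V$ by $H$ at the level determined by $\mu(p)|_{\mathfrak{h}}$, and trivialising $T^*G\cong G\times\mathfrak{g}^*$ by left translation, produces a $G$-invariant symplectic form on a neighbourhood of the zero section of $Y = G\times_H((\mathfrak{h}^0\cap\mathfrak{k}^*)\times V)$. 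A direct computation of the reduced left $G$-moment map in these coordinates then gives exactly $\mu_Y([g,\gamma,v]) = \Ad^*_g(\mu(p)+\gamma+\phi(v))$, where $\phi(v)$ is reinserted into $\mathfrak{g}^*$ through the invariant splitting $\mathfrak{g}^* = \mathfrak{h}^*\oplus\mathfrak{h}^0$.

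Third, I would match $(M,\omega,\mu)$ with $(Y,\omega_Y,\mu_Y)$. The Palais slice theorem supplies a $G$-equivariant diffeomorphism $\Psi$ from a neighbourhood of the zero section of $Y$ onto a neighbourhood of $G\cdot p$ sending $G\cdot[e,0,0]$ to the orbit; by the first step I can arrange that $d\Psi$ at $[e,0,0]$ is a symplectic, moment-map-preserving isomorphism, so that $\Psi^*\omega$ and $\omega_Y$ agree to first order along the orbit, as do $\Psi^*\mu$ and $\mu_Y$. An equivariant relative Moser argument along the orbit, using that $\omega_Y - \Psi^*\omega$ admits an $H$-basic $G$-equivariant primitive vanishing to the appropriate order on the zero section, then deforms $\Psi$ into an honest $G$-equivariant symplectomorphism. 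I expect the principal obstacle to be carrying out this last step so that the moment maps are intertwined \emph{exactly} rather than merely up to a constant: one must choose the Moser primitive and the interpolating family $G$-equivariantly and control the moment-map drift throughout the isotopy, which is delicate precisely because the orbit tangent $\mathfrak{g}\cdot p$ is coisotropic rather than symplectic and the reduction level must be tracked consistently along the deformation.
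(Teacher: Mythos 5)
The paper itself does not prove this statement: it is recalled as classical background (cited to \cite{GS90}) for the $b^m$-slice theorem, so your proposal has to be judged on its own merits. Your Steps 1 and 3 are essentially the standard ones, and in Step 3 you actually overestimate the difficulty: once $\Psi$ is a $G$-equivariant symplectomorphism near the orbit with $\Psi([e,0,0])=p$, the difference $\mu\circ\Psi-\mu_Y$ is closed, hence constant on a connected neighbourhood, and evaluating at $[e,0,0]$ kills the constant; no control of ``moment-map drift'' along the Moser isotopy is required.

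The genuine gap is Step 2, the construction of the model. Marsden--Weinstein reduction of $T^*G\times V$ by $H$ (at $0$, or at the $H$-fixed level $\mu(p)|_{\mathfrak h}$) produces, after left trivialization and a choice of $H$-invariant splitting $\mathfrak g^*=\mathfrak h^*\oplus\mathfrak h^0$, the manifold $G\times_H(\mathfrak h^0\times V)$, of dimension $2(\dim\mathfrak g-\dim\mathfrak h)+\dim V=\dim M+(\dim\mathfrak g-\dim\mathfrak k)$. This equals $\dim Y=\dim M$ only when $\mathfrak k=\mathfrak g$, i.e.\ when $\mu(p)$ is fixed by the coadjoint action (for instance $\mu(p)=0$, or $G$ abelian). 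In general your reduced space is the local model for the shifted space $M\times\mathcal O^-$, with $\mathcal O$ the coadjoint orbit through $\mu(p)$, not for $M$, and no choice of level in $\mathfrak h^*$ can repair the dimension count. Consequently the symplectic form on a neighbourhood of the zero section of $Y=G\times_H((\mathfrak h^0\cap\mathfrak k^*)\times V)$ is never actually constructed, nor is the moment map formula with $\gamma$ ranging over $\mathfrak h^0\cap\mathfrak k^*$ rather than over all of $\mathfrak h^0$. The missing ingredient is precisely what handles the coadjoint-orbit directions: either the Guillemin--Sternberg symplectic cross-section theorem (pass to the $K$-invariant symplectic submanifold $R=\mu^{-1}(S)$, $S$ a slice at $\mu(p)$ for the coadjoint action, where the moment value is $K$-fixed, apply the zero-level model there, and induce $M\cong G\times_K R$), or Marle's direct construction of $\omega_Y$ on $Y$ via the explicit formula containing the term $\langle\mu(p)+\gamma+\phi(v),[\xi,\xi']\rangle$. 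The same slip already appears in your Step 1: the $H$-invariant complement of $(\mathfrak g\cdot p)\oplus V$ in $T_pM$ is not identified with $\mathfrak h^0$ but only with $(\mathfrak k/\mathfrak h)^*\cong\mathfrak h^0\cap\mathfrak k^*$, because $d\mu_p$ already maps the orbit directions $\mathfrak g\cdot p$ onto $\mathfrak k^0\subset\mathfrak h^0$.
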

    The symplectic form on the quotient bundle is called the MGS-symplectic form \cite{marle1985modele, GS90}, and further on will be denoted as $\omega_{MGS}$.
    
    Now we recall the statement of symplectic reduction proved by Marsden and Weinstein for free actions in \cite{MW} (check \cite{GGK} for the proof in the more general case of locally free actions that we include below). 
    \begin{theorem}[Symplectic Reduction]
        Let $(M, \omega, \mu)$ be a Hamiltonian $G$-space. Suppose that $\alpha \in (\mathfrak g^*)^G$ is a regular value for $\mu$; or, more generally, that the level set $\mu^{-1}(\alpha)$ is a manifold and $G$ acts on it (locally) freely. Then the topological space  of orbits $M_\alpha$  of the level-set $\mu^{-1}(\alpha)$ is a manifold (orbifold resp.); and there exists a unique closed two-form $\omega_\alpha$ on $M_\alpha$ such that $\pi^* \omega_\alpha = i^* \omega$, where $\pi: Z \to M_\alpha$ is the quotient map and $i : Z \to M$ is the inclusion map. The reduced form $\omega_\alpha$ is non-degenerate on $M_\alpha$ if and only if the form $\omega$ is non-degenerate on $M$ at the points of $\mu^{-1}(\alpha)$.
    \end{theorem}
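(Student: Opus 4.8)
The plan is to run the classical three-step Marsden--Weinstein argument: first install the smooth (or orbifold) structure on $M_\alpha$, then descend $\omega$ to a closed two-form by a \emph{basicness} argument, and finally resolve non-degeneracy through pointwise symplectic linear algebra.

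First I would produce the quotient. Since $\alpha$ is a regular value, the regular value theorem makes $\mu^{-1}(\alpha)$ an embedded submanifold of codimension $\dim G$ (in the more general hypothesis this smoothness is assumed outright). Because $\alpha \in (\mathfrak{g}^*)^G$ and $\mu$ is equivariant, for $p \in \mu^{-1}(\alpha)$ and $g \in G$ one has $\mu(g\cdot p) = \Ad^*_g \mu(p) = \Ad^*_g \alpha = \alpha$, so $G$ preserves the level set. The action on $\mu^{-1}(\alpha)$ is (locally) free by hypothesis and proper (e.g.\ $G$ compact), so the quotient manifold theorem gives that $M_\alpha = \mu^{-1}(\alpha)/G$ is a manifold (respectively an orbifold in the locally free case) with $\pi$ a surjective submersion.

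Next I would construct $\omega_\alpha$ by showing that $i^*\omega$ is $G$-basic. Invariance is immediate from $g^*\omega = \omega$. For horizontality, fix $\xi \in \mathfrak{g}$; the fundamental vector field $\upsilon_\xi$ is tangent to $\mu^{-1}(\alpha)$ by the previous step, and
$$
\iota(\upsilon_\xi)\, i^*\omega = i^*\bigl(\iota(\upsilon_\xi)\omega\bigr) = i^*\, d\langle \mu,\xi\rangle = d\langle i^*\mu,\xi\rangle = d\langle\alpha,\xi\rangle = 0,
$$
since $\mu \equiv \alpha$ on the level set. As $\pi$ is a surjective submersion, a basic form descends uniquely, yielding the claimed $\omega_\alpha$ with $\pi^*\omega_\alpha = i^*\omega$; uniqueness follows from injectivity of $\pi^*$. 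Closedness is then automatic: $\pi^* d\omega_\alpha = d\, i^*\omega = i^* d\omega = 0$, and injectivity of $\pi^*$ forces $d\omega_\alpha = 0$.

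The heart of the matter, and the step I expect to be the main obstacle, is non-degeneracy, which I would reduce to linear algebra at each $p \in \mu^{-1}(\alpha)$. The defining relation $\omega_p(\upsilon_\xi, w) = \langle d\mu_p(w),\xi\rangle$ identifies $\ker d\mu_p = (T_p\mathcal{O}_p)^{\omega_p}$, the $\omega_p$-orthogonal of the orbit tangent, so that $T_{[p]}M_\alpha \cong (T_p\mathcal{O}_p)^{\omega_p}/T_p\mathcal{O}_p$ and the radical of the induced bilinear form is $\bigl((T_p\mathcal{O}_p)^{\omega_p}\cap ((T_p\mathcal{O}_p)^{\omega_p})^{\omega_p}\bigr)/T_p\mathcal{O}_p$. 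When $\omega_p$ is non-degenerate the double-orthogonal identity $(U^{\omega_p})^{\omega_p}=U$ collapses this radical to $T_p\mathcal{O}_p/T_p\mathcal{O}_p = 0$, so $\omega_\alpha$ is non-degenerate at $[p]$. Conversely, the correct bookkeeping in the degenerate case uses $(U^{\omega_p})^{\omega_p} = U + \ker\omega_p$, which shows the radical of $\omega_\alpha$ at $[p]$ to be the image of $\ker\omega_p$ in $T_{[p]}M_\alpha$; hence any degeneracy of $\omega$ transverse to the orbit descends to a degeneracy of $\omega_\alpha$, giving the stated equivalence. The delicate point throughout is keeping the $\omega$-orthogonality accounting correct once $\omega$ is allowed to degenerate, since the clean double-orthogonal identity is precisely what fails there --- and this is exactly the mechanism by which, in the $b^m$-setting developed later, reduction can erase the singularity of the form.
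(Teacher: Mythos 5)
Your proposal fills in a proof that the paper itself does not supply: the theorem is recalled verbatim from the literature, with the proof outsourced to \cite{MW} (free case) and \cite{GGK} (locally free case). So there is no in-paper argument to compare against; measured against the classical Marsden--Weinstein proof, your skeleton (quotient manifold/orbifold theorem, descent of the basic form $i^*\omega$, pointwise symplectic linear algebra) is the standard one, and your first two steps are correct as written, including the properness caveat, which is consistent with the paper's standing assumption that $G$ is compact.

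There is, however, a genuine gap in the non-degeneracy equivalence. Your accounting shows that the radical of $\omega_\alpha$ at $[p]$ is the image of $\ker\omega_p$ in $T_{[p]}M_\alpha$, i.e.\ $\ker\omega_p/(\ker\omega_p\cap T_p\mathcal{O}_p)$; your phrase \lq\lq any degeneracy of $\omega$ \emph{transverse to the orbit} descends'' is exactly what this buys, but the theorem asserts a full equivalence, so you must also exclude the case $0\neq\ker\omega_p\subseteq T_p\mathcal{O}_p$, in which $\omega$ is degenerate yet nothing descends and $\omega_\alpha$ stays non-degenerate. Under the regular-value hypothesis this is a one-line fix you omitted: if $\upsilon_\xi(p)\in\ker\omega_p$ then $d\langle\mu,\xi\rangle_p=\iota(\upsilon_\xi)\omega_p=0$, so surjectivity of $d\mu_p$ forces $\xi=0$; hence $\ker\omega_p\cap T_p\mathcal{O}_p=0$ and every degeneracy is automatically transverse. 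Note also that your identification $T_p\mu^{-1}(\alpha)=\ker d\mu_p=(T_p\mathcal{O}_p)^{\omega_p}$, on which the whole linear-algebra step rests, is only valid at regular values: under the weaker hypothesis (level set a manifold, locally free action) the tangent space to the level set can be strictly smaller than $\ker d\mu_p$, and kernel directions of $\omega_p$ transverse to the orbit need not be tangent to the level set at all. Indeed, in that generality the stated equivalence is delicate: for $S^1$ rotating $(S^1\times\R,\ \omega=t\,d\theta\wedge dt)$ with $\mu=t^2/2$ at level $0$, the level set is a circle on which the action is free, the reduced space is a point (vacuously non-degenerate), yet $\omega$ vanishes identically along the level set. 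So your argument, suitably completed, proves the theorem in the regular-value case, while the \lq\lq more generally'' clause requires either reinterpretation or additional hypotheses beyond what your linear algebra (or the bare statement) provides.
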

    
    \begin{remark} Discrete isotropy groups appear naturally in Hamiltonian Dynamics (check, for instance, the twisted models for hyperbolic singularities of integrable systems in \cite{MZ}). This obliges us to consider actions that are not free but locally free.
    \end{remark}
    
\subsection{ The twisted $b^m$-cotangent models} \label{sec:bmCotL}
    
    We start by defining the generalization of twisted cotangent lift to the $b^m$-setting, which will be required for the proof of theorem \ref{th:bmHsl}. 
    
  The equivariant $b^m$-Moser theorem in the case of surfaces lets us visualize the $b^m$-symplectic manifold  $S^1 \times (-\epsilon,\epsilon)$ as a neighbourhood of the zero section of the cotangent bundle $T^* S^1 \cong S^1 \times \R$ with $b^m$-symplectic form given by the formula \begin{equation}\label{twistedsymplectic}
            \omega_c:= c d \theta \wedge \frac{dt}{t}.
    \end{equation} 
    This serves as one of the building blocks in our $b^m$-symplectic model for group actions. We start by introducing the (twisted) cotangent models for $b^m$-symplectic actions.

    The cotangent lift is one of the essential tools in symplectic geometry and the theory of integrable systems. It allows  lifting group actions from a manifold to  actions which are automatically Hamiltonian on the cotangent bundles. This leads to many examples of integrable systems. 
    
    We will consider a generalization of the notion of cotangent lift that lifts actions on a $b^m$-manifold to $b^m$-Hamiltonian actions on the $b^m$-cotangent bundle. This generalizes the set-up developed in \cite{KM} for $b$-symplectic manifolds.
        
    Given an action $\rho$ of a Lie group $G$ on a  $b$-manifold $(M, Z)$, one can lift it to the $b^m$-Hamiltonian action $\hat{\rho}$ of $G$ on the $b^m$-cotangent bundle $^{b^m}T^* M$.
    This can be done via the standard cotangent lift or via the twisted cotangent lift.

For the standard cotangent lift:
    The lifted action $\hat{\rho}$ is given by $\hat{\rho}_g := \rho_{g^{-1}}$ and $\pi$ is a canonical projection from $^{b^m}T^* M$ to $M$. The following diagram commutes:
    $$
        \begin{CD}
            ^{(b^m)}T^* M @>{\hat{\rho}_g}>> ^{(b^m)}T^* M \\
            @VV{\pi}V @VV{\pi}V \\
            M @>{\rho_g}>> M
        \end{CD}
    $$

    In order to introduce the twisted models we need to consider actions of products of groups $S^1\times H$ on products of manifolds $S^1\times N$. 
    The singularity of the $b^m$-form is on the fiber of the cotangent bundle of $S^1$.

We can define the twisted $b^m$-cotangent lift on the cotangent space of a torus $T^* S^1$ as follows: 

On $T^*S^1$ with standard coordinates $(\theta, t)$ we have the logarithmic Liouville one-form
$\lambda_{tw,c} = c\log |t| d\theta$ for $t\neq 0$. This induces a $b$-symplectic form on $T^\ast S^1$  with singularity on the fiber.

More generally, for $b^m$-structures,
we may consider the Liouville one form on $T^* S^1$   $$\lambda_{tw,\textbf{c}}=({c_1} \log |t| + \sum \limits_{i = 1}^{m - 1} {c_{i + 1}}\frac{t^{- i}}{i})d\theta.$$ \noindent where $\textbf{c}$ is a vector with the invariants ${c_i}$.  


The rotations on $S^1$ lift on $T^*(S^1)$ in a $b^m$-Hamiltonian fashion. For $T^*S^1$ we consider the \emph{twisted $b^m$-cotangent lift} with moment map:
    $$
    \mu_{T^*S^1} = c_1 \log |t| + \sum \limits_{i = 1}^{m - 1} c_{i + 1} \frac{t^{- i}}{i}
    $$
    and the twisted form
    $$
        \omega_{T^*S^1} = \sum \limits_1^{m} \frac{{c_i}}{t^i} d \theta \wedge d t.
    $$

    Now combine this twisted action with the standard cotangent lifted action of $H$ to the $b^m$-cotangent model. 
    The action of $S^1 \times H$ on its cotangent bundle is then $b^m$-Hamiltonian with  moment map, given by 
       $$ \mu = c_1 \log |t| + \sum \limits_{i = 1}^{m - 1} c_{i + 1} \frac{t^{- i}}{i} + \mu_0(x, y)
    $$
    with the twisted form ${\omega}$ defined as follows
    $$
        {\omega} = \sum \limits_1^{m} \frac{{c_i}}{t^i} d \theta \wedge d t + \sum \limits_2^n d x_j \wedge d y_j.
    $$

Namely, we consider  the diagonal action of products of groups $S^1\times H$ on products on manifolds $S^1\times N$ and extend this notion. Denote by $\lambda_N$ be the standard Liouville one-form on the cotangent bundle $T^* N$. We endow the product $T^* (S^1 \times N) \cong T^* S^1 \times T^* N$ with the product structure $\lambda:= (\lambda_{tw,c}, \lambda_N)$ (defined for $t\neq 0$). The form $\omega = -d\lambda$ is a $b^m$-symplectic structure  with critical hypersurface described by $t=0$.

If $(x_1,\ldots,x_{n-1})$ is a chart on $N$ and $(x_1,\ldots,x_{n-1},y_1,\ldots,y_{n-1})$ is the corresponding chart on $T^* N$, the form $\lambda$ reads as: 
\begin{equation*}\label{bmliouvilleform}
\lambda =   \left(c_1 \log |t| + \sum \limits_{i = 1}^{m - 1} {c_{i + 1}}\frac{t^{- i}}{i}\right) d\theta+\sum_{i=1}^{n-1} y_i dx_i .
\end{equation*}

Let $H$ be a Lie group acting on $N$ and consider the component-wise action of $G:=S^1\times H$ on $M:=S^1 \times N$ where $S^1$ acts on itself by rotations. We lift this action to the cotangent bundle $T^* N$ as the standard cotangent lift. This construction, where $T^* M$ is endowed with the $b^m$-symplectic form $\omega$, is called the \textbf{twisted $b^m$-cotangent lift}.

This action is Hamiltonian with moment map given by contraction of the fundamental vector fields with the one-form $\lambda$ defined above. The following proposition was proved in \cite{KM} for the $b$-case. The general case is proved using similar gymnastics.

\begin{prop}\label{prop:twmoment} The twisted $b^m$-cotangent lift on $M=S^1\times N$ is Hamiltonian with equivariant moment map $\mu$ given by
 \begin{equation}
\langle\mu(p),X \rangle := \langle \lambda_p ,X^\#|_p \rangle,
\end{equation}

where $X^\#$ is the fundamental vector field of $X$ under the action on $T^* M$.
\end{prop}

\subsection{The Slice theorem}
    Before proceeding with the proof of the $b^m$-symplectic slice theorem, we need to prove some preliminary material.
        
    We start proving the following lemma in the context of $b^m$-symplectic manifolds ( the result for $b$-symplectic manifolds can be found in Section 5 in  \cite{GMWgeomq} ). Here we provide the proof for general $b^m$-symplectic manifolds following mutatis mutandis \cite{GMWgeomq} and also consider its equivariant version, which will be needed in the proof.
        
    As we will see in the proof of the $b^m$-symplectic slice theorem, it will be sufficient to consider the case in which the critical set $Z$ is a trivial mapping torus. So each connected component $Z_i$ of the critical set $Z_i=S^1\times L$ where $L$ denotes a symplectic leaf of the cosymplectic manifold $Z$.
    \begin{lemma}\label{keylemma}
        Let $(M, Z, \omega)$ be a $b^m$-symplectic manifold endowed with an $S^1$-action with non-vanishing highest modular weight. The $b^m$-symplectic form on $Z\times (-\epsilon, \epsilon)$ can be taken to be the two-form
        \begin{equation}
	        \omega=-d\theta\wedge (\sum\limits_{i=1}^m c_i \frac{d t}{t^i}) +\gamma_L
	        \label{eq:dec}
        \end{equation}
        \noindent
        where $\gamma_L$ is the symplectic form on the symplectic leaf $L$. In case there is an action of a group $G$ by $b^m-$symplectic diffeomorphism, this decomposition can  be achieved equivariantly. So, we may assume that the form $\gamma_L$ is $G$-invariant.
    \end{lemma}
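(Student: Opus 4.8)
The plan is to produce the decomposition \eqref{eq:dec} by building an explicit model form of the stated shape and transporting $\omega$ onto it with the \emph{equivariant} $b^m$-Moser theorem (Theorem \ref{theorem:bmmoser}), whose semilocal hypotheses are tailored to exactly this collar situation. First I would fix a single connected component of the critical set; by the trivial mapping torus hypothesis recorded above, write it as $Z = S^1\times L$ with angular coordinate $\theta$ and $L$ a symplectic leaf of the cosymplectic structure on $Z$. Choosing a defining function $t$ for $Z$ inside the collar $Z\times(-\epsilon,\epsilon)$, the semilocal Laurent expansion \eqref{eqn:newlaurent} reads
\begin{equation*}
    \omega = \sum_{j=1}^{m}\frac{dt}{t^{j}}\wedge \pi^{*}(\alpha_{j}) + \beta,
\end{equation*}
with each $\alpha_{j}$ a closed one-form on $Z$ and $\beta$ a closed two-form.

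Since $G$ is compact and acts by $b^m$-symplectomorphisms preserving $Z$, I would average the collar trivialization and the forms over $G$ so that the whole decomposition becomes $G$-invariant; in particular each $\alpha_j$ and $\beta$ may be taken invariant, and we set the leafwise symplectic form $\gamma_L := \beta|_{L}$, which is then $G$-invariant. Each invariant closed one-form on $S^1\times L$ splits as $\alpha_j = c_j\,d\theta + \eta_j$, where $c_j = \alpha_j(\partial_\theta)$ is the (constant) $j$-th modular weight --- so $c_m = a_m\neq 0$ by hypothesis --- and $\eta_j$ is a closed one-form on $L$. The useful observation is that each lower-order correction is an \emph{exact} $b^m$-form: for $j\geq 2$ one has $\tfrac{dt}{t^{j}}\wedge\eta_j = -\tfrac{1}{j-1}\,d\bigl(t^{1-j}\eta_j\bigr)$ and for $j=1$ one has $\tfrac{dt}{t}\wedge\eta_1 = d(\log|t|\,\eta_1)$, using $d\eta_j = 0$. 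Hence $\omega$ and the model
\begin{equation*}
    \omega_1 := -\,d\theta\wedge\Bigl(\sum_{j=1}^{m}c_j\,\frac{dt}{t^{j}}\Bigr) + \gamma_L
\end{equation*}
differ by an exact $b^m$-form together with the leafwise discrepancy $\beta - \pi^{*}\gamma_L$.

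It then remains to check the two hypotheses of Theorem \ref{theorem:bmmoser} for $\omega_0 := \omega$ and $\omega_1$. Both forms induce the same corank-one Poisson structure on $Z$: the foliation is cut out by the leading coefficient, $\alpha_m$ respectively $c_m\,d\theta$, and because $c_m = a_m\neq 0$ and $\eta_m$ is leafwise, these define the same symplectic foliation with leafwise form $\gamma_L$. For the second hypothesis I would invoke the mapping torus description (Definition \ref{mappingtorusstructure}): the modular vector field of either form generates the $S^1$-translation with the common modular period, so the two modular vector fields agree on $Z$ up to a leafwise vector field whose contraction with $\gamma_L$ is a primitive of the $\eta_j$-discrepancy, i.e.\ they differ by a Hamiltonian vector field. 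Theorem \ref{theorem:bmmoser} then yields neighbourhoods of $Z$ and a diffeomorphism $\gamma$ with $\gamma|_Z = \mathrm{id}_Z$ and $\gamma^{*}\omega_1 = \omega_0$, which by the equivariant clause of that theorem may be chosen $G$-equivariant; this is precisely the desired normal form with $\gamma_L$ invariant.

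The main obstacle is the verification of the modular-vector-field hypothesis in the comparison step, where the non-vanishing of the highest modular weight $a_m = c_m$ is indispensable: it is what guarantees that the leading term genuinely controls the transverse foliation direction, so that $\theta$ may be used as the modular coordinate and the lower-order closed one-forms $\eta_j$ contribute only the exact $b^m$-corrections above, which Moser can absorb. Care is needed to confirm that the leafwise discrepancy between $\beta$ and $\pi^{*}\gamma_L$ is genuinely Hamiltonian rather than merely closed; this is where invariance of $\gamma_L$ and the structure of the compact leaf $L$ are used, and it is the one point of the argument that must be executed with the primitives written out explicitly.
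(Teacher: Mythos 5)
Your proposal follows the same scaffolding as the paper's proof --- reduce to the trivial mapping torus $Z=S^1\times L$, show that $\omega$ differs from the model \eqref{eq:dec} by something a Moser argument can absorb, and invoke the equivariant $b^m$-Moser theorem (Theorem \ref{theorem:bmmoser}) --- but the step that does the real work in your version is false. The identities $\frac{dt}{t^{j}}\wedge\eta_j=-\frac{1}{j-1}\,d(t^{1-j}\eta_j)$ and $\frac{dt}{t}\wedge\eta_1=d(\log|t|\,\eta_1)$ hold only on $M\setminus Z$: the primitives $t^{1-j}\eta_j$ and $\log|t|\,\eta_1$ are \emph{not} $b^m$-forms, because sections of $\Lambda^{\bullet}\left({}^{b^m}T^{*}M\right)$ carry their singularity only against $\frac{dt}{t^{m}}$ with smooth coefficients, never as singular coefficients multiplying leafwise forms. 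In fact, by the $b^m$-Mazzeo-Melrose theorem, the class of $\frac{dt}{t^{j}}\wedge\pi^{*}\eta_j$ in ${}^{b^m}H^{2}$ of the collar is precisely $[\eta_j]\in H^{1}(Z)$ sitting in the $j$-th summand, so this term is exact in the $b^m$-complex if and only if $\eta_j$ is exact on $Z$ --- closedness is not enough; and a Moser deformation in the $b^m$-category needs a $b^m$-primitive, since otherwise the vector field solving $\iota_{X_s}\omega_s=-\mu$ is not a $b^m$-vector field and its flow does not preserve $Z$. The same unjustified exactness reappears twice more: your claim that both forms induce the same corank-one Poisson structure on $Z$ (``because $\eta_m$ is leafwise'') is wrong as stated, since $\ker(c_m\,d\theta+\eta_m)\neq\ker(c_m\,d\theta)$ whenever $\eta_m\neq0$; what actually saves this hypothesis is that $\eta_m=0$ automatically once the product structure is adapted to the foliation, i.e.\ the fibres $\{\theta\}\times L$ \emph{are} the symplectic leaves, so $\alpha_m$ annihilates $TL$. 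Likewise your check on the modular vector fields invokes ``a primitive of the $\eta_j$-discrepancy'', which need not exist.

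The idea you are missing, and which is the engine of the paper's proof, is the hypothesis that the $S^1$-action is $b^m$-\emph{Hamiltonian} (implicit in the statement, since modular weights are only defined for actions admitting a moment map). The paper normalizes $\omega=-d\theta\wedge(\sum_{i}c_i\frac{dt}{t^{i}})+\gamma_L+d\theta\wedge\beta$ with $\iota_{\partial/\partial\theta}\beta=0$, and then uses the moment map condition: $\iota_{\partial/\partial\theta}\omega$ must equal $d$ of a $b^m$-function whose smooth part is some $h\in C^{\infty}(M)$, and matching smooth parts forces $\beta=dh$. Hence the entire discrepancy is the exact smooth form $d(h\,d\theta)$, the linear path $\omega_s$ stays $b^m$-symplectic near $Z$, and the equivariant $b^m$-Moser theorem applies; a second equivariant Moser argument, using $[\gamma_L]=[\pi^{*}i^{*}\gamma_L]$, then makes $\gamma_L$ $t$-independent and $G$-invariant. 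In your write-up this discrepancy hides inside $\beta-\pi^{*}\gamma_L$, and you explicitly defer its treatment; but the deferred claim is unprovable from the ingredients you allow yourself: a term $d\theta\wedge\eta$ with $\eta$ closed and non-exact on $L$ is closed, $S^1$- and $G$-invariant, and consistent with all your normalizations, yet it represents a nonzero class in $H^{2}$ of the collar, so no diffeomorphism fixing $Z$ pointwise (which is what Theorem \ref{theorem:bmmoser} produces) can remove it. Neither invariance of $\gamma_L$ nor compactness of $L$ excludes such a term; only the identity $\iota_{\partial/\partial\theta}\omega=d\mu$ with $\mu\in{}^{b^m}\mathcal{C}^{\infty}(M)$ does. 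So the proposal has a genuine gap, and it is filled exactly by the hypothesis you never invoked.
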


    \begin{proof}
        By using the Laurent decomposition given by equation \ref{eqn:newlaurent} of the $b^m$-form and denoting by $\theta$  the angular coordinate on $S^1$, we can assume that the $b^m$-symplectic form is written as, 
        \begin{equation}
	       -d\theta\wedge (\sum\limits_{i=1}^m c_i \frac{d t}{t^i})+\gamma_L+d\theta \wedge \beta
	        \label{eq:dec2}
        \end{equation}
        with $\gamma_L$, the symplectic form on the symplectic leaf $L$ (so, a priori, possibly depending on $t$) and $\beta$,  a one-form depending on all the coordinates.
        Denote by  $g$ the function $g=\iota_{\frac{\partial}{\partial\theta}}\beta$.

        Now replace $\beta$ by a new $\beta$ equal to $\beta-g~d\theta$ in such a way that 
        $\iota _{\frac{\partial}{\partial\theta}}\beta=0$. 

        On the other hand, as the action of $S^1$  is $b^m$-Hamiltonian. There exist a smooth function $h~\in~ C^\infty(M)$ such that
        $$
            \iota_{\frac{\partial}{\partial\theta}}\omega=d(-\sum_{i=2}^{m} \left ( c_{i} \frac{1}{i t^{i - 1}} \right )- c_1\log |t|+h),
        $$

        \noindent and hence plugging on the equation above, proves that the one form $\beta$ is indeed exact.

        \begin{equation}
	        \beta=d{h}
	        \label{eq:exact}
        \end{equation}

        Now we are going to apply  Moser's trick. For that, we take the one-parameter family of forms for $0\leq s\leq 1$:

        \begin{equation}
	        \omega_s=-d\theta \wedge (\sum\limits_{i=1}^m c_i \frac{d t}{t^i})+\gamma_L+sd({h} d\theta).
	        \label{eq:fin}
        \end{equation}

        \noindent
        For $s=1$ this form is $\omega$, and for $s=0$ the simplified form (\ref{eq:dec}).
        In order to apply Moser's trick, we need to check that $\omega_s$ is a path of $b^m$-symplectic forms. Observe that for small $\epsilon$ on an $\epsilon$-neighbourhood, the first term of {(\ref{eq:fin})} is much larger than the third. So we conclude that the form {(\ref{eq:fin})} is $b^m$-symplectic and for all $s$. By construction, their class in $b^m$-cohomology coincides:

        $$
            [\omega_s]=[\omega_0].
        $$

        \noindent
        We are now ready to apply the $b^m$-Moser theorem (Theorem \ref{theorem:bmmoser}) to conclude that $\omega_0$ and $\omega_1$ are equivariantly $b^m$-symplectomorphic.

        A final remark: As done in Section 5 in \cite{GMWgeomq}, the $2$-form, $\gamma_L$, which restricts to the symplectic form along $L$ depends in principle on $t$.

        However, the inclusion map
        $$
            i: L\rightarrow L\times(-\epsilon, \epsilon),~p\rightarrow (p,0)
        $$
        and the projection map
        $$
            \pi: L\times (-\epsilon, \epsilon)\rightarrow L, (p, e)\rightarrow p
        $$
        \noindent
        induce isomorphisms on cohomology. In other words, {$[\gamma_L]=[\pi^* i^*\gamma_L]$}. Therefore, by using the Moser path method again, we can deform the (possibly $t$-dependent) form $\gamma_L$ to $\pi^* i^* \gamma_L$. Observe that this deformation can be done in an equivariant fashion by virtue of the equivariant Moser theorem for $b^m$-symplectic manifolds (Theorem \ref{theorem:bmmoser}). Thus we can assume that {$\gamma_L=\pi^* i^* \gamma_L$} and the new $\gamma_L$ is $G$-invariant and does no longer depend on $t$ and it is just a symplectic $2$-form on $L$. 

        The $G$-action preserves the $b^m$-symplectic structure on $(M, Z)$: $g \cdot \omega = \omega$ for any $g \in G$. We need to show that $f(g \cdot (\alpha, \beta) = g \cdot f(\alpha, \beta)) = g \cdot \omega = \omega$.
        The $G$-action splits into a direct product of $S^1$- and $H$-action where the $S^1$-action preserves cosymplectic structure on $Z$.

        In other words the following diagram commutes:
        \begin{align}
        \begin{CD}
            (M, Z) @>{G}>> (M, Z) \\
            @AA{f}A @AA{f}A \\
            Z @>{G}>> Z
        \end{CD}
        \end{align}{}
        This ends the proof of the lemma.
    \end{proof}
As the critical set of a $b^m$-symplectic manifold is cosymplectic,  we can use the following result from \cite{BKM}: 
    \begin{theorem}[Braddell, Kiesenhofer, Miranda]
        Let $Z$ be a cosymplectic manifold and suppose $Z$ has a transverse $S^1$-action preserving the cosymplectic structure. Then $Z$ has a finite cover $\Tilde{Z} := S^1 \times \mathcal L$, $\mathcal L$ a leaf of the foliation, equipped with an $S^1$ action given by translation in the first coordinate for which the projection $p : S^1 \times \mathcal L \to Z$ is equivariant. To get a cosymplectic structure on the cover, one lifts the associated defining one- and two-forms.The cosymplectic structure on $Z$ is given by the quotient of a cosymplectic structure on $\Tilde{Z} = S^1 \times \mathcal L$ by the action of a finite cyclic group $\mathbb Z_k$.
    \end{theorem}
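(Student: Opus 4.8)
The plan is to realize $Z$ as the quotient of a product $S^1\times\mathcal{L}$ by an explicit free action of a finite cyclic group, using the flow of the circle action to sweep out all of $Z$ from a single leaf. Let $(\alpha,\beta)$ denote the defining closed one- and two-forms of the cosymplectic structure, so that $\ker\alpha$ is an integrable codimension-one distribution whose leaves carry the symplectic form $\beta$, and let $X$ be the fundamental vector field of the $S^1$-action, with flow $\phi_t$. Transversality to the symplectic foliation means $\alpha(X)\neq 0$ everywhere, and invariance of the structure gives $L_X\alpha=0$ and $L_X\beta=0$.

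First I would show that $\alpha(X)$ is a nonzero constant $c$: by Cartan's formula and $d\alpha=0$ one has $L_X\alpha=d(\alpha(X))=0$, so $\alpha(X)$ is locally constant, hence a nonzero constant on the connected manifold $Z$. In particular $X$ is nowhere vanishing, the $S^1$-action is locally free, and all stabilizers are finite cyclic. Since $\phi_t^*\alpha=\alpha$, the flow permutes the leaves of $\ker\alpha$, carrying each leaf symplectomorphically onto another leaf.

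Next, fix a leaf $\mathcal{L}$ and define
$$\bar F : S^1\times\mathcal{L}\longrightarrow Z, \qquad \bar F(t,p)=\phi_t(p),$$
which is well defined because $\phi_T=\mathrm{id}$, where $T$ is the period of the action. Transversality makes $\bar F$ a local diffeomorphism, and $\bar F(t+s,p)=\phi_s(\bar F(t,p))$ shows it intertwines the translation $S^1$-action on the first factor with the given action on $Z$. The setwise stabilizer $\Gamma=\{t\in S^1:\phi_t(\mathcal{L})=\mathcal{L}\}$ is discrete --- otherwise $X$ would be tangent to $\mathcal{L}\subset\ker\alpha$, contradicting $\alpha(X)=c\neq 0$ --- hence finite cyclic, $\Gamma\cong\mathbb{Z}_k$ with generator $T/k$. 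Writing $\psi=\phi_{T/k}|_{\mathcal{L}}$, a symplectomorphism of $(\mathcal{L},\beta)$ with $\psi^k=\mathrm{id}$, I would check that $\bar F(t,p)=\bar F(t',p')$ forces $t-t'\in\Gamma$ and $p'=\psi^{j}(p)$; thus the fibres of $\bar F$ are exactly the orbits of the \emph{free} $\mathbb{Z}_k$-action
$$j\cdot(t,p)=\bigl(t-\tfrac{jT}{k},\,\psi^{j}(p)\bigr),$$
which commutes with the translation action because the $S^1$-component acts by translation. Consequently $\bar F$ descends to a bijective local diffeomorphism $(S^1\times\mathcal{L})/\mathbb{Z}_k\xrightarrow{\sim}Z$, exhibiting $\bar F$ as the desired $k$-fold equivariant covering. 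Finally, pulling back $\alpha$ and $\beta$ along $\bar F$ yields a $\mathbb{Z}_k$-invariant cosymplectic structure on $S^1\times\mathcal{L}$ (indeed $\bar F^*\alpha=c\,dt$ and $\bar F^*\beta$ restricts to the leafwise symplectic form), whose quotient recovers the structure on $Z$.

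The hard part will be establishing that $\bar F$ is \emph{surjective}, equivalently that the $\phi_t$-saturation of $\mathcal{L}$ is all of $Z$. Its image $U$ is open (local diffeomorphism) and $S^1$-saturated; the key is to argue it is also closed, so that connectedness of $Z$ forces $U=Z$. For this I would use a flow box adapted to $X$: near a boundary point $q\in\partial U$ one has local coordinates in which $X=\partial_s$ and the leaves are level sets of $s$, so any $q'\in U$ near $q$ can be reached from $q$ by a short flow within the box; since $U$ is flow-invariant and contains the \emph{entire} leaf through $q'$, this places $q$ in $U$. Once surjectivity is secured, the remaining verifications (finiteness of $\Gamma$, freeness of the $\mathbb{Z}_k$-action, equivariance, and descent of the forms) are routine.
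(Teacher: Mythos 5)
Your overall strategy --- sweeping a fixed leaf around by the flow, showing that $\bar F(t,p)=\phi_t(p)$ is an equivariant surjective local diffeomorphism, and exhibiting $Z$ as the quotient of $S^1\times\mathcal{L}$ by the deck group --- is the right one (it is the approach of the cited source \cite{BKM}; the present paper only quotes the result without proof), and your treatment of the constancy of $\alpha(X)$, of equivariance, and of surjectivity via the flow-box open--closed argument is correct and even more careful than usual. The genuine gap is the step ``the setwise stabilizer $\Gamma=\{t\in S^1:\phi_t(\mathcal{L})=\mathcal{L}\}$ is discrete, otherwise $X$ would be tangent to $\mathcal{L}$.'' Tangency of $X$ to $\mathcal{L}$ only follows if $\Gamma$ contains a neighbourhood of $0$ (equivalently $\Gamma=S^1$): then the curves $t\mapsto\phi_t(p)$, $p\in\mathcal{L}$, lie in $\mathcal{L}$ and must stay in a single plaque. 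But a subgroup of $S^1$ that is not finite cyclic need not contain an interval --- it can be countable and dense --- and your argument says nothing against that case. This case genuinely occurs: take $Z=\mathbb{T}^3$ with coordinates $(x,y,z)$, $\alpha=dz-\lambda\,dx$ with $\lambda$ irrational, $\beta=dx\wedge dy$, and the $S^1$-action rotating $z$. This is a cosymplectic structure, the action preserves it and is transverse since $\alpha(\partial_z)=1$, yet every leaf is a dense noncompact cylinder, $\Gamma=\{\lambda m \bmod 1:\ m\in\mathbb{Z}\}\cong\mathbb{Z}$ is dense in $S^1$, your map $\bar F$ is an \emph{infinite} cyclic covering, and no finite cover of $\mathbb{T}^3$ can have the form $S^1\times\mathcal{L}$ (finite covers of compact manifolds are compact). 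So the finiteness of the deck group is not merely unproved in your argument; it is unprovable from the hypotheses as literally stated.

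The statement becomes true --- and your proof goes through essentially verbatim --- once one adds the hypothesis under which it is actually invoked in \cite{BKM} and in this paper (cf.\ Definition \ref{mappingtorusstructure} and the sentence preceding the theorem, where $Z$ is a compact mapping torus): the leaf $\mathcal{L}$ should be compact, or at least a closed subset of $Z$. Then $\Gamma$ is a \emph{closed} subgroup of $S^1$: if $t_n\in\Gamma$ and $t_n\to t$, then for each $p\in\mathcal{L}$ one has $\phi_{t_n}(p)\to\phi_t(p)$, and closedness of $\mathcal{L}$ gives $\phi_t(\mathcal{L})\subseteq\mathcal{L}$; applying the same to $-t_n\to -t$ yields $\phi_t(\mathcal{L})=\mathcal{L}$, so $t\in\Gamma$. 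A closed subgroup of $S^1$ is either finite cyclic or all of $S^1$, and your tangency argument correctly excludes the latter. With that repair, the remaining steps you carry out (identification of the fibres of $\bar F$ with the orbits of the free $\mathbb{Z}_k$-action, commutation of that action with the translations, and descent of $\bar F^*\alpha=c\,dt$ and of $\bar F^*\beta$) are all correct.
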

   
   Now we are ready to prove the $b^m$-symplectic version of the slice theorem.  This describes a neighbourhood of the orbit in terms of the twisted cotangent models described in Section \ref{sec:bmCotL}.     
    \begin{theorem}[A $b^m$-slice theorem] \label{th:bmHsl}
        Let $G$ be a compact group acting on a $b^m$-symplectic manifold $(M, Z, \omega)$ by  $b^m$-symplectomorphisms such that the highest modular weight is non-vanishing. Let $\mathcal{O}_z$ be an orbit of the group contained in the critical set of $M$. Then there is a neighbourhood of the zero section of an associated bundle $^{b^m}T^* G \times_{H_z \times \mathbb{Z_d}} V_z$ equipped with the $b^m$-symplectic model 
        $$
            \omega = \sum\limits_{i=1}^m c_i \frac{d t}{t^i} \wedge d \theta + \pi^*(\omega_{MGS}),
        $$
        where $t$ is a defining function for $Z$, $\pi$ is the projection $\pi: T^* S^1 \times T^* H \times_{H_z} V_z \to T^* H \times_{H_z} V_z$ and $\omega_{MGS}$ is the symplectic form on $T^* H \times_{H_z} V_z$ given by the symplectic slice theorem.
    
        The moment map for such action is given by
        $$
            \mu = c_1 \log |t| + \sum \limits_{i = 1}^{m - 1} c_{i + 1}\frac{t^{- i}}{i} + \mu_0(x, y).
        $$
        and thus can be recovered in terms of \emph{the twisted $b^m$ cotangent models} associated to the product $S^1\times H$.
    \end{theorem}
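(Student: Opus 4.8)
The plan is to assemble the $b^m$-symplectic model from three independent ingredients — the transverse $S^1$-direction, handled by the twisted $b^m$-cotangent lift; the symplectic leaf, handled by the classical Guillemin-Sternberg-Marle slice theorem; and the finite cyclic cover coming from the mapping torus structure of $Z$ — and then to glue the resulting model to a neighbourhood of $\mathcal{O}_z$ by an equivariant $b^m$-Moser argument. First I would reduce to the trivial mapping torus case. By the Braddell-Kiesenhofer-Miranda structure theorem (Theorem \ref{th:bmAction}), the compact group splits, up to a finite group, as $G = S^1 \times H$, where $S^1$ acts transversally to the symplectic foliation of $Z$ and $H$ preserves a symplectic leaf $L$. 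The cosymplectic cover theorem then supplies a finite cover $\tilde{Z} = S^1 \times \mathcal{L}$ on which $S^1$ acts by translation in the first factor, with $Z = \tilde{Z}/\mathbb{Z}_d$. Passing to this cover lets me assume $Z = S^1 \times L$ is a trivial mapping torus, and it records the cyclic group $\mathbb{Z}_d$ as the extra factor appearing in the associated bundle $^{b^m}T^* G \times_{H_z \times \mathbb{Z}_d} V_z$.

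Next I would invoke the key lemma (Lemma \ref{keylemma}) to place $\omega$, \emph{equivariantly}, in the decoupled normal form $\omega = -d\theta \wedge \left(\sum_{i=1}^m c_i\, dt/t^i\right) + \gamma_L$ on $Z \times (-\epsilon, \epsilon)$, where $\gamma_L$ is the $t$-independent, $G$-invariant symplectic form on $L$. The non-vanishing of the highest modular weight is exactly what guarantees $c_m \neq 0$, so the leading $b^m$-term dominates on a sufficiently small neighbourhood and the decomposition is genuinely $b^m$-symplectic. This splitting reduces the normal-form problem to two uncoupled pieces: a purely symplectic model along $L$ and a purely $b^m$ model along the transverse $S^1$.

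On the symplectic leaf $L$ with the Hamiltonian $H$-action, I would apply the Guillemin-Sternberg-Marle symplectic slice theorem (Theorem \ref{Th:SSl}) to identify a neighbourhood of the $H$-orbit with a neighbourhood of the zero section of $T^* H \times_{H_z} V_z$, carrying the model symplectic form $\omega_H$ and the slice moment map $\mu_0(x,y)$. For the transverse $S^1$-factor I would use the twisted $b^m$-cotangent lift of Section \ref{sec:bmCotL}, which furnishes the model form $\sum_{i=1}^m (\tilde{c}_i/t^i)\, dt \wedge d\theta$ on $^{b^m}T^* S^1$ together with its explicit moment map $c_1 \log|t| + \sum_{i=1}^{m-1} c_{i+1}\, t^{-i}/i$. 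Taking the product of these two models and descending through the $\mathbb{Z}_d$-action produces the bundle $^{b^m}T^* G \times_{H_z \times \mathbb{Z}_d} V_z$ with precisely the claimed form $\omega = \sum_{i=1}^m c_i \frac{dt}{t^i} \wedge d\theta + \pi^*(\omega_H)$; since the form is a direct sum, the moment map for the product action is the sum $c_1 \log|t| + \sum_{i=1}^{m-1} c_{i+1}\frac{t^{-i}}{i} + \mu_0(x,y)$ of the two contributions, as asserted.

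Finally I would verify the identification with a neighbourhood of $\mathcal{O}_z$. The decoupled model and the genuine form agree to leading order along $Z$, induce the same corank-one cosymplectic structure on $Z$, and carry the same modular data; arranging them to represent the same $b^m$-cohomology class, the equivariant $b^m$-Moser theorem (Theorem \ref{theorem:bmmoser}) yields a $G$-equivariant $b^m$-symplectomorphism from the model onto a neighbourhood of the orbit, sending the zero section to $\mathcal{O}_z$. The hardest step will be the bookkeeping around the finite cyclic cover: making the product model $H_z \times \mathbb{Z}_d$-equivariant and checking that the $\mathbb{Z}_d$-quotient is simultaneously compatible with the symplectic slice along $L$ and the $b^m$-cotangent lift along $S^1$, so that the descended form and moment map are globally well defined on $^{b^m}T^* G \times_{H_z \times \mathbb{Z}_d} V_z$.
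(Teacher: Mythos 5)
Your proposal is correct and follows essentially the same route as the paper's proof: passage to the finite cover where $S^1 \times H$ acts freely, the equivariant splitting of $\omega$ via Lemma \ref{keylemma}, the symplectic slice theorem (Theorem \ref{Th:SSl}) on the leaf combined with the twisted $b^m$-cotangent lift for the transverse $S^1$-direction, and descent through the $\mathbb{Z}_d$-action to the associated bundle. The only cosmetic difference is that you spell out a final equivariant $b^m$-Moser gluing step as a separate stage, whereas in the paper that Moser argument is already absorbed into the proof of Lemma \ref{keylemma}.
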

  
    \begin{remark}
        Below, we prove the $b^m$-symplectic slice theorem when the group action is $b^m$-Hamiltonian. Nevertheless, the statement holds for actions that preserve the $b^m$-symplectic structures.  This would be a particular type of quasi-Hamiltonian structure. The proof in this set-up can be found in the section \ref{sec:qHam}. 
    \end{remark}
    \begin{remark}
        The slice theorem for $b$-symplectic manifolds has been investigated by the second author of this article in \cite{BKM}. In this article, we give a new proof for $b^m$-symplectic manifolds, which differs from the one contained in \cite{BKM}. Also in this article we characterize the action in terms of the twisted  cotangent models in Section \ref{sec:bmCotL}.  This way to determine the normal form theorem is brand-new.
   \end{remark}
   
    \begin{proof}
        Without loss of generality, since the isotropy group $\Gamma$ is discrete, we can pass from the action of $(H \times S^1) / \Gamma$ to the free action of $H \times S^1$ on the finite cover of $(M, Z)$ and then we apply equivariance as in the previous theorem to conclude. 
        In view of the Lemma \ref{keylemma} the form $\omega$ splits into two parts, where $\alpha = - d \theta \wedge (\sum\limits_{i=1}^m c_i \frac{d t}{t^i})$ is a  $b^m$-symplectic form on $S^1 \times (-\epsilon, \epsilon)$ and $\beta$ is  the symplectic form on the leaf $\mathcal L$. 
        First, we consider the Hamiltonian action of $H$ separately. It is a Hamiltonian induced on the leaves on $Z$. Thus one can apply the symplectic slice theorem (\ref{Th:SSl}) and there is an $H$-equivariant neighbourhood $U_H$ of $\mathcal{O}^H_p$ which is equivariantly symplectomorphic to $T^* H \times_{H_p} V_p$ with the symplectic form $\omega_{MGS}$ on $T^* H \times_{H_p} V_p$.
            
        Consider the $b^m$-symplectic form on $T^* S^1 \times T^* H \times_{H_p} V_p$ given by
        $$
            \omega = \sum_{i = 1}^m c_i \frac{dt}{t^i} \wedge d \theta + \omega_{MGS},
        $$
        where $t$ is a defining function for $Z$.
        
        Take the quotient $b^m$-Poisson structure on $T^* (S^1 \times H) \times_{H_p \times \mathbb Z_d} V_p$ where $\mathbb Z_d$ acts on $T^* S^1$ as the twisted $b^m$-cotangent lift  of $\mathbb Z_d$ acting by translations on $S^1$ and by linear symplectomorphisms on $V_p$ and $H_p$ acts on $T^* H$ by the cotangent lift of $H_p$ acting on $H$ by translations and by linear symplectomorphisms on $V_p$.
        
        The last step of the proof is to do the projection from the universal cover of $M$ back onto the base. For that, we use that we can assume that the \emph{linear} symplectic form $\omega_{MGS}$ on $\mathcal L$ is invariant as proved in Lemma \ref{keylemma}. This ends the proof of the theorem.
    \end{proof}
    
    \begin{remark}
        We call a \textbf{normal form for the slice} the collection of $b$-manifold $(M, Z)$, associated bundle $^{b^m}T^* G \times_{(H_z \times \mathbb{Z}_d)} V_z$, $b^m$-symplectic model $\omega = \sum_1^m c_i \frac{d t}{t^i} \wedge d \theta + \pi^*(\omega_{MGS})$, and the group action $\rho$ as described in Theorem \ref{th:bmHsl}, which is linear on the slice. We denote it as a triple $(^{b^m}T^{*} G \times_{(H_z \times \mathbb{Z}_d)} V_z, \omega, \rho)$.
    \end{remark}
    
\subsection{Desingularization and slices} \label{sec:DesingSlice}
    
    We can now compare the $b^m$-symplectic slice theorem with its symplectic analogue. This will be needed to prove that desingularization commutes with reduction.
    
    Let us recall how to construct \emph{ the desingularization function} as done in \cite{GMW}.
    
    \begin{definition}
        Let $(S,Z, x)$, be a $b^{2k}$-manifold, where $S$ is a closed orientable manifold and let $\omega$ be a $b^{2k}$-symplectic form. Consider the decomposition given by the expression \ref{eq:b2kDecomp} on an $\varepsilon$-tubular neighborhood $U_\varepsilon$ of a connected component of $Z$.
        
        Let $f \in \mathcal{C}^\infty (\mathbb R)$ be an odd smooth function satisfying $f'(x) > 0$ for all $x \in [-1, 1]$ and satisfying outside that 
        \begin{equation}
            f(x) =
            \begin{cases}
                \frac{-1}{(2k-1)x^{2k -1}} - 2 \phantom{a} \mathrm{ for} \phantom{a} x < -1\\
                \frac{-1}{(2k-1)x^{2k -1}} + 2 \phantom{a} \mathrm{ for } \phantom{a} x > 1
            \end{cases}\,.
        \end{equation}
        Let $f_\varepsilon$ be defined as $\varepsilon^{-(2k - 1)} f(x/\varepsilon)$.
        
        The $f_\varepsilon$-\textbf{desingularization} $\omega_\varepsilon$ is a form that is defined on $U_\varepsilon$ by the following expression:
        \begin{equation}
        \label{eq:b2kDesing}
            \omega_\varepsilon = d f_\varepsilon \wedge \left ( \sum \limits_{i = 1}^{2k} x^i \alpha_ i \right ) + \beta.
        \end{equation}
    \end{definition}
    As $\omega_\varepsilon$ can be trivially extended to the whole manifold $S$ so that it coincides with $\omega$ outside $U_\varepsilon$, we further refer to it as a form on $S$.
    
    \begin{definition}
        Let $(S,Z, x)$, be a $b^{2k + 1}$-manifold, where $S$ is a closed orientable manifold and let $\omega$ be a $b^{2k + 1}$-symplectic form. Consider the decomposition given by the expression \ref{eq:b2kDecomp} on an $\varepsilon$-tubular neighborhood $U_\varepsilon$ of a connected component of $Z$.
        
        Let $f \in \mathcal{C}^\infty (\mathbb R)$ be a positive even smooth function satisfying $f'(x) > 0$ for $x < 0$ and $f(x) = - x^2 + 2$ if $x \in [-1, 1]$ satisfying outside $[-2, 2]$ that 
        \begin{equation}
            f(x) =
            \begin{cases}
                \frac{-1}{(2k+2)x^{2k +2}} \phantom{a} \mathrm{ for} \phantom{a} k > 0 \\
                \log(|x|) \phantom{a} \mathrm{ for } \phantom{a} k = 0
            \end{cases}\,.
        \end{equation}
        Let $f_\varepsilon$ be defined as $\varepsilon^{-(2k)} f(x/\varepsilon)$.
        
        The $f_\varepsilon$-\textbf{desingularization} $\omega_\varepsilon$ is a form that is defined on $U_\varepsilon$ by the following expression:
        \begin{equation}
            \omega_\varepsilon = d f_\varepsilon \wedge \left ( \sum \limits_{i = 0}^{2k} \pi^*(\alpha_i) x^i \right ) + \beta, 
            \label{eq:desingFold}
        \end{equation}
        where $\pi: U \to Z$ is the projection. 
    \end{definition}
    
    Notice that in both odd and even cases, the desingularization function can be chosen invariant under the group action, as the following lemma proves:
    \begin{lemma}
        \label{le:Haar}
         Given any desingularization function $f_\varepsilon$ one can always find an invariant desingularization $f_\varepsilon^G$ by averaging over the group action
         $$
            f_\varepsilon^G=\int_G f_\varepsilon d\mu,
        $$
        for $\mu$ a Haar measure.
    \end{lemma}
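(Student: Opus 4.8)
The plan is to run the standard Haar-averaging construction and then verify that averaging destroys neither of the two defining features of a desingularization function: its prescribed asymptotic (Laurent) behaviour away from $Z$ and the monotonicity that guarantees non-degeneracy of $\omega_\varepsilon$. First I would normalize the Haar measure so that $\mu(G)=1$, which is possible because $G$ is compact, and set
$$
f_\varepsilon^G(p)=\int_G f_\varepsilon(\rho_g(p))\, d\mu(g).
$$
Smoothness of $f_\varepsilon^G$ is immediate, since the integrand is smooth in $(g,p)$ and $G$ is compact, so one may differentiate under the integral sign. Invariance is the usual computation: for $h\in G$ one has $f_\varepsilon^G(\rho_h(p))=\int_G f_\varepsilon(\rho_{gh}(p))\,d\mu(g)=f_\varepsilon^G(p)$ after the substitution $g\mapsto gh$, using that the Haar measure of a compact group is bi-invariant.

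The substantive point is that $f_\varepsilon^G$ remains a desingularization function. Since $G$ acts by $b$-maps it preserves $Z$, and because $\omega$ is $G$-invariant I may average a Riemannian metric to obtain a $G$-invariant tubular neighbourhood, so that the $\varepsilon$-neighbourhood $U_\varepsilon$, its complement, and the auxiliary Laurent data $\alpha_i,\beta$ appearing in \eqref{eq:b2kDesing} and \eqref{eq:desingFold} may all be taken $G$-invariant. The cleanest way to see the conclusion is to average the forms rather than the functions: each $\rho_g^\ast\omega_\varepsilon = d(\rho_g^\ast f_\varepsilon)\wedge\big(\sum_i x^i\alpha_i\big)+\beta$ is again a legitimate desingularization of $\omega$, since $\rho_g$ preserves both $\omega$ and $Z$, and the conditions defining such a desingularization are \emph{convex}. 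Indeed, the asymptotic matching outside $U_\varepsilon$ is an equality constraint, hence preserved by integration against a probability measure, while the monotonicity is preserved because $(f_\varepsilon^G)'=\int_G(\rho_g^\ast f_\varepsilon)'\,d\mu>0$ as an integral of strictly positive functions against a positive measure of total mass one; the relevant parity condition (oddness in the even case, evenness in the odd case) is linear and likewise survives. Integrating, $\int_G\rho_g^\ast\omega_\varepsilon\,d\mu=df_\varepsilon^G\wedge\big(\sum_i x^i\alpha_i\big)+\beta$ is a $G$-invariant desingularizing form, and it is manifestly associated to the averaged function $f_\varepsilon^G$.

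The main obstacle is exactly this compatibility of averaging with the pointwise constraints: a priori the average is a function on a neighbourhood of $Z$ rather than a function of the single transverse variable, so the monotonicity and matching conditions only become meaningful once the action is adapted to an invariant collar. Producing that $G$-invariant collar and invariant Laurent data (via the averaged metric and the $G$-invariance of $\omega$) is what legitimizes the convexity argument; once it is in place, non-degeneracy of $df_\varepsilon^G\wedge\big(\sum_i x^i\alpha_i\big)+\beta$ follows exactly as for $f_\varepsilon$, because near $Z$ the leading term $df_\varepsilon^G$ dominates just as $df_\varepsilon$ did. This yields the invariant desingularization $f_\varepsilon^G$ claimed in the lemma.
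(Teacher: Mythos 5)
The paper offers no proof of Lemma \ref{le:Haar} at all --- it is stated bare, as a standard compact-group averaging fact --- so the comparison is really about whether your verification stands on its own. Much of it does, and your architecture is certainly the intended one: normalized Haar measure, smoothness by differentiating under the integral, invariance via bi-invariance, an invariant metric giving a $G$-invariant neighbourhood $U_\varepsilon$ (so that every $\rho_g^*\omega_\varepsilon$, and hence the average, agrees with $\omega$ outside $U_\varepsilon$), and --- most importantly --- the correct identification of the crux, namely that $f_\varepsilon^G$ is a function on a neighbourhood of $Z$ rather than of the transverse variable alone.

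However, the two justifications you give for the crucial non-degeneracy step are wrong as stated. First, non-degeneracy of $2$-forms is \emph{not} a convex condition: an average of symplectic forms can perfectly well be degenerate, so ``each $\rho_g^*\omega_\varepsilon$ is a desingularization and the defining conditions are convex'' proves nothing about $\int_G\rho_g^*\omega_\varepsilon\,d\mu$. What your setup actually reduces the problem to is this: with invariant Laurent data the average is $df_\varepsilon^G\wedge\eta+\beta$, $\eta=\sum_i x^i\alpha_i$, and since $(df_\varepsilon^G\wedge\eta)^2=0$ its top power is $n\,df_\varepsilon^G\wedge\eta\wedge\beta^{n-1}+\beta^n$. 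Because $f_\varepsilon^G$ is not a function of $x$ alone, $df_\varepsilon^G=(\partial_x f_\varepsilon^G)\,dx+d_Zf_\varepsilon^G$, and your positivity argument controls only the first summand. The missing estimate is that the tangential part cannot destroy the volume form: since $x\circ\rho_g$ vanishes identically on $Z$, each $d(x\circ\rho_g)$ annihilates $TZ$ along $Z$, so $|d_Z(x\circ\rho_g)|\leq C|x|$ on $U_\varepsilon$ and hence $d_Zf_\varepsilon^G$ is pointwise $O(\varepsilon)$ relative to $\partial_xf_\varepsilon^G$; only with this in hand does ``the leading term dominates'' become a proof, valid for $\varepsilon$ small. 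Second, the parity claim is false: $f_\varepsilon\circ x\circ\rho_g$ is odd (resp.\ even) in the transverse variable only if $\rho_g$ intertwines the reflection $x\mapsto-x$, which nothing guarantees, so linearity of the parity condition is beside the point. Parity is only a normalization in the Guillemin--Miranda--Weitsman construction, so this does not sink the lemma, but it does mean the honest conclusion is that averaging produces a $G$-invariant desingularized \emph{form} (symplectic for even $m$, folded for odd $m$, equal to $\omega$ outside $U_\varepsilon$), not a desingularization function in the strict one-variable sense of the definition. Finally, beware a latent triviality in your reduction: once an invariant collar, invariant transverse coordinate and invariant Laurent data are in place, any $f_\varepsilon$ built from that coordinate is already invariant and averaging does nothing; the lemma has content exactly when $f_\varepsilon$ comes from non-invariant data, and that is precisely the situation in which the estimate above, rather than convexity, carries the proof.
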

    
    As an immediate consequence of Lemma \ref{le:Haar} the $b^m$-symplectic slice is the symplectic slice of the symplectic slice theorem applied to the desingularized form.  In the odd case, we can apply the desingularization procedure to prove a \emph{ folded symplectic slice theorem}. 
    
    \begin{theorem}[Desingularization of normal form, even case]
        Given a $b^m$-symplectic manifold $(M, Z, \omega)$ we consider an orbit of a point $p$ under the $G$-action $\rho$. Let $(^{b^m}T^* G \times_{H_z \times \mathbb{Z}_d} V_z, \omega, \rho)$ be a normal form for the $b^m$-slice. The $b^m$-symplectic slice $V_p$ is also symplectic slice for the desingularized symplectic structure and $G$-action with respect to the invariant desingularization function $f_\varepsilon^G$. Then the triple $(^{b^m}T^* G \times_{H_z \times \mathbb{Z}_d} V_z, \omega_\varepsilon, \rho)$ is the normal form for desingularized action where
        $$
        \omega_\varepsilon = d f_\varepsilon^G \wedge \left ( \sum \limits_{i = 1}^{2k} x^i \alpha_ i \right ) + \beta
        $$ 
        defines symplectic structure in the neighbourhood of zero section of the associated bundle $T^*G \times_G V_p$.
    \end{theorem}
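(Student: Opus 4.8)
The plan is to exploit the fact that, in the even case, desingularization acts only along the singular normal direction and leaves all of the symplectic slice data untouched. First I would invoke the $b^m$-symplectic slice theorem (Theorem \ref{th:bmHsl}) to put $\omega$ in the normal form $\omega = \sum_{i=1}^{2k} c_i \frac{dt}{t^i}\wedge d\theta + \pi^*(\omega_H)$ on a neighbourhood of the zero section of $^{b^m}T^* G \times_{H_z \times \mathbb{Z}_d} V_z$, where $\pi^*(\omega_H)$ records the symplectic slice $V_p$ furnished by Theorem \ref{Th:SSl} and Lemma \ref{keylemma}. The structural observation driving everything is that the singular term $\sum c_i \frac{dt}{t^i}\wedge d\theta$ is supported entirely on the $^{b^m}T^*S^1$ factor and is independent of the slice coordinates, so in the Laurent decomposition \ref{eq:b2kDecomp} it is precisely the singular part $\frac{dt}{t^{2k}}\wedge \sum x^i\alpha_i$ (with $d\theta$ absorbed into the top form $\alpha_{2k}$, which is nowhere vanishing), while $\pi^*(\omega_H)$ plays the role of the regular remainder $\beta$.

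Next I would apply Lemma \ref{le:Haar} to replace the desingularizing function $f_\varepsilon$ by its $G$-average $f_\varepsilon^G$, and then form $\omega_\varepsilon = d f_\varepsilon^G \wedge \left(\sum_{i=1}^{2k} x^i \alpha_i\right) + \beta$ as in \ref{eq:b2kDesing}. The point of averaging is that $f_\varepsilon^G$ remains a legitimate desingularizer (the defining monotonicity and the prescribed asymptotics survive averaging over the compact group) while becoming $G$-invariant; combined with the $G$-invariance of $\gamma_L=\beta=\pi^*(\omega_H)$ established in Lemma \ref{keylemma}, this guarantees that $\rho$ preserves $\omega_\varepsilon$. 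I would then record that the Hamiltonian structure persists: the desingularized moment map is obtained from the $b^m$-moment map of Theorem \ref{th:bmHsl} by replacing its singular part $c_1\log|t| + \sum_{i=1}^{m-1} c_{i+1} t^{-i}/i$ with $f_\varepsilon^G$ and retaining the slice contribution $\mu_0(x,y)$.

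I would then verify that $\omega_\varepsilon$ is a genuine symplectic form near the zero section. Closedness is immediate, since $d f_\varepsilon^G$ is exact and the $\alpha_i$ and $\beta$ are closed. For non-degeneracy I would argue blockwise: outside the $\varepsilon$-neighbourhood of $Z$ the form equals $\omega$ and is non-degenerate, while across $Z$ the surviving condition $f_\varepsilon' > 0$ makes the term $d f_\varepsilon^G \wedge \alpha_{2k}$ of maximal rank in the $(t,\theta)$-plane, and $\beta=\pi^*(\omega_H)$ stays non-degenerate on the transverse slice directions by the symplectic slice theorem; transversality of the two blocks yields maximal rank for their sum. This is exactly the even-case assertion of Theorem \ref{th:deblog}, now upgraded to be equivariant and compatible with the slice splitting.

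Finally, because desingularization modifies neither the induced $H$-action on the leaf nor the linear slice representation on $V_p$, the symplectic slice of $\omega_\varepsilon$ at $p$ is again $V_p$, and the associated bundle $^{b^m}T^* G \times_{H_z \times \mathbb{Z}_d} V_z$ becomes, once the $^{b^m}T^*S^1$ factor is desingularized into the ordinary $T^*S^1$, precisely the Guillemin--Sternberg model $T^*G \times_G V_p$; hence the triple $(^{b^m}T^* G \times_{H_z \times \mathbb{Z}_d} V_z, \omega_\varepsilon, \rho)$ is a normal form for the desingularized action in the sense of Theorem \ref{Th:SSl}. The hard part will be the non-degeneracy check near $Z$ together with the verification that $G$-averaging preserves the monotonicity and asymptotic conditions making $f_\varepsilon^G$ a valid desingularizer without disturbing the invariant slice form; once this is secured, the identification of the two slices and of the two bundle models is routine bookkeeping.
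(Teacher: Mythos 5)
Your proposal is correct and takes essentially the same route as the paper: the paper deduces this theorem directly from Lemma \ref{le:Haar} (the Haar-averaged invariant desingularization $f_\varepsilon^G$), combined with the observation that desingularization only alters the singular $(t,\theta)$-block of the slice normal form and leaves the slice data $V_p$ and $\pi^*(\omega_H)$ untouched. The paper states this as an ``immediate consequence'' with no further argument, so your blockwise non-degeneracy check and your verification that averaging preserves the monotonicity and asymptotics of the desingularizing function simply make explicit the details the paper leaves implicit.
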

    
    For the folded symplectic case, we provide a weaker statement. A general folded symplectic slice theorem is not written in the literature to the authors' knowledge. Though, for the case of folded symplectic manifolds, which an invariant desingularization procedure can obtain, the $b^m$-symplectic slice theorem yields a  folded symplectic slice theorem. In this theorem, the slice $V_p$ remains the same, and the corresponding folded symplectic form is given by the formula \ref{eq:desingFold}. Moreover, due to the Lemma \ref{le:Haar}, this is true for any desingularization as it can always be done in an invariant way.
    
    In particular, this proves:
    \begin{cor}
        Let $(M, \omega_\varepsilon)$ be a folded symplectic manifold with fold $Z$ whose form can be seen as a desingularization of a $b^{2k + 1}$-symplectic form on   $(M, Z, \omega)$. Let $G$ be a Lie group that acts on $M$ and preserves the critical hypersurface $Z$. Then the folded symplectic slice $V_p$ is given by the $b^m$-symplectic slice theorem, and the corresponding folded symplectic form can be written as
        $$
             \omega_\varepsilon = d f_\varepsilon^G \wedge \left ( \sum \limits_{i = 0}^{2k} \pi^*(\alpha_i) x^i \right ) + \beta.
        $$
    \end{cor}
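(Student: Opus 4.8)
The plan is to obtain the corollary as the odd-degree ($m = 2k+1$) specialization of the desingularized normal form, exploiting that desingularization is a \emph{local} modification in the direction normal to $Z$ that can be carried out equivariantly. First I would apply the $b^m$-symplectic slice theorem (Theorem \ref{th:bmHsl}) with $m = 2k+1$ at the point $p \in Z$: this places $\omega$ in the model form on the associated bundle $^{b^m}T^* G \times_{H_z \times \mathbb{Z}_d} V_z$, identifies the slice $V_z$, and records that the $G$-action is linear on the slice and preserves the (singular) structure. The transverse symplectic data is carried by the term $\pi^*(\omega_H)$, while all the singular behaviour is concentrated in the normal factor $\sum_i c_i \frac{dt}{t^i} \wedge d\theta$.

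Next I would invoke Lemma \ref{le:Haar} to replace the desingularizing function by its Haar average $f_\varepsilon^G$, which is again an admissible desingularizing function and is now $G$-invariant; by the remark preceding the corollary this is no loss of generality, since every desingularization can be averaged into an invariant one. Substituting $f_\varepsilon^G$ into the odd-case desingularization formula \ref{eq:desingFold}, applied to the Laurent decomposition \ref{eq:Laurent} of the normal-form structure, yields
$$
\omega_\varepsilon = d f_\varepsilon^G \wedge \left( \sum_{i = 0}^{2k} \pi^*(\alpha_i)\, x^i \right) + \beta,
$$
which by Theorem \ref{th:deblog} (odd case) is a folded symplectic form agreeing with $\omega$ outside an $\varepsilon$-neighbourhood of $Z$. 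The crucial observation is that this desingularization touches only the normal/defining-function direction — replacing the singular factor by the smooth $d f_\varepsilon^G$ — while leaving the slice factor $V_z$ and the $\pi^*(\omega_H)$ term untouched. Since $f_\varepsilon^G$ is $G$-invariant, $\omega_\varepsilon$ is $G$-invariant and the fundamental vector fields of the action are unchanged, so the slice decomposition of Theorem \ref{th:bmHsl} survives verbatim and the folded slice at $p$ is exactly the $V_p$ furnished there.

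I expect the genuine obstacle to be \emph{conceptual rather than computational}: there is no intrinsic folded symplectic slice theorem in the literature to appeal to, so the meaning of ``folded symplectic slice'' must be pinned down and shown to be realized by $V_p$. The cleanest route is not to prove such a theorem from scratch but to transport the slice structure across the desingularization: because $\omega_\varepsilon$ coincides with the smooth symplectic model away from $Z$, and its restriction to the fold $Z$ has maximal rank, the transverse directions recorded by $V_p$ do constitute a bona fide folded slice. The one point needing care is verifying the fold transversality in the model — that $(\omega_\varepsilon)^n$ meets the zero section cleanly — but this is exactly what Theorem \ref{th:deblog} guarantees for the explicit invariant function $f_\varepsilon^G$, so the corollary follows.
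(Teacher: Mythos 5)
Your proposal matches the paper's own argument essentially verbatim: the paper likewise obtains the corollary by combining the $b^m$-symplectic slice theorem (Theorem \ref{th:bmHsl}) with Lemma \ref{le:Haar} (Haar-averaged invariant desingularization) and the odd-case formula \ref{eq:desingFold}, noting that the desingularization only alters the normal direction so the slice $V_p$ is unchanged. Your closing remarks — that no intrinsic folded symplectic slice theorem exists in the literature and that the folded slice must therefore be defined by transport through the desingularization — also mirror the paper's own framing of this as a ``weaker statement'' valid for folds arising from invariant desingularizations.
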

        
    \begin{remark}
        Semilocally, the only constraint for a folded symplectic manifold to admit a  $b^m$-symplectic manifold is that the critical set of the folded symplectic manifold should be a cosymplectic manifold.
    \end{remark}
    
\section{The $b^m$-Hamiltonian reduction} \label{sec:bmHamRed}

    In this section we are going to describe a generalisation of the Marsden-Weinstein reduction for the case of $b^m$-Hamiltonian group actions. Eventually, we adapt the approach of the book \cite{stages} for the $b^m$-Hamiltonian reduction by stages and extend it from the Hamiltonian to the $b^m$-Hamiltonian set-up. 
    
    This reduction theorem allows considering reduction by admissible Hamiltonian functions, which are not smooth. Thus, our reduction scheme supersedes other general reduction schemes such as the ones explored in \cite{CMM} or the ones of symplectic Lie algebroids in \cite{MPR}.
    
     The main theorem in this section considers the case when the highest modular case is non-vanishing. If the modular weight is non-zero, but the highest modular weight is zero, the $b^m$-Hamiltonian vector fields generated by the action vanish along $Z$. This case is not so interesting for reduction purposes and will not be considered in this article.

    When the modular weight is zero, the reduction scheme is a consequence of the main result in \cite{MPR} as we explain in subsection \ref{others} below.

    These reduction schemes are considered at points $\mu(p)$ with $p\in Z$. Away from $Z$, the standard symplectic reduction scheme is applied.
    \subsection{Two motivating examples}
    Let us start with two motivating examples extended from  \cite{guillemin2015toric}  to the $b^m$-category (see also  \cite{MP}). In the examples below we examine the image of the moment map and use it to describe the process of reduction in an intuitive manner. Both examples correspond to  circle actions on $b^m$-surfaces (completely classified as $b^m$-manifolds in \cite{MP}).
    
    \begin{example}[The $b^m$- Hamiltonian $S^2$] \label{ex:bmsphere}
        Consider the sphere $S^2$ as a  $b^m$-symplectic manifold with critical set the equator:
        
        \[(S^2, Z = \{h = 0\}, \omega=\frac{d h}{h^m}\wedge d\theta),\] with $h\in\left[-1,1\right]$ and $\theta\in\left[0,2\pi\right)$.

        Take the $S^1$-action by rotations given by the flow of $\frac{\partial}{\partial \theta}$. Let us check that this action is indeed $b^m$-Hamiltonian and let us compute the moment map.
        There are two cases to consider:
        \begin{itemize}
            \item The case $m=1$: As $\iota_{\frac{\partial}{\partial \theta}}\omega=- \frac{d h}{h}=-d( \log |h|),$ the  moment map on $M \backslash Z$ is $\mu(h,\theta)= \log |h|$.
            
            \item The case $m>1$. Then, $\iota_{\frac{\partial}{\partial \theta}}\omega=- \frac{d h}{h^m}=-d(-\frac{1}{(m-1)h^{m-1}}),$ the  moment map on $M \backslash Z$ is $\mu(h,\theta)= -\frac{1}{(m-1)h^{m-1}}$.
        \end{itemize}
         The image of $\mu$  for $m=1$ is drawn in Figure \ref{fig:S2} as two superimposed half-lines. Each point in the image has two connected components in its pre-image (one pre-image hemisphere) in contrast with the classical symplectic case. 
         
         In both cases, as we explained in section \ref{sec:zigzag} for the case $m=1$, the moment map can be understood as  a section of ${{^b}^m}C^{\infty}$ by including points ``at infinity''.
        \begin{figure}[h!]
            \begin{tikzpicture}[scale=1]
                \pgfmathsetmacro{\rlinex}{6}
                \pgfmathsetmacro{\baseptd}{8}
                \pgfmathsetmacro{\rlineybottom}{2.75}
                \pgfmathsetmacro{\rlineymid}{4.25}
                \pgfmathsetmacro{\rlineytop}{5.75}
                \pgfmathsetmacro{\vertstretch}{1.9}	
                \pgfmathsetmacro{\yshift}{4.25}	

                \def\R{1.6}
                \pgfmathsetmacro{\circlex}{1.4}
                \draw[dashed, very thick, color = magenta] (\circlex + \R, \rlineymid) arc (0:180:{\R} and {\R * .2});
                \draw[very thick, fill = pink, opacity = .6] (\circlex, \rlineymid) circle (\R);
                \draw[very thick] (\circlex, \rlineymid) circle (\R);
                \draw[very thick, color = red] (\circlex + \R, \rlineymid) arc (0:-180:{\R} and {\R * .2});
                
                \draw (\circlex + 2, \rlineymid) edge node[above] {$\mu, m = 1$} (\rlinex - 0.5, \rlineymid);
                \draw[->] (\circlex + 2, \rlineymid) -- (\rlinex - 0.5, \rlineymid);

                \draw (\rlinex, \rlineybottom) -- (\rlinex, \rlineytop) node[right] {};

                \draw[black, fill = black] (\rlinex, \rlineymid) circle(.3mm);

                \draw[dblue, fill = dblue] (\rlinex - 0.2, \rlineymid) circle (1pt);
                \draw[dblue, fill = dblue] (\rlinex + 0.2, \rlineymid) circle (1pt);

                \draw[line width = 2pt, join = round, dblue, <-] (\rlinex - 0.2, \rlineybottom - 0.15) -- (\rlinex - 0.2, \rlineymid );
                \draw[line width = 2pt, join = round, dblue, <-] (\rlinex + 0.2, \rlineybottom - 0.15) -- (\rlinex + 0.2, \rlineymid);
                \draw [very thick, ->] (\circlex, \rlineymid + 1.3*\R) ++(\R * -.5, 0) arc (180:320: {\R * .5} and {\R * .1});
                \draw [very thick] (\circlex, \rlineymid + 1.3*\R) ++(\R * -.5, 0) arc (180:0: {\R * .5} and {\R * .1});

                \coordinate (shift) at (8,0);
                \begin{scope}[shift=(shift)]
                \pgfmathsetmacro{\rlinex}{6}
                \pgfmathsetmacro{\baseptd}{8}
                \pgfmathsetmacro{\rlineybottom}{2.75}
                \pgfmathsetmacro{\rlineymid}{4.25}
                \pgfmathsetmacro{\rlineytop}{5.75}
                \pgfmathsetmacro{\vertstretch}{1.9}	
                \pgfmathsetmacro{\yshift}{4.25}	

                \def\R{1.6}
                \pgfmathsetmacro{\circlex}{1.4}
                \draw[dashed, very thick, color = magenta] (\circlex + \R, \rlineymid) arc (0:180:{\R} and {\R * .2});
                \draw[very thick, fill = pink, opacity = .6] (\circlex, \rlineymid) circle (\R);
                \draw[very thick] (\circlex, \rlineymid) circle (\R);
                \draw[very thick, color = red] (\circlex + \R, \rlineymid) arc (0:-180:{\R} and {\R * .2});
                
                \draw (\circlex + 2, \rlineymid) edge node[above] {$\mu, m = 2$} (\rlinex - 0.5, \rlineymid);
                \draw[->] (\circlex + 2, \rlineymid) -- (\rlinex - 0.5, \rlineymid);

                \draw (\rlinex, \rlineybottom) -- (\rlinex, \rlineytop) node[right] {};

                \draw[black, fill = black] (\rlinex, \rlineymid) circle(.3mm);

                \draw[dblue, fill = dblue] (\rlinex - 0.2, \rlineymid) circle (1pt);
                \draw[dblue, fill = dblue] (\rlinex + 0.2, \rlineymid) circle (1pt);

                \draw[line width = 2pt, join = round, dblue, <-] (\rlinex - 0.2, \rlineybottom - 0.15) -- (\rlinex - 0.2, \rlineymid );
                \draw[line width = 2pt, join = round, dblue, ->] (\rlinex - 0.2, \rlineymid) -- (\rlinex - 0.2, \rlineytop + 0.15);
                
                \draw[line width = 2pt, join = round, dblue, <-] (\rlinex + 0.2, \rlineybottom - 0.15) -- (\rlinex + 0.2, \rlineymid );
                \draw[line width = 2pt, join = round, dblue, ->] (\rlinex + 0.2, \rlineymid) -- (\rlinex + 0.2, \rlineytop + 0.15);
                
                \draw [very thick, ->] (\circlex, \rlineymid + 1.3*\R) ++(\R * -.5, 0) arc (180:320: {\R * .5} and {\R * .1});
                \draw [very thick] (\circlex, \rlineymid + 1.3*\R) ++(\R * -.5, 0) arc (180:0: {\R * .5} and {\R * .1});
                \end{scope}
            \end{tikzpicture}
            \caption{The moment map of the $S^1$-action by rotations  on a $b^m$-symplectic $S^2$.}
            \label{fig:S2}
        \end{figure}
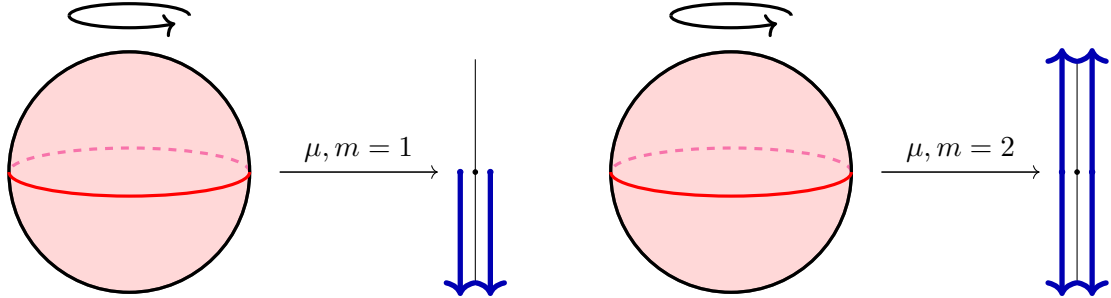
    \end{example}

    Let us yet examine another example. 
    \begin{example}\label{ex:b2torus}
Consider now as $b^2$-symplectic manifold the torus
\[
(\mathbb{T}^2, Z = \{\theta_1 \in \{0, \pi\}\}, \omega=  \frac{d\theta_1}{\sin^2\theta_1}\wedge d\theta_2)
\]
with standard coordinates: $\theta_1, \theta_2 \in \left[0, 2\pi \right)$. The critical  hypersurface $Z$ in this example is not connected. It is the union of two disjoint circles. Consider  the circle action of rotation on the $\theta_2$-coordinate with fundamental vector field $\frac{\partial}{\partial\theta_2}$. As
$$\iota_{\frac{\partial}{\partial\theta_2}}\omega = - \frac{d \theta_1}{\sin^2 \theta_1} = d\left(\frac{\cos\theta_1}{\sin\theta_1}\right).$$
Thus the associated $S^1$-action has as  $^{b^2}C^{\infty}$-Hamiltonian the function $-\frac{\cos\theta_1}{\sin\theta_1}$.

The image of this function on $M \backslash Z$ is drawn in Figure \ref{fig:torus}. Each of the two connected components of $M\setminus Z$ is diffeomorphic to an open cylinder and maps to one of these lines. Again, notice that the pre-image of a point in the image consists of two orbits.
\end{example}

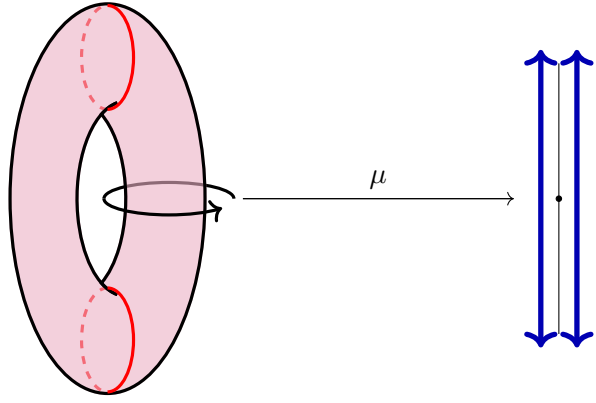
\begin{figure}[ht]
\begin{center}

\begin{tikzpicture}[scale=1.2]

\pgfmathsetmacro{\rlinex}{6}
\pgfmathsetmacro{\rlineybottom}{2.75}
\pgfmathsetmacro{\rlineymid}{4.25}
\pgfmathsetmacro{\rlineytop}{5.75}

\def\R{1.6}
\pgfmathsetmacro{\donutx}{1.5}
\pgfmathsetmacro{\sizer}{1.8}

\draw [very thick] (\donutx - 0.3*\sizer, \rlineymid) arc (180:0: {\sizer * .4} and {\sizer * .1});

\draw [red, very thick, dashed] (\donutx - 0.5, \rlineymid - .55*\sizer) arc (90:270:{.16*\sizer} and {0.32*\sizer});
\draw [red, very thick, dashed] (\donutx - 0.5, \rlineymid + .55*\sizer) arc (270:90:{.16*\sizer} and {0.32*\sizer});

\DrawFilledDonutops{\donutx - 0.5}{\rlineymid}{.6*\sizer}{1.2*\sizer}{-90}{purple!30}{very thick}{white}
\DrawDonut{\donutx - 0.5}{\rlineymid}{.6*\sizer}{1.2*\sizer}{-90}{black}{very thick}

\draw [red, very thick] (\donutx - 0.5, \rlineymid - .55*\sizer) arc (90:-90:{.16*\sizer} and {0.32*\sizer});
\draw [red, very thick] (\donutx - 0.5, \rlineymid + .55*\sizer) arc (-90:90:{.16*\sizer} and {0.32*\sizer});

\draw [very thick] (\donutx - .3*\sizer, \rlineymid) arc (180:132: {\sizer * .4} and {\sizer * .1});
\draw [very thick, ->] (\donutx - .3*\sizer, \rlineymid) arc (180:325: {\sizer * .4} and {\sizer * .1});

\draw     (\donutx + 1, \rlineymid) edge node[above] {$\mu$} (\rlinex - 0.5, \rlineymid);
\draw[->] (\donutx + 1, \rlineymid) -- (\rlinex - 0.5, \rlineymid);

\draw (\rlinex, \rlineybottom) -- (\rlinex, \rlineytop) node[right] {};

\draw[black, fill = black] (\rlinex, \rlineymid) circle(.3mm);

\draw[line width = 2pt, join = round, dblue, <->] (\rlinex - 0.2, \rlineybottom - 0.15) -- (\rlinex - 0.2, \rlineytop + 0.15);
\draw[line width = 2pt, join = round, dblue, <->] (\rlinex + 0.2, \rlineybottom - 0.15) -- (\rlinex + 0.2, \rlineytop + 0.15);
 
\end{tikzpicture}

\end{center}
 \caption{An $S^1$-action on a $b$-symplectic $\mathbb{T}^2$ and its moment map.}
 \label{fig:torus}
 \end{figure}
\begin{example} \label{ex:bmtorus}
Similarly, one can consider the torus to be a $b^m$-symplectic manifold for any integer $m$
\[
(\mathbb{T}^2, Z = \{\theta_1 \in \{0, \pi\}\}, \omega=  \frac{d\theta_1}{\sin^m\theta_1}\wedge d\theta_2).
\]
Then 
$$\iota_{\frac{\partial}{\partial\theta_2}}\omega = - \frac{d \theta_1}{\sin^m \theta_1} = d\left(\frac{|\cos\theta_1|}{\cos\theta_1} \frac{_2F_1 \left ( \frac{1}{2}, \frac{1 - m}{2}; \frac{3 - m}{2}; \sin^2(\theta_1) \right )}{(1-m) \sin^{m - 1} \theta_1}\right),$$
where $_2F_1$ is the hypergeometric function. 

Thus the associated $S^1$-action has as  $^{b^m}C^{\infty}$-Hamiltonian the function $- \frac{|\cos\theta_1|}{\cos\theta_1} \frac{_2F_1 \left ( \frac{1}{2}, \frac{1 - m}{2}; \frac{3 - m}{2}; \sin^2(\theta_1) \right )}{(1-m) \sin^{m - 1} \theta_1}$.
\end{example}

      In all the examples described above  when we fix a value of the moment map we obtain a circle (with the exception of the fixed points) where the initial $S^1$-action restricts. This circle can be quotiented out by the induced $S^1$-action to obtain a point. The singular symplectic structure in this process is also reduced to the trivial symplectic structure on the point.
    
    This would be a hands-on example of $b^m$-symplectic reduction. This reduction reduces the $b^m$-sphere/torus to a point. In higher dimensions, this reduction yields a non-trivial symplectic structure on the quotient.

\subsection{The case of non-vanishing highest modular weight}
   
    Let us now state the  $b^m$-symplectic reduction theorem of a $b^m$-Hamiltonian action with non-vanishing highest modular weight. The critical outcome of this result is that the \emph{singularity} of the $b^m$-symplectic structure is cleared away by the reduction procedure. So we could think that reduction is out of the $b^m$-symplectic category. However, we observe that reduction sits in the category of $E$-symplectic manifolds \cite{MS} where one could more generally formulate the reduction scheme. 
    
     From now on, inspired by example from Fig. \ref{pic:s2} we will introduce a notation to denote the image of the points at infinity as a boldfaced zero $\mathbf{0}$ by this, we mean the point  $\textbf{0}=(p_\infty,0)$ where the splitting of the moment map is given in view of the slice theorem (Theorem \ref{th:bmAction}), in a neighbourhood of the orbit as,   
     \begin{align}\label{tmp2}
        \mu = c_1 \log |t| + \sum \limits_{i = 1}^{m - 1} c_{i + 1} \frac{t^{- i}}{i} + \mu_0(x, y).
    \end{align}
    with symplectic form:
    \begin{align}\label{tmp3}
        \omega = \sum \limits_{i = 1}^m c_i \frac{d t}{t^i} \wedge d \theta + \pi^*(\omega_{MGS})
    \end{align}
    Thus when we consider $\mu^{-1}(\textbf{0})$ we mean the intersection of the pre-image of $\mu_0$ in the enlarged model with $t=0$ 
    
     We recall once again that the group $G$ is of the form $(S^1 \times H) / \Gamma$ that can as well be seen as $S^1 \times H$ on the universal cover of $M$.   For convenience, we will make two \underline{assumptions}:
     
     \begin{itemize}
         \item The induced action of $H$ is locally free.
         \item The action of $S^1$ on the covering model associated with the finite group $\Gamma$ is free.
         \item $0$ is a regular value for $\mu_0$ (by abuse of notation, we will then say that \textbf{0} is a regular point of $\mu$).
     \end{itemize}
   
    \begin{theorem}[The $b^m$-Marsden-Weinstein reduction] \label{th:bmHamRed}
        Given a $b^m$-Hamiltonian (locally) free action of a Lie group $G$ on a $b^m$-symplectic manifold $M^{2n}$. Assume that the highest modular weight is non-vanishing, then the pre-image of a regular point $\mu^{-1}(\textbf{0})$  is a $b^m$-presymplectic manifold that has an induced action of $G$. The space of orbits of the induced action $M//G$ is a symplectic orbifold. This reduced symplectic orbifold is symplectically isomorphic to the standard symplectic reduction of a symplectic leaf on $Z$ by a Lie subgroup of $G$.
    \end{theorem}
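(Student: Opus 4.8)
The plan is to reduce everything to the normal form furnished by the $b^m$-slice theorem (Theorem \ref{th:bmHsl}) and then to perform the reduction in the two decoupled directions separately: the transverse $S^1$-direction that carries the singularity, and the symplectic slice on which the classical Marsden--Weinstein machinery applies. First I would invoke Theorem \ref{th:bmHsl} around the orbit $\mathcal O_z\subset Z$ to replace a $G$-invariant neighbourhood by the model ${}^{b^m}T^*G\times_{H_z\times\mathbb{Z}_d}V_z$ carrying
$$\omega=\sum_{i=1}^m c_i\frac{dt}{t^i}\wedge d\theta+\pi^*(\omega_H),\qquad \mu=c_1\log|t|+\sum_{i=1}^{m-1}c_{i+1}\frac{t^{-i}}{i}+\mu_0(x,y).$$
As in the proof of the slice theorem I would pass to the finite cover so that $Z\cong S^1\times L$ is a trivial mapping torus (Definition \ref{mappingtorusstructure}) and $G$ splits as $S^1\times H$, with $S^1$ acting by translation in $\theta$ and $H$ acting by the Hamiltonian action with moment map $\mu_0$ on the slice. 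All constructions below are carried out equivariantly and then pushed back down through the discrete quotient by $\Gamma$.

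Second, I would establish the manifold and presymplectic claims. By the convention fixed before the statement, $\mu^{-1}(\mathbf 0)=\{t=0\}\cap\mu_0^{-1}(0)$. Since $0$ is a regular value of $\mu_0$ and $H$ acts locally freely, $\mu_0^{-1}(0)$ is a smooth submanifold of the slice; intersecting with the smooth hypersurface $\{t=0\}=Z$ and retaining the free $\theta$-circle shows that $\mu^{-1}(\mathbf 0)$ is a smooth submanifold inside $Z$. Restricting $\omega$, the smooth piece $\pi^*\omega_H$ pulls back to a closed $2$-form; contracting with the $S^1$-generator produces only the singular one-form $-\sum_i c_i\,dt/t^i$, which restricts to zero on $\mu^{-1}(\mathbf 0)\subset\{t=0\}$, so $\partial_\theta$ lies in the kernel of the induced form. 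Together with the tangents to the $H$-orbits (in the kernel by the usual computation $\iota_{\xi^M}\omega|_{\mu_0^{-1}(0)}=\langle d\mu_0,\xi\rangle|_{\mu_0^{-1}(0)}=0$), this exhibits a $G$-invariant closed $2$-form of constant rank whose characteristic distribution contains $T(G\cdot p)$; this is the $b^m$-presymplectic structure.

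Third, I would quotient. Local freeness makes the orbit space $M/\!/G$ an orbifold by the standard argument (cf. the Symplectic Reduction theorem and \cite{GGK,SL}), and the induced form descends to a closed $2$-form $\omega_{red}$ once its kernel is identified with the orbit directions. The decisive observation is that the \emph{singular} summand $\sum_i c_i\frac{dt}{t^i}\wedge d\theta$ factors through $d\theta$ and is therefore annihilated upon quotienting by the $S^1$ that moves $\theta$: this is exactly the sense in which reduction removes the singularity. Concretely, in the cover $\mu^{-1}(\mathbf 0)\cong S^1\times(\mu_0|_L)^{-1}(0)$, quotienting by the $S^1$-translation deletes the circle factor together with the twisted term, leaving $(\mu_0|_L)^{-1}(0)$ with the leaf form $\gamma_L=\omega_H|_L$; quotienting the residual locally free $H$-action is precisely classical Marsden--Weinstein reduction of the symplectic leaf $L$. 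Hence $M/\!/G\cong L/\!/H$ as symplectic orbifolds, identifying the reduced space with the standard symplectic reduction of a symplectic leaf of $Z$ by the Lie subgroup $H\le G$, and the non-degeneracy of $\omega_{red}$ is inherited from that of the Marsden--Weinstein form on $L/\!/H$.

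The main obstacle I anticipate lies in the third step: verifying that the characteristic distribution of the induced presymplectic form is \emph{exactly} $T(G\cdot p)$, neither larger nor smaller. The inclusion $T(G\cdot p)\subseteq\ker$ is the easy moment-map computation above, but the reverse inclusion — which is what upgrades $\omega_{red}$ from presymplectic to genuinely symplectic — is where the hypothesis $a_m\neq 0$ is indispensable: non-vanishing highest modular weight forces the transverse $S^1$-action to be everywhere transverse to the symplectic foliation of $Z$, so that $\partial_\theta$ supplies a genuine extra kernel direction precisely matching the $S^1$-orbit and the singular term it absorbs, and the dimension count $\dim M/\!/G=(2n-1-\dim H)-(1+\dim H)=2(n-1-\dim H)$ closes up. Were $a_m$ to vanish, the $S^1$-generated field would vanish along $Z$, the count would break, and the quotient would fail to be symplectic; this is exactly why that case is excluded and deferred to the treatment via \cite{MPR}.
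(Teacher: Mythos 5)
Your proposal is correct and follows essentially the same strategy as the paper's proof: invoke the $b^m$-slice theorem (Theorem \ref{th:bmHsl}) to obtain the split normal form for $\omega$ and $\mu$, pass to the cover where $G=S^1\times H$, reduce by the transverse $S^1$ first (which is exactly what removes the singular term), and then apply classical Marsden--Weinstein reduction to the Hamiltonian $H$-action on the slice, identifying $M/\!/G$ with $\mathcal{L}/\!/H$. Your explicit verification that $\mu^{-1}(\mathbf{0})$ is a smooth submanifold of $Z$, the identification of the characteristic distribution with $T(G\cdot p)$, and the dimension count are details the paper leaves implicit, but the route is the same.
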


    \begin{proof}
    
        \textbf{Step 1: } The proof starts by setting up a structure (smooth or orbifold type) on the topological quotient. When the action is free, the quotient is a smooth manifold. If the action is locally free, the quotient has an orbifold structure (see \cite{GGK}). This orbifold structure is well-understood in the symplectic case (see for instance, \cite{GGK}).
        
        \textbf{Step 2:} Next, using the slice theorem (Theorem \ref{th:bmHsl}), we describe the induced geometrical structure on the quotient.
        If the group action is free, then a neighbourhood of the orbit is diffeomorphic to a product of the orbit with a symplectic slice. Otherwise, there is a finite group $\Gamma$ involved, and by arguing on a covering in a standard way, we can reduce to the product case.
        We apply the $b^m$-symplectic slice theorem \ref{th:bmHsl} and, more concretely, the normal form for the $b^m$-symplectic form and the moment map on a neighbourhood of $\mathcal O_x$. 
        
        Due to the $b^m$-symplectic slice theorem \ref{th:bmHsl}, the tubular neighbourhood of the orbit $\mathcal O_x$ is equipped with the following symplectic model:
        \begin{align}\label{tmp1}
            \omega = \sum \limits_{i = 1}^m c_i \frac{d t}{t^i} \wedge d \theta + \pi^*(\omega_{MGS})
        \end{align}
        and the induced moment map along the orbit has the form:
        \begin{align}\label{tmp5}
            \mu = c_1 \log |t| + \sum \limits_{i = 1}^{m - 1} c_{i + 1} \frac{t^{- i}}{i} + \mu_0(x, y).
        \end{align}
        
        Thanks to this expression:
       As the highest modular weight $c_m$ is non-zero, we can first (for simplicity and clearness) consider the reduction with respect only to the $S^1$ component (this is possible because of the normal form model in Theorem \ref{th:bmHsl}). The resulting space is a symplectic manifold which we denote as $M // S^1$ with Hamiltonian $H$-action and the corresponding induced moment map $\mu_0(x, y)$. This moment map $\mu_0$ is a standard Hamiltonian moment map. 
        
        \textbf{Final step:}
        The $H$-action on the cover can be seen as a usual Hamiltonian action on a symplectic slice so that the Marsden-Weinstein reduction can be applied directly to the second component, and the reduction $\mu^{-1}(0)/G$ is a symplectic orbifold which is symplectically equivalent to $\mathcal L//H$ (where $\mathcal L$ is any symplectic leaf on $Z$). This ends the proof of the theorem.
    \end{proof}
    \begin{remark} 
        The very general reduction scheme explained in \cite{CMM} seems to consider only smooth functions, so our approach is more general in the case of $b^m$-symplectic manifolds.
    \end{remark}
    
    \begin{remark}
        In the classical set-up of the study of Hamiltonian $G$-spaces, the Kirwan map  $\kappa:H_G^*(M)\mapsto H^*(M//G)$ defines a surjection between the equivariant cohomology of the symplectic manifold and the cohomology of the symplectic reduced space.

        Using the $b^m$-equivariant cohomology and the Mazzeo-Melrose formula, one could define the Kirwan map for $b^m$-Hamiltonian actions. Properties of this Kirwan map will be studied elsewhere.
    \end{remark}
   
\subsection{The case of vanishing modular weight}\label{others}
    The case of vanishing modular weight is easier to deal with as the action is Hamiltonian, and the $b^m$-symplectic reduction of a $b^m$-symplectic manifold is a $b^m$-symplectic manifold.
    
    For general symplectic Lie algebroids, the general Marsden-Weinstein reduction has been proved by Marrero, Padrón and Rodriguez-Olmos (see Theorem 3.11 in \cite{MPR}).
    
    By direct application of this result for the particular Lie algebroid given by the $b^m$-cotangent bundle, we obtain the following:
    
    \begin{theorem}[The $b^m$-Marsden-Weinstein reduction with zero modular weight]
        Given a $b^m$-Hamiltonian (locally) free action of a Lie group $G$ on a $b^m$-symplectic manifold $M^{2n}$ with vanishing modular weight. Then  the pre-image of a point $\mu^{-1}(0)$ is a $b^m$-presymplectic manifold that has an induced action of $G$. The space of orbits of the induced action $M//G$ is a $b^m$-symplectic manifold.
    \end{theorem}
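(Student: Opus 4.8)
The plan is to exhibit the $b^m$-symplectic structure as a symplectic structure on the Lie algebroid ${}^{b^m}TM$ and then invoke the Marsden--Weinstein reduction for symplectic Lie algebroids, Theorem 3.11 of \cite{MPR}. The first point to record is that the hypothesis of vanishing modular weight removes the singular part of the moment map: since the coefficients appearing in the Laurent form \eqref{eqn:newlaurent} pair with the fundamental vector fields through the modular weights $a_j(\xi)=\alpha_j(\xi^M)$, the vanishing of all $a_j$ forces the Laurent terms $c_1\log|t|+\sum_{i=1}^{m-1}c_{i+1}t^{-i}/i$ of the normal form to drop out, leaving a genuinely smooth map $\mu=\mu_0\colon M\to\mathfrak{g}^*$. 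Geometrically this is the statement that the $G$-orbits are tangent to the symplectic leaves of $Z$, so the action is Hamiltonian in the ordinary sense along each leaf. In particular, for a (locally) free action $\mu^{-1}(0)$ is a submanifold transverse to $Z$, and pulling $\omega$ back along $i\colon\mu^{-1}(0)\hookrightarrow M$ yields a closed $b^m$-form whose kernel is spanned pointwise by the fundamental vector fields; this is precisely the asserted $b^m$-presymplectic structure.

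Next I would phrase the data in Lie-algebroid terms. Because $G$ preserves $\omega$ it preserves its degeneracy locus $Z$, hence acts on the whole algebroid $A:={}^{b^m}TM$ (the $b^m$-cotangent bundle Lie algebroid of \cite{MPR}) by Lie algebroid automorphisms; the infinitesimal action is a bracket-preserving map $\mathfrak{g}\to\Gamma(A)$, and the identity $\iota_{\xi^M}\omega=d\langle\mu,\xi\rangle$ says exactly that $\mu$ is a moment map for a Hamiltonian action on the symplectic Lie algebroid $(A,\omega)$ in the sense of \cite{MPR}. With local freeness of the action in hand (so that $M//G$ is a manifold or, in the merely locally free case, an orbifold, cf. \cite{GGK}), the hypotheses of Theorem 3.11 of \cite{MPR} are met, and that theorem produces a reduced symplectic Lie algebroid $(\bar A,\bar\omega)$ over the quotient $M//G=\mu^{-1}(0)/G$, characterised as usual by $\pi^*\bar\omega=i^*\omega$ for $\pi\colon\mu^{-1}(0)\to M//G$ the orbit projection.

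The decisive and most delicate step, which I expect to be the main obstacle, is to identify the abstract reduced algebroid $(\bar A,\bar\omega)$ with the $b^m$-cotangent algebroid of the quotient, i.e. to show $\bar A\cong{}^{b^m}T(M//G)$ for a suitable descended hypersurface. For this I would verify that $Z$, being $G$-invariant and meeting $\mu^{-1}(0)$ cleanly, descends to an embedded hypersurface $\bar Z:=(Z\cap\mu^{-1}(0))/G\subset M//G$, and that the anchor of $\bar A$ is an isomorphism off $\bar Z$ while degenerating to order exactly $m$ along $\bar Z$. Here the vanishing of the modular weight is what makes the case genuinely different from Theorem \ref{th:bmHamRed}: because the orbits are tangent to the leaves, the reduction only quotients the leafwise symplectic directions and leaves the transverse Laurent block $\sum_{i=1}^m c_i\,\frac{dt}{t^i}\wedge d\theta$ of the normal form of Theorem \ref{th:bmHsl} untouched, so the order-$m$ degeneracy is transported verbatim to $M//G$ rather than being removed. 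Checking that $\bar\omega$ then has maximal rank as a section of $\wedge^2\bigl({}^{b^m}T^*(M//G)\bigr)$ exhibits $(M//G,\bar Z,\bar\omega)$ as a $b^m$-symplectic manifold and completes the argument.
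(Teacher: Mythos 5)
Your proposal follows essentially the same route as the paper: the paper establishes this theorem precisely by noting that with vanishing modular weight the action is genuinely Hamiltonian (smooth moment map) and then invoking the Marsden--Weinstein reduction for symplectic Lie algebroids, Theorem 3.11 of \cite{MPR}, applied to the particular Lie algebroid ${}^{b^m}TM$. The paper records nothing beyond that direct citation, so your additional verifications (the smoothness of $\mu$ forced by the vanishing modular weights, and the identification of the abstract reduced algebroid with ${}^{b^m}T(M//G)$ over the descended hypersurface) simply fill in details the paper leaves implicit.
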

    
    \begin{remark} 
        The reduction scheme in the zero modular weight case for $b$-symplectic manifolds is Corollary 3.10 in \cite{GZ}. This corresponds to the case when the $b^m$-Hamiltonian action is indeed Hamiltonian. So, it is a particular case of our reduction procedure.
    \end{remark}
    
\section{The desingularization procedure and reduction}
        
    In this section, we investigate the desingularization procedure of \cite{GMW} in more detail to understand how it behaves under group actions on the same manifold. This leads us to prove that  \emph{desingularization commutes with reduction}.
    
    We summarize this idea in the following diagram: 

\vspace{1mm}

\begin{center}

        \begin{tikzcd}
            & (M, Z, \omega, G)\arrow{rr}{b^m-Ham \phantom{a} red} \arrow{d}{desing} && (M // G, \omega |_{M // G}) \\
            & (M, \omega_\varepsilon, G) \arrow{urr}[near end]{Ham \phantom{a} red}\\ 
        \end{tikzcd}
    
    \end{center}
  \vspace{1mm}  
    \begin{theorem}
        The desingularization procedure commutes with the $b^m$-Hamiltonian reduction.
    \end{theorem}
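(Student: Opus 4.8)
The plan is to establish the statement in the local normal form furnished by the $b^m$-symplectic slice theorem (Theorem \ref{th:bmHsl}) and then check that the local identifications glue. First I would invoke Lemma \ref{le:Haar} to fix a $G$-invariant desingularizing function $f_\varepsilon^G$, so that the $G$-action is literally the same on $(M, Z, \omega)$ and on the desingularized $(M, \omega_\varepsilon)$. In particular the desingularization is supported in the $\varepsilon$-tubular neighbourhood $U_\varepsilon$ of $Z$ and leaves $\omega$ — together with all reduction data — untouched away from $Z$, where the two reductions agree trivially. On $U_\varepsilon$, the desingularization-of-normal-form statement (even case) tells us that the $b^m$-symplectic slice $V_p$ is simultaneously the symplectic slice for $\omega_\varepsilon$, and that both forms live on $^{b^m}T^*G \times_{H_z \times \mathbb{Z}_d} V_z$ with the same transverse symplectic datum $\pi^*(\omega_H)$; only the factor $\sum_i c_i \frac{dt}{t^i} \wedge d\theta$ is replaced by the smooth term $df_\varepsilon^G \wedge d\theta$.

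Next I would compare the moment maps. In the $b^m$-picture the $S^1$-moment map is the singular primitive $\mu_S = c_1\log|t| + \sum_{i=1}^{m-1} c_{i+1}\frac{t^{-i}}{i}$, whose reduction value is the point at infinity $\mathbf{0} = (p_\infty, 0)$ sitting at $t=0$; for the desingularized form the $S^1$-moment map is (a multiple of) $f_\varepsilon^G$ itself, which by construction is smooth, odd and strictly increasing through the origin, so that $f_\varepsilon^G(0)=0$ and $t=0$ is a regular level. Consequently the two transverse level sets coincide: both equal $\{t=0\}\cap \mu_0^{-1}(0) = Z \cap \mu_0^{-1}(0)$, on which the residual $H$-action and its moment map $\mu_0(x,y)$ are identical. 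Thus the full level sets $\mu^{-1}(\mathbf{0})$ and $\mu_\varepsilon^{-1}(0)$ agree as $G$-spaces.

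It then remains to check that the two reduced symplectic forms coincide. Using the defining identity $\pi_{\mathrm{red}}^*\omega_{\mathrm{red}} = i^*\omega$ (and its analogue for $\omega_\varepsilon$), I would note that on the common level set $Z \cap \mu_0^{-1}(0)$ the term carrying $dt$ — whether $\sum_i c_i \frac{dt}{t^i}\wedge d\theta$ or $df_\varepsilon^G \wedge d\theta$ — pulls back trivially to the quotient: the $\theta$-direction is collapsed by the $S^1$-quotient and the $t$-direction is transverse to the level set, hence absent from $i^*$. Performing the reduction by stages (first $S^1$, then $H$, as justified for both categories by Theorem \ref{th:bmHamRed} and ordinary Marsden--Weinstein reduction), the $S^1$-quotient kills exactly the $(t,\theta)$-block and returns the symplectic leaf $\mathcal{L}$ with form $\omega_H$ in both cases, after which the $H$-reduction yields $\mathcal{L}//H$ with the descended form. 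Since the surviving datum $\omega_H$ is the same, the two routes produce the same symplectic orbifold.

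I expect the main obstacle to be the careful bookkeeping of the level sets at $t=0$: the $b^m$-moment map genuinely blows up there while the desingularized one stays finite, so the equality of level sets must be argued through the normal form rather than by comparing moment-map values directly, and one must verify that the point-at-infinity prescription $\mathbf{0}$ is exactly the image under desingularization of the regular value $0$ of $f_\varepsilon^G$. A secondary point is to confirm that the local identifications are compatible with the mapping-torus and finite-cover structure of $Z$, so that the glued reduced spaces agree globally; this follows from the equivariance built into the slice theorem and Lemma \ref{keylemma}, but I would state it explicitly.
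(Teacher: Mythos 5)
Your proposal is correct and follows essentially the same route as the paper's proof: fix a $G$-invariant desingularizing function by Haar averaging, pass to the slice-theorem normal form where the moment map splits into a singular $S^1$-component and a $t$-independent $H$-component $\mu_0$, and observe that reduction by stages (first $S^1$, then $H$) kills exactly the $(t,\theta)$-block that the desingularization modifies, so both routes terminate in the Marsden--Weinstein reduction of the same Hamiltonian $H$-action on the same symplectic slice. Your extra bookkeeping (identification of the level sets $\{t=0\}\cap\mu_0^{-1}(0)$ and comparison of the reduced forms) only makes explicit steps the paper leaves implicit.
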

    
    \begin{proof}
        Notice that by virtue of Lemma \ref{le:Haar}, we can assume that this function  is invariant by the  $G$-action $\rho$. The normal form given by the slice theorem \ref{th:bmHsl} contains precisely the associated bundle  $^{b^m}T^* G \times_{H_z \times \mathbb{Z_d}} V_z$. Theorem \ref{th:bmHsl} also provides the normal form for the moment map
        \begin{align}\label{tmp4}
            \mu = \left ( c_1 \log |t| + \sum \limits_{i = 1}^{m - 1} c_{i + 1} \frac{t^{- i}}{i} \right ) + \mu_0(x, y).
        \end{align}
        As $G$-action can be locally seen as a product $S^1 \times H$, the first components of $\mu$ (in brackets) correspond to the $S^1$-action, and the last one $\mu_0$ corresponds to $H$-action. We will refer to them as $S^1$- and $H$-components of the moment map. Notice that the $H$-component $\mu_0$ is independent of $t$ therefore, once we apply the $b^m$-Hamiltonian reduction with respect to $S^1$-action, the $S^1$-component vanishes and the $H$-component acts on the symplectic slice. Moreover, $M//S^1$ is, in fact, a symplectic manifold with Hamiltonian $H$-action and the normal form for the moment map $\mu_0$. Now we can apply the classic symplectic Marsden-Weinstein reduction. 
    \end{proof}
        The critical point to the last proof is that in the moment map normal form given by Theorem \ref{th:bmHsl}, $\mu$ splits onto two orthogonal components automatically leading us to the following corollary.
    
    \begin{cor}
        The $b^m$-Hamiltonian $G$-action admits a \emph{reduction-by-stages} procedure.
    \end{cor}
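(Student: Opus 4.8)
The plan is to realize the one-stage $b^m$-Hamiltonian reduction by the full group $G=(S^1\times H)/\Gamma$ as the composition of two successive reductions, performed in the order dictated by the normal form of Theorem~\ref{th:bmHsl}: first reduce by the distinguished $S^1$-factor, which by Theorem~\ref{th:bmHamRed} already clears the singularity and deposits us in the honest symplectic category, and then reduce the resulting symplectic orbifold by the residual $H$-action using the classical reduction-by-stages framework of \cite{stages}. The structural input that makes this possible is exactly the one recorded in the remark preceding the statement: in the slice model the moment map splits orthogonally along $\mathfrak g^*=\mathbb R\oplus\mathfrak h^*$, with the $S^1$-component $\mu_{S^1}=c_1\log|t|+\sum_{i=1}^{m-1}c_{i+1}t^{-i}/i$ depending only on the singular direction $t$ and the $H$-component $\mu_0(x,y)$ independent of $t$.

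First I would set up the first stage. Passing to the finite cover on which the action becomes that of the genuine product $S^1\times H$ (lifting the defining one- and two-forms as in the Braddell--Kiesenhofer--Miranda theorem, and using Lemma~\ref{le:Haar} where invariance of auxiliary data is needed), I would apply Theorem~\ref{th:bmHamRed} to the $S^1$-factor alone at the level $p_\infty$. Since the highest modular weight $c_m$ is non-vanishing, the reduced space $M/\!/S^1$ is a \emph{smooth symplectic} manifold---this is precisely the desingularizing effect of the theorem. The point to verify here is that the residual $H$-action descends to $M/\!/S^1$ and stays Hamiltonian with moment map the reduction of $\mu_0$; this is immediate from the orthogonal splitting, because $\mu_0$ is $S^1$-invariant and $t$-independent, so it passes to a well-defined smooth function on $\mu_{S^1}^{-1}(p_\infty)/S^1$ whose Hamiltonian vector field is the image of the $H$-fundamental vector field.

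The second stage is then purely classical: $M/\!/S^1$ is an ordinary symplectic manifold carrying a Hamiltonian $H$-action, so the Marsden--Weinstein quotient $(M/\!/S^1)/\!/H$ is a symplectic orbifold. To conclude I would invoke the reduction-by-stages theorem of \cite{stages}, applied to the normal subgroup $S^1\trianglelefteq S^1\times H$ (every factor of a direct product is normal), to identify $(M/\!/S^1)/\!/H$ with the total reduction $M/\!/(S^1\times H)$, and then descend through the finite group $\Gamma$ to recover $M/\!/G$ as in Theorem~\ref{th:bmHamRed}. The main obstacle I anticipate is not the bookkeeping but checking that the hypotheses of the stages theorem are genuinely satisfied for the \emph{singular} first stage: one must confirm that the $b^m$-moment map is equivariant for the full product and that the two level conditions $\mu_{S^1}=p_\infty$ and $\mu_0=0$ interact in the way the stages construction requires. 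Here the orthogonality of the two components furnished by the slice normal form of Theorem~\ref{th:bmHsl} does all the work---because the level sets are cut out in independent directions, the commutation of the quotients is automatic, and the identification of the two-stage quotient with the one-step reduction of Theorem~\ref{th:bmHamRed} follows.
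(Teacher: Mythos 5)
Your argument is correct, but it takes a genuinely different route from the paper's proof, which disposes of the corollary in one sentence: since the $b^m$-Hamiltonian reduction commutes with the desingularization procedure of \cite{GMW} (the theorem immediately preceding the corollary), and since Marsden--Weinstein reduction of the desingularized, honestly symplectic manifold by a product $G_1\times G_2$ can be done by stages by \cite{stages}, the stages property transfers to the $b^m$-category. You never desingularize: you run the first stage inside the singular category by applying Theorem \ref{th:bmHamRed} to the transverse $S^1$ alone --- which is exactly where the singularity is removed --- then reduce the resulting symplectic orbifold by the residual Hamiltonian $H$-action classically, and finally identify $(M/\!/S^1)/\!/H$ with $M/\!/(S^1\times H)$ via the orthogonal splitting $\mu=\mu_{S^1}(t)+\mu_0(x,y)$ of the slice normal form of Theorem \ref{th:bmHsl}. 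You are also right to flag that \cite{stages} cannot be invoked verbatim on the singular total space and that the splitting is what makes the identification work: in the slice model $\mu^{-1}(\mathbf{0})=\{t=0\}\cap\mu_0^{-1}(0)$, the $S^1$- and $H$-directions are independent, so the iterated and one-step quotients coincide on the nose, with the reduced forms matching by the characterization $\pi^*\omega_{red}=i^*\omega$. Both proofs ultimately rest on the same two ingredients (the slice normal form and classical Marsden--Weinstein theory), but yours is self-contained in the singular category and insensitive to the parity of $m$, whereas the paper's detour through desingularization lands in the folded symplectic (not symplectic) category when $m$ is odd, where \cite{stages} does not literally apply; what the paper's route buys is brevity and the conceptual point that reduction by stages is one more property transported by the desingularization procedure.
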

    
    \begin{proof}
        As the Marsden-Weinstein reduction commutes with the desingularization and the Marsden-Weinstein reduction for a Hamiltonian action of $G_1 \times G_2$ can be done by stages \cite{stages}, we infer that the $b^m$-Hamiltonian reduction can be done by stages. 
    \end{proof}

\section{Reduction for generalized moment maps} \label{sec:qHam}
    In this section, we would like to emphasize the universality of the proof of the reduction theorem \ref{th:bmHamRed}. Notice that this proof highly depends on the splitting property of the moment map \ref{tmp2} and uses Hamiltonian reduction as a \emph{black box} for the corresponding term of the moment map. In this section, we would like to show that different generalizations of the moment map can be used in this proof. Particularly, we focus on the group-valued moment maps \cite{AMM}, and the corresponding reduction theory \cite{BTW}. We will follow the same approach as in \cite{BTW}, showing throughout the proof that all the statements used can be generalized to a singular version (including $b^m$-type singularities). Let us first define what a group-valued moment map is. From now on, the following notation is used: $\theta^l$ and $\theta^r$ stand for the left- and right-invariant Maurer-Cartan forms, $(\cdot, \cdot)$ denotes a choice of an invariant positive definite inner product on $\mathfrak g$ and $\chi \in \Omega^3(G)$ is a canonical closed bi-invariant $3$-form $\chi = \frac{1}{12}(\theta^l, [\theta^l, \theta^l]) = \frac{1}{12}(\theta^r, [\theta^r, \theta^r])$.

    \begin{definition}
        A \textbf{quasi-Hamiltonian $G$-space} is a manifold $M$ with a $G$-action $\rho$, an invariant $2$-form $\sigma$ and an equivariant \textbf{group-valued moment map} $\Phi : M \to G$ such that:
        \begin{itemize}
            \item[(i)] $\sigma$ is equivariantly closed: $d \sigma = - \Phi^* \chi$, 
        
            \item[(ii)] the moment map condition is satisfied: $\iota(\upsilon_\xi) \sigma = \frac{1}{2}\Phi^*\left ( \theta^l + \theta^r, \xi \right )$ ,
        
            \item[(iii)] $\sigma$ is weakly non-degenerate:
                $$
                    \ker \sigma_x \cap \ker d \Phi = 0.
                $$
        \end{itemize}
    \end{definition}
 
 \begin{remark} The manifold is not necessarily symplectic. For instance $S^4$ is an $SU(2)$-quasi-Hamiltonian space and the 
 with moment map $\Phi:S^4\to
{SU}(2)\cong S^3$ the suspension of the Hopf fibration
$S^3\to S^2$ (see Appendix A in \cite{AMW}). More generally, the spin spheres $S^{2n}$ admit a quasi-Hamiltonian structure (confer \cite{HJS}). Other classical non-symplectic examples are contained in the seminal article \cite{AMM}. For instance, $D(G)$ the double of a Lie group is not symplectic if the group is compact and simply connected (as its second cohomology group vanishes).
     
 \end{remark}
  
  \begin{example}\label{ex:conjugacy}
  Conjugacy classes 
of a Lie group $G$   provide basic examples of quasi-Hamiltonian spaces. Let  $\mathcal{C}\subset G$ be a conjugacy class with the conjugation action
 of $G$. Then $\mathcal{C}\subset G$ is a quasi-Hamiltonian space with  moment map $\Phi$ given by the inclusion map into $G$. As observed in \cite{AMW} these include
all compact symmetric spaces (up to finite covers).
  \end{example}  
    \begin{remark}If the Lie group $G$ is abelian
 then these conditions imply
 that the two-form $\omega$ is automatically a symplectic form (see for instance \cite{boalch} and \cite{HJS}). 
    \end{remark}
    This definition can be generalized to consider $b^m$-type singularities along the hypersurface $Z$ of a $b^m$-manifold and, more generally, to consider $E$-manifolds introduced in \cite{ns2} and \cite{MS}.
 
    \begin{definition}
        A \textbf{singular quasi-Hamiltonian $G$-space of $b^m$-type} is a $b$-manifold $(M, Z)$ with a $G$-action $\rho$, an invariant $2$-form $\sigma\in ^{b^m}\Omega(M)$  and an equivariant moment map $\Phi : M \to G$ such that:
        \begin{itemize}
            \item[(i)] $\sigma$ is equivariantly closed: $d \sigma = - \Phi^* \chi$,
        
            \item[(ii)] the moment map condition is satisfied: $\iota(\upsilon_\xi) \sigma = \frac{1}{2}\Phi^*\left ( \theta^l + \theta^r, \xi \right )$,
        
            \item[(iii)] $\sigma$ is weakly non-degenerate: 
                $$\ker \sigma \cap \ker d \Phi = 0.$$
        \end{itemize}
    \end{definition}
    
     \begin{remark} If the Lie group $G$ is abelian
 then the singular two-form $\omega$ is automatically a $b^m$-symplectic form (or more generally an $E$-symplectic form if we replace the $b^m$-functions by $E$-functions). Then this definition of quasi-Hamiltonian generalizes the investigation of symplectic actions to the $b^m$-symplectic realm.
    \end{remark}
    
    \begin{remark}
     The form $d \sigma$ is  a smooth $3$-form in $\Omega^3(M)$ rather than a singular form in  $^{b^m} \Omega^3(M)$.
    \end{remark}
    \begin{lemma}\label{lem:7.1} The $b^m$-form 
       $\sigma$ can be decomposed as:
       $$\sigma=\alpha \wedge \frac{dt}{t^m}+\beta,$$
      \noindent where $\alpha$ is a closed smooth one-form and $\beta$ is a smooth $2$-form $\beta \in \Omega^2(M)$. 
      
    \end{lemma}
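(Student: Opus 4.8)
The plan is to argue locally on a tubular neighbourhood $U\cong Z\times(-\epsilon,\epsilon)$ of the critical hypersurface, with $t$ a defining function for $Z$, and to combine the two pieces of rigidity supplied by the quasi-Hamiltonian axioms: the smoothness of $d\sigma$ (axiom (i) together with the preceding Remark) and the smoothness of the contractions $\iota(\upsilon_\xi)\sigma$ (axiom (ii), which is available here precisely because $\Phi$ is group-valued, so that $\Phi^{*}(\theta^{l}+\theta^{r},\xi)$ is an honest smooth form). Away from $Z$ the form $\sigma$ is already smooth, so there one takes $\alpha=0$ and $\beta=\sigma$; the content is entirely near $Z$. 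Since $\sigma$ is a degree-two $b^m$-form, Taylor-expanding the smooth coefficient of its leading pole in $t$ yields the Laurent decomposition
\[
\sigma=\sum_{i=1}^{m}\frac{dt}{t^{i}}\wedge\pi^{*}\gamma_{i}+\beta ,
\]
with the $\gamma_i$ one-forms on $Z$ and $\beta$ a smooth two-form, exactly as in (\ref{eqn:newlaurent}). The leading coefficient $\gamma_m$ is the datum I want to identify, up to sign, with $\alpha$.

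Next I would feed in the smoothness of $d\sigma$. Because $d(dt/t^{i})=0$, differentiating term by term gives $d\sigma=-\sum_{i=1}^{m}\frac{dt}{t^{i}}\wedge d_{Z}\gamma_{i}+d\beta$. The Remark guarantees that $d\sigma=-\Phi^{*}\chi\in\Omega^{3}(M)$ is smooth; matching the poles of distinct orders $t^{-i}$ then forces $d_{Z}\gamma_{i}=0$ for every $i$. Hence each $\gamma_i$, and in particular $\alpha:=-\gamma_{m}$, is a closed one-form, which already secures the closedness assertion of the lemma.

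The heart of the matter is to suppress the sub-leading poles using the moment-map condition (ii). Since the action preserves $Z$, each fundamental vector field $\upsilon_\xi$ is tangent to $Z$, so $\iota(\upsilon_\xi)\,dt=0$ and
\[
\iota(\upsilon_\xi)\sigma=-\sum_{i=1}^{m}\frac{dt}{t^{i}}\,\gamma_{i}(\upsilon_\xi)+\iota(\upsilon_\xi)\beta .
\]
As the left-hand side equals the smooth form $\tfrac12\Phi^{*}(\theta^{l}+\theta^{r},\xi)$, every singular coefficient $\gamma_{i}(\upsilon_\xi)$ must vanish, i.e. each $\gamma_i$ annihilates the orbit directions. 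Combining this with the closedness from the previous step (and, where needed, the weak non-degeneracy (iii) to control $\ker\sigma$) I would show that the sub-leading coefficients $\gamma_1,\dots,\gamma_{m-1}$ vanish, so that only the top pole survives; then $\beta:=\sigma-\alpha\wedge\frac{dt}{t^{m}}$ is a genuine smooth two-form and the stated decomposition follows, a partition of unity passing from $U$ to $M$.

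I expect this last step to be the main obstacle. Closedness of the $\gamma_i$ by itself does \emph{not} make the lower-order poles disappear, and the contraction argument a priori constrains all coefficients equally, including $\gamma_m$, and only along the orbit directions. What makes the sub-leading terms genuinely vanish is the interplay of the two constraints — closedness coming from $d\sigma\in\Omega^{3}(M)$ and transverse smoothness of $\iota(\upsilon_\xi)\sigma$ coming from the group-valued moment map — and this is exactly the feature distinguishing the quasi-Hamiltonian case (where $\iota(\upsilon_\xi)\sigma$ is smooth) from the ordinary $b^m$-symplectic case (where it is the differential of a singular $b^m$-function). I would therefore isolate this as a separate sub-claim and prove it by descending induction on the pole order $i$, at each stage using that a closed one-form on $Z$ that annihilates all orbit directions and is compatible with the smoothness of $\iota(\upsilon_\xi)\sigma$ at order $t^{-i}$ is forced to be zero for $i<m$.
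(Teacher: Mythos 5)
Your opening steps do run parallel to the paper's proof, but the paper's route is structurally different and much shorter. The paper invokes the decomposition of Proposition 10 of \cite{guillemin2014symplectic} (in its $b^m$-incarnation): $\sigma=\alpha\wedge\frac{dt}{t^m}+\beta$ with \emph{both} $\alpha$ and $\beta$ smooth, which holds for \emph{any} degree-two $b^m$-form with no hypotheses at all, because the sub-leading poles are not killed but \emph{absorbed} into the smooth coefficient: $\frac{dt}{t^i}\wedge\pi^*\gamma_i=-\left(t^{m-i}\pi^*\gamma_i\right)\wedge\frac{dt}{t^m}$, and $t^{m-i}\pi^*\gamma_i$ is a perfectly good smooth one-form. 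The entire content of the lemma is then the closedness of $\alpha$, which the paper extracts from axiom (i) alone: $d\sigma=-\Phi^*\chi$ is smooth, so the singular part $d\alpha\wedge\frac{dt}{t^m}$ of $d\sigma$ must disappear. Axioms (ii) and (iii) are never used. Your derivation of $d_Z\gamma_i=0$ for every $i$ from the smoothness of $d\sigma$ is exactly the analogue of this step; everything you do afterwards is an attempt to prove something the paper neither proves nor needs.

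That extra part is where the genuine gap lies, and you flag it yourself. First, the vanishing of $\gamma_1,\dots,\gamma_{m-1}$ is strictly stronger than the statement: the $\alpha$ of the lemma is a smooth one-form on $M$, allowed to depend on $t$, so non-zero lower-order poles are compatible with the conclusion. Second, your sketch does not prove it. The two constraints you extract --- each $\gamma_i$ is closed on $Z$ and annihilates the orbit directions --- are far from forcing $\gamma_i=0$: for a transverse $S^1$-action on $Z\cong S^1\times L$, any closed one-form pulled back from $L$ satisfies both. Moreover, once $\gamma_i(\upsilon_\xi)=0$ for all $\xi$, the order-$t^{-i}$ term of $\iota(\upsilon_\xi)\sigma$ is already zero, so ``compatibility with smoothness of the contraction at order $t^{-i}$'' imposes no further condition; your descending induction therefore has no remaining input and is in effect a restatement of the desired conclusion. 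Third, there is a symmetry problem: every constraint you establish holds for all $i$, \emph{including} $i=m$, so an argument deducing $\gamma_i=0$ for $i<m$ from these constraints alone would equally force $\gamma_m=0$, contradicting the fact that $\sigma$ has a genuine pole of order $m$; nothing in your proposal identifies what breaks this symmetry. Finally, two smaller defects: $\iota(\upsilon_\xi)\,dt=0$ holds only on $Z$ itself (the fundamental fields are tangent to $Z$, not to every level set of $t$), so your contraction identity acquires cross terms $\frac{\upsilon_\xi(t)}{t^i}\pi^*\gamma_i$ of lower pole order that you have not tracked; and gluing with a partition of unity at the end destroys closedness, since $d(\rho\alpha)=d\rho\wedge\alpha\neq 0$, so the globalization should also be done as in the paper's formulation rather than by cutoffs.
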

    
    \begin{proof} 
    We use Proposition 10 in \cite{guillemin2014symplectic}, the $b^m$-form 
       $\sigma$ can be decomposed as:
       $$\sigma=\alpha \wedge \frac{dt}{t^m}+\beta.$$
       
       Now from the first condition in the definition of quasi-Hamiltonian we know that $d\sigma$ is smooth (as $d \sigma = - \Phi^* \chi$, where $\chi \in \Omega^3(G)$).

        The form $d \sigma$ equals the form $d \beta$ which is a smooth form $\in \Omega^2(M)$.  This automatically implies that $d\alpha=0$ as otherwise $d\sigma$ would have a singular term.  
        
    \end{proof}
    
    The fact that $\alpha$ is closed allows to conclude that the critical set has a mapping torus structure thanks to Tischler theorem \cite{T}.  This mapping torus inherits the quasi-Hamiltonian space structure from the ambient space (in the same way the critical set of a $b^m$-symplectic manifold inherits a regular Poisson structure which is cosymplectic). Along the same lines we could prove that the critical set is   a quasi-Hamiltonian mapping torus.
    
    In order to conclude our reduction theorem we just need to apply Tischler theorem to the critical set. We recall it here for convenience:
    
    \begin{theorem}\label{thm:tic}
        Let $M^n$ be a closed  manifold endowed with a $1$-form $\beta$ which is nowhere vanishing. Then $M^n$ fibers over a circle $S^1$.
    \end{theorem}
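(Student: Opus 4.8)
The plan is to prove the standard Tischler fibration theorem, keeping in mind that the one-form in question is not merely nowhere vanishing but also \emph{closed} (as is the form $\alpha$ furnished by Lemma \ref{lem:7.1}, to which this result is applied). So let $\beta$ be a closed, nowhere vanishing one-form on the closed manifold $M^n$. The strategy is to perturb $\beta$ inside a small $C^0$-neighbourhood to a closed one-form with integer periods, which then integrates to a submersion $M\to S^1$, after which one invokes Ehresmann's fibration theorem.

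First I would record the cohomology class $[\beta]\in H^1(M;\R)$ and choose closed one-forms $\eta_1,\dots,\eta_b$, with $b=b_1(M)$, whose classes form a basis of the free part of $H^1(M;\Z)$; that is, each $\eta_i$ has integer periods over integral $1$-cycles. Writing $[\beta]=\sum_i c_i[\eta_i]$ we obtain $\beta=\sum_i c_i\eta_i+dh$ for some $h\in C^\infty(M)$. The heart of the argument is the approximation step. Since $M$ is compact and $\beta$ is nowhere vanishing, being nowhere vanishing is an \emph{open} condition in the $C^0$-topology, so there is a threshold $\delta>0$ such that any one-form which is $\delta$-close to $\beta$ is still nowhere vanishing. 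Choosing rationals $q_i$ sufficiently close to the reals $c_i$ and setting $\beta'=\sum_i q_i\eta_i+dh$, the difference $\beta'-\beta=\sum_i(q_i-c_i)\eta_i$ is as $C^0$-small as we like; hence $\beta'$ is closed, nowhere vanishing, and has rational periods. Multiplying by a common denominator $N$, the form $N\beta'$ is closed with integer periods.

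Next I would integrate. Because $N\beta'$ has integer periods, the assignment $f(x)=\int_{x_0}^{x} N\beta' \ (\mathrm{mod}\ \Z)$ is a well-defined smooth map $f:M\to \R/\Z=S^1$ with $f^*d\theta=N\beta'$, where $d\theta$ is the angular form on $S^1$. Since $N\beta'$ is nowhere vanishing, $df$ is nowhere zero, so $f$ has full rank and is a submersion onto the one-dimensional $S^1$. Finally, as $M$ is compact the map $f$ is proper, and a proper submersion is a locally trivial fibre bundle by Ehresmann's fibration theorem; therefore $M^n$ fibers over $S^1$, as claimed.

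The main obstacle---and the only genuinely nontrivial point---is the approximation step: one must perturb $\beta$ so that it acquires integer periods \emph{while} remaining nowhere vanishing. This is precisely where the closedness of $\beta$ and the compactness of $M$ enter, through the density of rational classes in $H^1(M;\R)$ combined with the openness of the nonvanishing condition. Everything downstream---the integration to a circle-valued map and the application of Ehresmann's theorem---is then formal.
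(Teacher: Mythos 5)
Your proof is correct, but there is nothing in the paper to compare it with: the paper does not prove this theorem at all --- it states it as a recollection of a classical result and refers to Tischler's original article \cite{T}. Your argument is in fact the standard (essentially Tischler's original) proof: use compactness to note that nonvanishing is a $C^0$-open condition, approximate the closed form by a closed form with rational periods inside that neighbourhood, rescale to integer periods, integrate to a circle-valued submersion, and finish with Ehresmann's fibration theorem; all of these steps are sound. One thing you did deserves emphasis: you restored the hypothesis that $\beta$ be \emph{closed}, which the paper's statement omits. That hypothesis is genuinely needed --- as literally written the statement is false, since for example $S^3$ carries a nowhere vanishing $1$-form (any contact form) but cannot fiber over $S^1$: a fibration $f\colon M\to S^1$ would make $f^*d\theta$ a closed nowhere vanishing $1$-form, which is impossible when $H^1_{dR}(M)=0$. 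Closedness is also precisely what your approximation step uses (you perturb the de Rham class), and it is available in the paper's application of the theorem, since the form $\alpha$ supplied by Lemma \ref{lem:7.1} is closed.
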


This leads us to the following proposition:

\begin{prop} \label{prop:fibr} Let $(M,G,\sigma)$ be a closed quasi-Hamiltonian space of $b^m$-type, and let $Z$ be its critical set. Then,

\begin{enumerate}
    \item $Z$ fibers over a circle $S^1$.
    \item If the group $G$ acts transversally on the fibers of then  the group then $G$ is either of the form $S^1 \times H$ or $S^1 \times H \phantom{a} mod \phantom{a} \Gamma$, where $\Gamma = \mathbb Z_l \times \mathbb Z_k$ and $\mathbb Z_k$ is a non-trivial cyclic subgroup of $H$.
\end{enumerate}
    
\end{prop}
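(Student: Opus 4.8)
The plan is to prove the two assertions in turn, deducing (1) from Lemma~\ref{lem:7.1} together with Tischler's theorem (Theorem~\ref{thm:tic}), and (2) from the structure theorem of Braddell--Kiesenhofer--Miranda (Theorem~\ref{th:bmAction}).

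For (1) I would begin with the decomposition $\sigma=\alpha\wedge\frac{dt}{t^m}+\beta$ furnished by Lemma~\ref{lem:7.1}, in which $\alpha$ is a closed smooth one-form and $\beta\in\Omega^2(M)$ is smooth. Pulling back to the critical set $Z=\{t=0\}$ keeps $\alpha|_Z$ closed, so the whole point is to show it is nowhere vanishing. The clean observation is that contracting $\sigma$ with the $b^m$-normal field $t^m\frac{\partial}{\partial t}$ gives, along $Z$, exactly $\iota_{t^m\partial_t}\sigma|_Z=-\alpha|_Z$, since the smooth term $\beta$ contributes a factor $t^m$ that vanishes on $Z$. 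Thus $t^m\frac{\partial}{\partial t}$ lies in $\ker\sigma$ at any point of $Z$ where $\alpha|_Z$ vanishes; as this field is annihilated by $d\Phi$ along $Z$, such a zero of $\alpha|_Z$ would violate the weak non-degeneracy condition $\ker\sigma\cap\ker d\Phi=0$. Hence $\alpha|_Z$ is a nowhere-vanishing closed one-form on the compact manifold $Z$ (compact because $M$ is closed and $Z$ is a closed hypersurface), and Theorem~\ref{thm:tic} produces the fibration $Z\to S^1$.

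For (2) I would note that the fibres of this fibration are the leaves $\mathcal L$ of the foliation $\ker(\alpha|_Z)$, so that $Z$ is realized as a mapping torus of $\mathcal L$ exactly as in Definition~\ref{mappingtorusstructure} for the $b^m$-symplectic case. Under the hypothesis that $G$ acts transversally to the fibres, the geometric situation is formally identical to the one treated in Theorem~\ref{th:bmAction}: the transverse part of the action integrates, by compactness of $G$ and the circle structure of the base, to a distinguished circle factor $S^1$; the complementary part preserves each leaf and yields the subgroup $H$; and the monodromy of the mapping torus, realized as a deck transformation of a finite cover $S^1\times\mathcal L\to Z$, accounts for the possible discrete quotient. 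Transcribing the Braddell--Kiesenhofer--Miranda argument verbatim then shows that $G$ is of the form $S^1\times H$ or $S^1\times H\phantom{a} mod\phantom{a}\Gamma$ with $\Gamma=\mathbb Z_l\times\mathbb Z_k$ and $\mathbb Z_k$ a non-trivial cyclic subgroup of $H$.

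The step I expect to be the main obstacle is the nowhere-vanishing of $\alpha|_Z$. Although the contraction identity above makes the mechanism transparent, condition (iii) is only a \emph{weak} non-degeneracy, strictly weaker than the full $b^m$-symplectic non-degeneracy that trivializes the analogous claim for $b^m$-symplectic manifolds; one must therefore interpret the kernels $\ker\sigma$ and $\ker d\Phi$ consistently in the $b^m$-tangent bundle and check that $t^m\frac{\partial}{\partial t}$ is a genuine nonzero $b^m$-normal direction lying in $\ker d\Phi$ along \emph{all} of $Z$, so that the form is nowhere vanishing rather than merely generically so, as Tischler's theorem requires. A secondary technical point is to verify that the transverse $G$-action lifts equivariantly through the finite cover, so that Theorem~\ref{th:bmAction} applies without modification.
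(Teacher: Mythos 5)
Your proposal follows the same overall skeleton as the paper's proof: Lemma~\ref{lem:7.1} supplies the decomposition $\sigma=\alpha\wedge\frac{dt}{t^m}+\beta$ with $\alpha$ closed, Tischler's theorem (Theorem~\ref{thm:tic}) then fibers $Z$ over $S^1$, and part (2) is obtained by transcribing the argument of Theorem~\ref{th:bmAction} from \cite{BKM}. Where you genuinely diverge is also where you improve on the paper: the paper does \emph{not} prove that $\alpha$ restricts to a nowhere-vanishing form on $Z$ — it says only that ``if $\sigma$ satisfies minimal non-degeneracy conditions, we may assume that $\alpha$ is nowhere vanishing,'' effectively treating this as an added hypothesis. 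Your contraction argument closes that gap: along $Z$ one has $\iota_{t^m\partial_t}\sigma\vert_Z=-\alpha\vert_Z$ (the $\beta$-term carries a factor $t^m$, and the residual $dt$-component pairs to zero with $t^m\frac{\partial}{\partial t}$), while $t^m\frac{\partial}{\partial t}$ is a nonzero section of ${}^{b^m}TM$ along $Z$ that is killed by $d\Phi$ after applying the anchor map ${}^{b^m}TM\to TM$; hence a zero of $\alpha\vert_Z$ would produce a nonzero element of $\ker\sigma\cap\ker d\Phi$, contradicting axiom (iii) of the definition of a singular quasi-Hamiltonian space. This is correct under the reading you flag — both kernels taken inside ${}^{b^m}TM$ — which is the only interpretation under which axiom (iii) is meaningful for a $b^m$-form, and it is the same mechanism by which non-degeneracy forces $\alpha_m$ to be nowhere vanishing in equation (\ref{eqn:newlaurent}). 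So your argument derives from the axioms what the paper assumes; it buys a cleaner statement (no auxiliary ``minimal non-degeneracy'' hypothesis) at no extra cost. One small imprecision in your part (2): the fibers produced by Tischler's theorem are level sets of a perturbed closed form with integral periods, not in general the leaves of $\ker(\alpha\vert_Z)$, which may be dense when $\alpha\vert_Z$ has incommensurable periods; this does not affect the conclusion, since the reduction to the argument of \cite{BKM} only uses that $G$ acts transversally to a fibration of $Z$ over $S^1$.
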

    
    \begin{proof}
       For the first part of the proof as a consequence of lemma  \ref{lem:7.1} the form $\alpha$ is closed. If $\sigma$ satisfies minimal non-degeneracy conditions, we may assume that $\alpha$ is nowhere vanishing. In view of Tischler's theorem \ref{thm:tic}, $M$ fibers over a circle $S^1$.
        The second part of the proposition is proved  mutatis mutandis as theorem \ref{th:bmAction} in \cite{BKM}.
    \end{proof}
    
    In view of this result it is possible to talk about transverse $S^1$-action in the quasi-Hamiltonian context as we did in former sections for $b^m$-Hamiltonian actions.
    From Lemma \ref{lem:7.1} and Proposition \ref{prop:fibr}, we get an immediate corollary:
    \begin{cor} \label{cor:bmHam}
       In a neighbourhood of the critical set $Z$, the $b^m$-form $\sigma$ can be written as $d \theta \wedge \frac{dt}{t^m} + \beta$, where $\theta$ is coordinate on $S^1$. The corresponding $S^1$-action on the covering of $M$ is $b^m$-Hamiltonian.
    \end{cor}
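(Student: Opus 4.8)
The plan is to obtain the asserted normal form by feeding the decomposition of Lemma \ref{lem:7.1} into the fibration structure of Proposition \ref{prop:fibr}, and then to read off $b^m$-Hamiltonicity by a single contraction, mirroring the passage from Lemma \ref{keylemma} to the $b^m$-slice theorem. First I would invoke Lemma \ref{lem:7.1} to write, in a tubular neighbourhood of $Z$,
\[
\sigma = \alpha \wedge \frac{dt}{t^m} + \beta ,
\]
where $\alpha$ is a closed smooth one-form and $\beta \in \Omega^2(M)$ is smooth. Under the minimal non-degeneracy hypothesis $\alpha$ is nowhere vanishing along $Z$, so Proposition \ref{prop:fibr} (through Tischler's Theorem \ref{thm:tic}) presents $Z$ as a fibration over $S^1$ equipped with a transverse $S^1$-action.

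Second, I would pass to the covering of $M$ on which this fibration trivialises. There the pulled-back closed, nowhere-vanishing one-form $\alpha$ agrees up to a constant with the angular form of the base circle; absorbing that constant into the defining function $t$, I may take $\alpha = d\theta$ with $\theta$ the coordinate on the $S^1$-factor and the transverse action generated by $\frac{\partial}{\partial \theta}$. This yields the claimed expression
\[
\sigma = d\theta \wedge \frac{dt}{t^m} + \beta .
\]

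Third, to establish that this $S^1$-action is $b^m$-Hamiltonian, I would exploit that the relevant $S^1$ is the \emph{abelian} factor of $G$. For its infinitesimal generator $\xi$ the moment map condition (ii) gives $\iota_{\frac{\partial}{\partial\theta}}\sigma = \tfrac12\Phi^*(\theta^l+\theta^r,\xi)$; since $\chi$ vanishes on an abelian subgroup and the Maurer--Cartan forms are closed there, this one-form is closed, so $\iota_{\frac{\partial}{\partial\theta}}\sigma$ is genuinely closed (in contrast with the generic quasi-Hamiltonian situation). On the covering the $S^1$-valued moment-map component then lifts to a real-valued primitive, which is exactly why the statement is phrased on the covering of $M$. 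Matching this primitive against the normal form and using
\[
\iota_{\frac{\partial}{\partial\theta}}\sigma = \frac{dt}{t^m} + \iota_{\frac{\partial}{\partial\theta}}\beta , \qquad \frac{dt}{t^m} = d\!\left(\frac{-1}{(m-1)t^{m-1}}\right)
\]
for $m>1$ (and $\frac{dt}{t}=d(\log|t|)$ for $m=1$) shows the primitive is a genuine element of ${}^{b^m}\mathcal{C}^\infty(M)$, which is precisely the $b^m$-Hamiltonian condition of Definition \ref{def:bmHamMM}.

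The main obstacle I anticipate is the normalisation $\alpha = d\theta$: Tischler's theorem only produces a fibration from a rational approximation of $\alpha$ rather than from $\alpha$ itself, so genuine care is needed to deform the closed one-form to an honest integral angular form and to carry $\beta$ along $S^1$-equivariantly, in the spirit of the equivariant argument behind Lemma \ref{keylemma}. A secondary point requiring attention is the lift of the $S^1$-valued moment map to a ${}^{b^m}\mathcal{C}^\infty$-valued function on the covering; this is where the abelianness of the $S^1$-factor (and the consequent vanishing of $\chi$ and closedness of the Maurer--Cartan forms) is essential, and it is what separates the present $b^m$-Hamiltonian conclusion from the merely quasi-Hamiltonian input. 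Once these two points are secured the contraction step is routine and reproduces the computation underlying the rest of the paper.
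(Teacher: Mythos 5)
Your proposal is correct and follows essentially the same route as the paper, which states this corollary without a written proof as an ``immediate'' consequence of Lemma \ref{lem:7.1} (the decomposition $\sigma=\alpha\wedge\frac{dt}{t^m}+\beta$ with $\alpha$ closed) and Proposition \ref{prop:fibr} (the Tischler fibration and transverse $S^1$-action), followed implicitly by the contraction computation $\iota_{\frac{\partial}{\partial\theta}}\sigma=\frac{dt}{t^m}+\iota_{\frac{\partial}{\partial\theta}}\beta$ with primitive $\frac{-1}{(m-1)t^{m-1}}$ (resp.\ $\log|t|$) that reappears in the subsequent lemma computing $\Phi_S$. Your fleshing-out is sound, and the two points you flag --- normalising the closed one-form to $c\,d\theta$ despite Tischler's rational approximation (handled by averaging and an equivariant Moser argument as in Lemma \ref{keylemma}) and lifting the $S^1$-valued moment map to a ${}^{b^m}\mathcal{C}^\infty$ primitive on the covering using abelianness --- are exactly the details the paper suppresses.
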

    
    \begin{remark} \label{rem:split}
        Notice that singular quasi-Hamiltonian moment map for the $G$-action on the covering of $M$ splits into two independent components $(\Phi_S, \Phi_H)$ corresponding to the $S^1$- and the $H$-action on the covering respectively.
    \end{remark}
    
    Now we can compute the moment map for a $b^m$-Hamiltonian $S^1$-action. The $^{b^{m}} \mathbb R$- or $^{b^{m}} S^1$-valued moment map of Example \ref{ex:bmsphere} $\mu_S = \frac{-1}{(m-1)t^{m-1}}$ for $m>1$ and $\mu_S = \log |t|$ for $m = 1$. We can extend the exponential map from $^{b^{m}} \mathbb R$ and $^{b^{m}} S^1$ to points at infinity using a standard compactification procedure. By abuse of notation we will denote this extension by $\exp$.
    
    \begin{lemma}
        The $b^m$-Hamiltonian moment map for the transverse $S^1$-action is $\Phi_S = \exp \left ( \frac{-1}{(m-1)t^{m-1}} \right )$ for $m > 1$ and $\Phi_S = |t|$ for $m = 1$. 
    \end{lemma}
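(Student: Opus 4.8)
The plan is to deduce the group-valued moment map from the ordinary $b^m$-Hamiltonian moment map already computed in Example \ref{ex:bmsphere}, using that for the abelian group $S^1$ a group-valued moment map is obtained from a Lie-algebra-valued one by composing with the exponential map. First I would invoke Corollary \ref{cor:bmHam} to write, on the covering of $M$ in a neighbourhood of $Z$, $\sigma = d\theta \wedge \frac{dt}{t^m} + \beta$ with $\partial_\theta$ the fundamental vector field of the transverse $S^1$-action; after the normalization of Lemma \ref{keylemma} we may assume $\iota_{\partial_\theta}\beta = 0$, so that the $S^1$-part of $\sigma$ decouples from the $H$-part in the sense of Remark \ref{rem:split}.

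Next I would contract with the fundamental vector field: $\iota_{\partial_\theta}\sigma = \frac{dt}{t^m}$. Since $\frac{dt}{t^m} = d\left(\frac{-1}{(m-1)t^{m-1}}\right)$ for $m>1$ and $\frac{dt}{t} = d(\log|t|)$ for $m=1$, the $b^m$-Hamiltonian (Lie-algebra-valued) moment map is $\mu_S = \frac{-1}{(m-1)t^{m-1}}$, respectively $\mu_S = \log|t|$, exactly as in Example \ref{ex:bmsphere}.

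I would then set $\Phi_S = \exp(\mu_S)$ and verify the quasi-Hamiltonian moment map condition (ii). Because $S^1$ is abelian its Lie bracket vanishes, hence $\chi = 0$ (so condition (i) reduces to $d\sigma = 0$, which holds as $\sigma$ is $b^m$-symplectic) and $\theta^l = \theta^r$; under the identification $\mathfrak{g} \cong \mathbb{R}$ one has $\Phi_S^*\theta^l = d\mu_S$. Therefore $\frac{1}{2}\Phi_S^*(\theta^l + \theta^r, \xi) = d\langle\mu_S,\xi\rangle = \iota(\upsilon_\xi)\sigma$, which is precisely (ii); weak non-degeneracy (iii) is inherited from the non-degeneracy of the $b^m$-symplectic model. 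This produces $\Phi_S = \exp\left(\frac{-1}{(m-1)t^{m-1}}\right)$ for $m>1$ and $\Phi_S = \exp(\log|t|) = |t|$ for $m=1$, as claimed.

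The main obstacle is making sense of $\Phi_S$ along the critical set $Z = \{t=0\}$, where $\mu_S$ blows up. Here I would use the extension of the exponential map to the points at infinity of $^{b^m}\mathbb{R}$ and $^{b^m}S^1$ recalled just before the statement, and check that this compactified $\exp$ assigns a well-defined limiting value across $Z$ (for instance $|t| \to 0$ as $t \to 0$ when $m=1$), so that $\Phi_S$ extends continuously over $Z$ and defines a genuine group-valued map on the covering of $M$.
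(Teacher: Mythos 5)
Your proposal is correct and follows essentially the same route as the paper: the paper treats this lemma as an immediate consequence of the Lie-algebra-valued moment map computed in Example \ref{ex:bmsphere} (equivalently, of the normal form in Corollary \ref{cor:bmHam}) composed with the exponential map extended to the points at infinity of $^{b^m}\mathbb{R}$ and $^{b^m}S^1$, which is exactly the construction of Lemma \ref{lemma:Exp} specialized to the abelian group $S^1$. Your explicit checks (that $\chi = 0$ and $\theta^l = \theta^r$ for the abelian group, that $\Phi_S^*\theta^l = d\mu_S$, and the antiderivative computation $\frac{dt}{t^m} = d\bigl(\frac{-1}{(m-1)t^{m-1}}\bigr)$, resp.\ $d(\log|t|)$) merely fill in details the paper leaves implicit.
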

    
    In particular, from now on we can talk about the $S^1$-component when referring to quasi-Hamiltonian actions which are \emph{transverse} (in the sense that the action on $Z$ acts transversally to the fibers of proposition \ref{prop:fibr} over $S^1$). Such a component, in view of \ref{cor:bmHam} is automatically $b^m$-Hamiltonian.

    Analogously to  the smooth case, one can construct a $b^m$-quasi Hamiltonian space out of a $b^m$-Hamiltonian space. 
    To do that, we first provide a relevant construction of a $2$-form $\varpi$ that is $G$-invariant and $d \varpi = - \exp^* \chi$ (see Lemma 3.3 in \cite{AMM}):
    $$
        \varpi = \frac{1}{2} \int \limits_{0}^{1} \left ( exp_s^* \theta^l, \frac{\partial}{\partial s} \exp_s^* \theta^l\right ) ds,
    $$

    where for $s \in \mathbb{R}$, the map $\exp_s : \mathfrak{g} \to G$ is defined by $\exp_s(\eta) = \exp(s \eta)$. This form is commonly used in the investigation of the correspondences between Hamiltonian and quasi-Hamiltonian spaces. We provide the $b^m$-generalizations of the statements together with the references to the original statements:
    
    \begin{lemma}[\cite{AMM}, proposition 3.4] \label{lemma:Exp}
        Let $(M, \rho, \omega, \Phi)$ be a Hamiltonian $G$-space. Then $M$ with $2$-form $\sigma = \omega + \mu^* \varpi$ and moment map $\Phi = \exp(\mu)$ satisfies all axioms of a quasi-Hamiltonian $G$-space except possibly the non-degeneracy condition. If the differential $d_\xi \exp$ is bijective for all $\xi \in \mu(M)$, then the non-degeneracy condition is satisfied as well, and $(M, \rho, \sigma, \Phi)$ is a $b^m$-quasi Hamiltonian $G$-space.
    \end{lemma}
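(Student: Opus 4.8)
The plan is to follow the proof of Proposition 3.4 in \cite{AMM} \emph{mutatis mutandis}, checking the three axioms of a singular quasi-Hamiltonian space in turn and verifying at each step that the $b^m$-singularities of $\omega$ and $\mu$ do not obstruct the computation. The two facts about the form $\varpi$ that drive the argument are those of Lemma 3.3 in \cite{AMM}: the equivariant closedness $d\varpi=-\exp^*\chi$ (already recorded above), together with the infinitesimal Maurer--Cartan identity
\begin{equation}\label{eq:pp-varpiMC}
    \iota_{\xi_{\mathfrak g}}\varpi = \tfrac{1}{2}\exp^*(\theta^l+\theta^r,\xi) - d\langle\cdot,\xi\rangle,
\end{equation}
where $\xi_{\mathfrak g}$ is the fundamental vector field of the adjoint action of $G$ on $\mathfrak g$ and $\langle\cdot,\xi\rangle$ denotes the linear function $\eta\mapsto(\eta,\xi)$ on $\mathfrak g$.

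First I would verify axiom (i). Since $\omega$ is a closed $b^m$-form and exterior differentiation commutes with pullback by the (singular) map $\mu$, one computes $d\sigma = d\omega+\mu^*(d\varpi)=\mu^*(-\exp^*\chi)=-(\exp\circ\mu)^*\chi=-\Phi^*\chi$. This is consistent with the Remark above asserting $d\sigma\in\Omega^3(M)$: the singular part of $\sigma$ is annihilated by $d$ precisely because $\omega$ is $b^m$-closed, while $\chi\in\Omega^3(G)$ is smooth. For axiom (ii) I would use that $\mu$ is $G$-equivariant, so the fundamental field $\upsilon_\xi$ on $M$ is $\mu$-related to $\xi_{\mathfrak g}$ and hence $\iota_{\upsilon_\xi}\mu^*\varpi=\mu^*\iota_{\xi_{\mathfrak g}}\varpi$. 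Combining the $b^m$-Hamiltonian condition $\iota_{\upsilon_\xi}\omega=d\langle\mu,\xi\rangle$ with the pullback of \eqref{eq:pp-varpiMC} gives
\begin{equation}\label{eq:pp-mm}
    \iota_{\upsilon_\xi}\sigma = d\langle\mu,\xi\rangle + \tfrac{1}{2}\Phi^*(\theta^l+\theta^r,\xi) - d\langle\mu,\xi\rangle = \tfrac{1}{2}\Phi^*(\theta^l+\theta^r,\xi),
\end{equation}
so the two exact terms cancel and the moment map condition holds verbatim. Both checks are purely formal and survive the passage to $b^m$-forms; this already establishes all axioms except non-degeneracy.

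The genuinely new point, which I expect to be the main obstacle, is twofold. First, $\Phi=\exp\circ\mu$ and $\mu^*\varpi$ must be given meaning even though $\mu$ blows up along $Z$; this is exactly what the extended exponential of the preceding lemma accomplishes, sending the points at infinity of $^{b^m}\mathbb R$ (resp.\ $^{b^m}S^1$) to well-defined points of $G$, so that $\Phi$ extends across $Z$ and $\mu^*\varpi$ is a genuine $b^m$-form. Second, the weak non-degeneracy $\ker\sigma_x\cap\ker d\Phi_x=0$ must be established under the hypothesis that $d_\xi\exp$ is bijective for all $\xi\in\mu(M)$. Here I would invoke the splitting of Remark \ref{rem:split}: on the cover $\mu=(\mu_S,\mu_H)$ and $\sigma$ decomposes accordingly into a twisted $b^m$-block built from $\mu_S$ and an ordinary Hamiltonian block built from $\omega_H$ and $\mu_H$. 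On the honestly symplectic $H$-block the classical argument of \cite{AMM} applies once $d_\xi\exp$ is invertible, while on the $S^1$-block non-degeneracy follows from the non-vanishing of the highest modular weight as in Corollary \ref{cor:bmHam}. The delicate step is to show that these two blocks cannot conspire to produce a common vector in $\ker\sigma\cap\ker d\Phi$; this amounts to a transversality check that the bijectivity of $d_\xi\exp$ and the non-degeneracy of $\omega$ on the symplectic slice together rule out such a vector, and this is where I expect the bulk of the work to lie.
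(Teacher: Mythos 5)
Your verification of axioms (i) and (ii) is correct and is essentially the paper's own proof: both arguments rest on the two properties of $\varpi$ from Lemma 3.3 of \cite{AMM}, namely $d\varpi=-\exp^*\chi$ for the closedness computation $d\sigma=d\omega+\mu^*d\varpi=-\Phi^*\chi$, and the contraction identity whose pullback makes the two exact terms $d\langle\mu,\xi\rangle$ cancel in the moment map condition. If anything, your write-up is cleaner: the paper's displayed formulas interchange $\mu^*$ and $\Phi^*$ in a couple of places, while your chain of equalities is the correct one.

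The gap is in axiom (iii). The paper disposes of weak non-degeneracy in one sentence: the original argument of \cite{AMM} goes through verbatim, with the sole modification that the vectors are taken in $^{b^m}TM \subset TM$. Concretely: since $\Phi=\exp\circ\mu$ and $d_\xi\exp$ is bijective for all $\xi\in\mu(M)$, one has $\ker d\Phi_x=\ker d\mu_x$ as subspaces of $^{b^m}T_xM$; if $v$ lies in $\ker\sigma_x\cap\ker d\mu_x$, then $\iota_v\omega=\iota_v\sigma-\mu^*\bigl(\iota_{d\mu_x(v)}\varpi\bigr)=0$, and the non-degeneracy of the $b^m$-symplectic form $\omega$ as a form on $^{b^m}T_xM$ forces $v=0$. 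This is pointwise linear algebra and needs no further structure. Your proposed route instead invokes the splitting $\mu=(\mu_S,\mu_H)$ of Remark \ref{rem:split}, the non-vanishing of the highest modular weight, and Corollary \ref{cor:bmHam}. This is problematic on two counts. First, those ingredients exist only under hypotheses that are not part of the lemma (a transverse action of a group of the form $(S^1\times H)/\Gamma$, a closed manifold, non-zero highest modular weight), whereas the lemma must hold for an arbitrary $b^m$-Hamiltonian $G$-space --- in particular also when the modular weights vanish or the action is not transverse. Second, even granting that setting, you leave the decisive step unproved: the ``transversality check'' that the $S^1$-block and the $H$-block produce no common vector in $\ker\sigma\cap\ker d\Phi$ is exactly the content of the non-degeneracy claim, and declaring it to be where the bulk of the work lies does not discharge it. The correct observation is that no such block decomposition is needed at all once the kernels are computed in the $b^m$-tangent bundle, which is precisely the paper's one-line remark.
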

    
    \begin{proof} 
        Equivariance of $\mu$ follows from the definition and the fact that $\varpi$ is an invariant form. The proof of the first condition that $\sigma$ is equivariantly closed is straightforward:
        $$
            d \sigma = d \omega + d \Phi^* \varpi = 0 - d \Phi^* \exp^* \chi = - \mu^* \chi.
        $$
        The moment map condition:
        $$
            \iota (\upsilon_\xi)\sigma = d (\mu, \xi) + \frac{1}{2} \mu^* \exp^* (\theta^l + \theta^r, \xi)- d (\mu, \xi) = \frac{1}{2} \Phi^* (\theta^l + \theta^r, \xi).
        $$
        The proof of the non-degeneracy condition completely follows the original proof with the only minor distinction that $\upsilon$ should be taken in $^{b^m}TM \subset TM$.
    \end{proof}
    
    \subsection{Examples}
    One of the interesting examples of quasi-Hamiltonian spaces comes from the moduli space of flat connections on surfaces. In \cite{AMM}, the authors show that the moduli space of flat connections $\mathcal M (\Sigma)$ carries a quasi-Hamiltonian structure. Consider a compact, connected surface $\Sigma$ of genus $g$ with boundary $\partial \Sigma = S^1$. The space of flat $G$-connections $\mathcal A_{flat}(\Sigma)$ on $\Sigma \times G$ is invariant under the gauge group $\mathcal G(\Sigma)$ action. The space of connections $\mathcal A(\Sigma) = \Omega^1 (\Sigma, \mathfrak{g})$  on the trivialized principal $G$-bundle $G \times \Sigma \longrightarrow \Sigma$ is a symplectic manifold, equipped with a Hamiltonian action of the gauge group $\mathcal G (\Sigma) = Map (\Sigma, G )$. The moment map $\mu : \mathcal A(\Sigma) \longrightarrow Lie(\mathcal G(\Sigma))^* = \Omega^2(\Sigma, \mathfrak g)$ is given by $\mu(A) = F_A$ , where $F_A$ is the curvature of the connection $A$. A normal subgroup $\mathcal G(\Sigma, \partial \Sigma)$ of the gauge group $\mathcal G(\Sigma)$ is defined by $\mathcal G (\Sigma, \partial \Sigma ) = {\gamma \in \mathcal G(\Sigma) | \gamma |_{\partial \Sigma} = e}$. The reduced space $X := \mathcal A (\Sigma) // \mathcal G (\Sigma, \partial \Sigma)$ is a quasi-Hamiltonian $G$-space with proper moment map $\phi$. For the $b^m$-symplectic analogue of this example in the abelian case see \ref{Sec:AB}.
    
    Other constructions are directly given in \cite{boalch} following \cite{AMM}. In particular, the following theorem characterizes when a product manifold is a quasi-Hamiltonian space with a product group. This will be specially relevant for our purposes.

\begin{theorem}[\cite{AMM}]\label{thm:newfusion}
Let $M$ be a quasi-Hamiltonian $G\times G\times H$-space,
with moment map $\Phi=(\Phi_1,\Phi_2,\Phi_3)$.
Let  the group $G\times H$ act by
the diagonal embedding described as $(g,h)\to (g,g,h)$.
Then $M$ with the two-form
\begin{equation} \label{eqn: fusion 2form}
\hat{\sigma}= \sigma - \frac{1}{2}(\Phi_1^* \theta^l, \Phi_2^* \theta^r)
\end{equation}
and moment map
$$\hat{\Phi} = (\Phi_1\cdot \Phi_2,\Phi_3):M\to G\times H$$
is a quasi-Hamiltonian $G\times H$-space.

\end{theorem}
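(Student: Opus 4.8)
The plan is to verify the three defining axioms of a quasi-Hamiltonian space directly for the pair $(\hat\sigma,\hat\Phi)$ under the diagonal $G\times H$-action, reducing everything to one structural fact about the group multiplication $m:G\times G\to G$: the multiplicativity of the Cartan $3$-form,
\[
m^*\chi=\mathrm{pr}_1^*\chi+\mathrm{pr}_2^*\chi+\tfrac12\,d\bigl(\mathrm{pr}_1^*\theta^l,\mathrm{pr}_2^*\theta^r\bigr),
\]
the sign being the one forced by the paper's normalization of $\chi$ and by axiom (i), together with the transformation rules $m^*\theta^l=\Ad_{g_2^{-1}}\mathrm{pr}_1^*\theta^l+\mathrm{pr}_2^*\theta^l$ and $m^*\theta^r=\mathrm{pr}_1^*\theta^r+\Ad_{g_1}\mathrm{pr}_2^*\theta^r$. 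Pulling the first identity back along $(\Phi_1,\Phi_2):M\to G\times G$ gives $(\Phi_1\Phi_2)^*\chi=\Phi_1^*\chi+\Phi_2^*\chi+\tfrac12\,d(\Phi_1^*\theta^l,\Phi_2^*\theta^r)$, which is the one genuine computation underlying the theorem. With it, \textbf{axiom (i)} is immediate: since $\Phi$ was already a moment map we have $d\sigma=-\Phi_1^*\chi-\Phi_2^*\chi-\Phi_3^*\chi$, whence
\[
d\hat\sigma=d\sigma-\tfrac12\,d\bigl(\Phi_1^*\theta^l,\Phi_2^*\theta^r\bigr)=-(\Phi_1\Phi_2)^*\chi-\Phi_3^*\chi=-\hat\Phi^*\chi.
\]

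For \textbf{axiom (ii)} the key bookkeeping observation is that the fundamental vector field of the diagonal action is $\upsilon_\xi=\upsilon_\xi^{(1)}+\upsilon_\xi^{(2)}$ for $\xi\in\mathfrak g$ and $\upsilon_\xi=\upsilon_\xi^{(3)}$ for $\xi\in\mathfrak h$. Contracting $\hat\sigma$ with $\upsilon_\xi$, I would substitute the moment-map conditions $\iota(\upsilon_\xi^{(i)})\sigma=\tfrac12\Phi_i^*(\theta^l+\theta^r,\xi)$ for the two $G$-factors and contract the correction two-form using the infinitesimal forms of the transformation rules above. The $\Ad$-terms produced by $\iota(\upsilon_\xi^{(i)})\Phi_i^*\theta^{l},\,\iota(\upsilon_\xi^{(i)})\Phi_i^*\theta^{r}$ are arranged precisely so that the leftover collapses to $\tfrac12(\Phi_1\Phi_2)^*(\theta^l+\theta^r,\xi)=\tfrac12\hat\Phi^*(\theta^l+\theta^r,\xi)$; this is a routine but sign-sensitive calculation, and equivariance of $\hat\Phi$ follows from equivariance of $\Phi$ together with invariance of the ingredients.

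The hard part is \textbf{axiom (iii)}, weak non-degeneracy $\ker\hat\sigma_x\cap\ker d\hat\Phi_x=0$, which is not a formal Maurer-Cartan manipulation but a genuine pointwise linear-algebra statement about how the correction $-\tfrac12(\Phi_1^*\theta^l,\Phi_2^*\theta^r)$ redistributes the kernels. Following the strategy of \cite{AMM}, I would take $v\in\ker\hat\sigma_x\cap\ker d\hat\Phi_x$, use $d(\Phi_1\Phi_2)(v)=0$ and $d\Phi_3(v)=0$ (the diagonal constraint couples $d\Phi_1(v)$ and $d\Phi_2(v)$ through $\Ad$) to control the images of $v$ under all three component moment maps, feed this back into $\iota(v)\hat\sigma=0$ so as to recover $\iota(v)\sigma$ as a combination of the $\iota(\upsilon_\xi^{(i)})\sigma$, and then invoke the weak non-degeneracy of the original $G\times G\times H$-space to force $v=0$. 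This kernel-chasing is where all the geometric content sits and is the main obstacle.

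Finally, I would remark that none of the three verifications uses smoothness of $\sigma$ itself: the forms $\theta^l,\theta^r,\chi$ are smooth forms on $G$ pulled back along the smooth map $\Phi$, and $d\sigma$ is smooth by the Remark above, so every identity remains valid verbatim in the $b^m$-complex ${}^{b^m}\Omega^\bullet(M)$, provided the non-degeneracy kernels are read inside ${}^{b^m}TM\subset TM$ exactly as in Lemma \ref{lemma:Exp}. Thus the same fusion construction descends to the singular quasi-Hamiltonian category and yields the new examples of $b^m$-type quasi-Hamiltonian spaces advertised in the introduction.
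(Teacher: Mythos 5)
The first thing to note is that the paper offers no proof of Theorem \ref{thm:newfusion} to compare against: the statement is imported (up to notation and a sign) from \cite{AMM}, where it is the internal fusion theorem, and the paper uses it purely as a black box to state its singular analogue (Theorem \ref{thm:bmfusion}). Judged against the original argument, your verification of axiom (i) is correct and is exactly the standard one, and your outline of axiom (ii) (contract with $\upsilon_\xi=\upsilon^{(1)}_\xi+\upsilon^{(2)}_\xi$, use the moment condition for each factor and the transformation rules of $\theta^l,\theta^r$) is the right computation. One caveat: with the paper's normalization $\chi=\frac{1}{12}(\theta^l,[\theta^l,\theta^l])$, the multiplicativity identity has a determined sign, $m^*\chi=\mathrm{pr}_1^*\chi+\mathrm{pr}_2^*\chi-\frac{1}{2}d(\mathrm{pr}_1^*\theta^l,\mathrm{pr}_2^*\theta^r)$, which is the one compatible with the \emph{plus} sign in the fusion form of \cite{AMM}; a sign of a fixed identity on $G$ cannot be \lq\lq forced by axiom (i)". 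What your computation really shows is that the minus sign in the paper's formula for $\hat{\sigma}$ is a transcription discrepancy with \cite{AMM} (or a reversed convention in the pairing), which should be flagged explicitly rather than absorbed.

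The genuine gap is axiom (iii), which you defer as \lq\lq the main obstacle", and the strategy you sketch for it would not close. Weak non-degeneracy of the unfused space applies only to vectors in $\ker\sigma_x\cap\ker d\Phi_x$, and a vector $v\in\ker\hat{\sigma}_x\cap\ker d\hat{\Phi}_x$ lies in neither kernel in general: $d(\Phi_1\Phi_2)(v)=0$ imposes only the coupling $\Ad_{\Phi_2(x)^{-1}}\bigl(\theta^l(d\Phi_1(v))\bigr)+\theta^l(d\Phi_2(v))=0$, not the vanishing of $d\Phi_1(v)$ and $d\Phi_2(v)$, and $\iota(v)\hat{\sigma}=0$ says that $\iota(v)\sigma$ equals a nonzero correction one-form. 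Even if, as you propose, you rewrite that correction via axiom (ii) as $\iota(u)\sigma$ for some combination $u$ of fundamental vector fields, invoking (iii) for the unfused space only identifies $v$ with $u$ modulo $\ker\sigma_x\cap\ker d\Phi_x$; it never produces $v=0$. The missing idea is the pointwise kernel description valid for quasi-Hamiltonian spaces: since the unfused space satisfies all three axioms, $\ker\sigma_x$ consists exactly of the vectors $\upsilon_\xi(x)$ with $\xi\in\ker(\Ad_{\Phi(x)}+1)$, and the fusion argument consists in propagating this description to $\hat{\sigma}$, showing that every element of $\ker\hat{\sigma}_x$ is of the form $\upsilon_\zeta(x)$ with $\zeta\in\ker(\Ad_{\hat{\Phi}(x)}+1)$; equivariance then gives $d\hat{\Phi}(\upsilon_\zeta)(x)=0$ only if $\Ad_{\hat{\Phi}(x)}\zeta=\zeta$, and the two eigenvalue conditions together force $\zeta=0$, hence $v=0$. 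That eigenvalue clash, not the non-degeneracy of the unfused space, is what kills $v$ in \cite{AMM}. Your closing remark that the argument persists in the ${}^{b^m}$-complex (with kernels read in ${}^{b^m}TM$) is consistent with Remark \ref{rem:machineryexamples} and Theorem \ref{thm:bmfusion}, but it inherits the same gap at (iii).
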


\begin{remark} \label{rem:machineryexamples} We can easily extend this fusion procedure to singular quasi-Hamiltonian spaces.The fusion product enables to construct new quasi-Hamiltonian spaces from two given quasi-Hamiltonian spaces and combine different type of singularities to obtain $E$-quasi-Hamiltonian spaces. For the sake of simplicity we will only present fusion constructions that stay in the $b^m$-category bearing in mind that this procedure allows to obtain more general singular examples.
\end{remark}

The $b^m$-reincarnation of theorem \ref{thm:newfusion} yields:

\begin{theorem}\label{thm:bmfusion}
    Let $M_1$ be a quasi-Hamiltonian space of $b^m$-type with associated group $G\times H_1$ and $M_2$  a quasi-Hamiltonian space  with group $G\times H_2$.  Their
fusion product $M_1\circledast M_2$
is the $b^m$-singular
quasi-Hamiltonian $(G\times H_1\times H_2)$-space 
obtained from the
quasi-Hamiltonian $(G\times G \times H_1 \times H_2)$-space
$M_1\times M_2$ by
fusing the two factors of $G$.
\end{theorem}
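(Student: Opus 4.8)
The plan is to reduce the statement to the classical fusion theorem (Theorem \ref{thm:newfusion}), the key point being that the fusion correction term is a \emph{smooth} two-form, so it modifies only the smooth part of the $b^m$-form and leaves the singular leading term intact. Thus the entire argument of \cite{AMM} can be run \emph{mutatis mutandis}, with attention paid only to where the $b^m$-structure genuinely interferes.

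First I would form the product $M_1\times M_2$ and check that it is a singular quasi-Hamiltonian $(G\times G\times H_1\times H_2)$-space of $b^m$-type. Writing $\Phi^{(1)}=(\Phi_G^{(1)},\Phi_{H_1})$ and $\Phi^{(2)}=(\Phi_G^{(2)},\Phi_{H_2})$ for the two moment maps and $p_i$ for the projections, one equips $M_1\times M_2$ with the two-form $p_1^*\sigma_1+p_2^*\sigma_2$ and moment map $(\Phi_G^{(1)},\Phi_G^{(2)},\Phi_{H_1},\Phi_{H_2})$. Since $\chi$ is multiplicative over products and the three defining conditions split along the two factors, the product axioms follow at once; because $\sigma_1\in {}^{b^m}\Omega^2(M_1)$ is singular along $Z_1$ while $\sigma_2$ is smooth, the product form is a $b^m$-form on $M_1\times M_2$ with critical set $Z_1\times M_2$, and its weak non-degeneracy holds in ${}^{b^m}T(M_1\times M_2)$ because the relevant kernels split as a direct sum.

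Next I would fuse the two copies of $G$ along the diagonal $(g,h_1,h_2)\mapsto (g,g,h_1,h_2)$, so that the fused data are
$$
\hat\sigma=p_1^*\sigma_1+p_2^*\sigma_2-\tfrac12\bigl((\Phi_G^{(1)})^*\theta^l,(\Phi_G^{(2)})^*\theta^r\bigr),
\qquad
\hat\Phi=(\Phi_G^{(1)}\cdot\Phi_G^{(2)},\Phi_{H_1},\Phi_{H_2}).
$$
The decisive observation is that the correction term is the pullback of a bi-invariant form on $G\times G$ under the \emph{smooth} map $(\Phi_G^{(1)},\Phi_G^{(2)})$; hence it lies in $\Omega^2(M_1\times M_2)\subset {}^{b^m}\Omega^2(M_1\times M_2)$ and, by Lemma \ref{lem:7.1}, only alters the smooth summand $\beta$ in the decomposition $\sigma=\alpha\wedge\frac{dt}{t^m}+\beta$ while leaving $\alpha$ unchanged. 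Consequently $\hat\sigma$ is again a $b^m$-form with the same singular part, and the identities $d\hat\sigma=-\hat\Phi^*\chi$ and $\iota(\upsilon_\xi)\hat\sigma=\tfrac12\hat\Phi^*(\theta^l+\theta^r,\xi)$ are verified by exactly the same Maurer-Cartan computations as in \cite{AMM}: these use only the structure equations of $G$ and the equivariance of the smooth moment maps into $G$, and are therefore insensitive to the singular nature of $\sigma$.

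The main obstacle I anticipate is the weak non-degeneracy $\ker\hat\sigma\cap\ker d\hat\Phi=0$, the only place where the $b^m$-structure genuinely enters, since the kernels must now be computed inside ${}^{b^m}T(M_1\times M_2)$ rather than the ordinary tangent bundle, as in Lemma \ref{lemma:Exp}. Because the fusion correction is a pointwise smooth bilinear form, the non-degeneracy argument of \cite{AMM} is a fibrewise linear-algebra statement that can be carried out verbatim in each fibre, including along $Z_1\times M_2$: there the singular generator $t^m\frac{\partial}{\partial t}$, dual to $\frac{dt}{t^m}$, is detected through the unchanged leading term $\alpha\wedge\frac{dt}{t^m}$, while any smooth form contracted with $t^m\frac{\partial}{\partial t}$ vanishes at $t=0$. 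Hence the smooth correction cannot cancel the singular contribution on the critical set, the leading pairing survives, and $M_1\circledast M_2$ is a $b^m$-singular quasi-Hamiltonian $(G\times H_1\times H_2)$-space.
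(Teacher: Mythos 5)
Your strategy of reducing to the classical AMM fusion theorem is the right one (and is essentially all the paper does, since Theorem \ref{thm:bmfusion} is stated there with no proof beyond Remark \ref{rem:machineryexamples}), but your argument rests on a premise that fails in exactly the situations the theorem is meant to cover: you assume the $G$-components $\Phi_G^{(1)},\Phi_G^{(2)}$ of the moment maps are \emph{smooth} maps into $G$, so that the correction term $-\tfrac12\bigl((\Phi_G^{(1)})^*\theta^l,(\Phi_G^{(2)})^*\theta^r\bigr)$ is a smooth $2$-form. In this paper's framework the moment map of a $b^m$-type space is allowed to be, and for a transverse action must be, of $b^m$-type, i.e.\ $\Phi\in{}^{b^m}\mathcal{C}^\infty(M)\otimes G$, such as $\Phi_S=\exp\bigl(-\tfrac{1}{(m-1)t^{m-1}}\bigr)$ for the transverse $S^1$ (see Lemma \ref{lemma:Exp} and the discussion preceding it). Indeed, if $\Phi_G^{(1)}$ were smooth, the moment map condition $\iota(\upsilon_\xi)\sigma_1=\tfrac12(\Phi^{(1)})^*(\theta^l+\theta^r,\xi)$ would force $\iota(\upsilon_\xi)\sigma_1$ to be smooth for all $\xi\in\mathfrak{g}$, which excludes any transverse circle inside $G$; yet in every fusion example of the paper the fused factor $G$ is precisely that transverse $S^1$. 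Concretely, in the example $S^2\circledast\mathbb{T}^2$ one has $\Phi_S^*\theta^l=d\mu_S=\frac{dh}{h^2}$, so the fusion correction equals $-\tfrac12\,\frac{dh}{h^2}\wedge d\theta_1$: a singular $b^2$-form, not a smooth one. As a result both pillars of your non-degeneracy argument collapse: the singular leading term is \emph{not} left intact, and the correction contracted with the singular generator does not vanish on $Z$ (for instance $\iota_{t^m\partial_t}\bigl(\frac{dt}{t^m}\wedge d\theta_1\bigr)=d\theta_1\neq0$ at $t=0$); note also that with a singular moment map one has $d\hat\Phi(t^m\partial_t)\neq 0$ on $Z$, so that direction is not even in $\ker d\hat\Phi$, contrary to the picture your argument paints.

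The statement survives, but the missing idea is different from the one you propose. The singular part contributed by the correction term is always of the form $\frac{dt}{t^m}\wedge p_2^*\lambda$ with $\lambda$ a smooth one-form pulled back from the \emph{second} factor (because $\Phi_G^{(2)}$, coming from the honest quasi-Hamiltonian space $M_2$, is smooth), while the original singular part is $\frac{dt}{t^m}\wedge p_1^*\alpha$ with $\alpha$ nowhere vanishing along $Z_1$. A form pulled back from $M_2$ annihilates all vectors tangent to the $M_1$-factor, so the total singular coefficient $p_1^*\alpha-\tfrac12 p_2^*\lambda$ is nowhere vanishing on $Z_1\times M_2$: the fused form is therefore again an honest $b^m$-form admitting a decomposition as in Lemma \ref{lem:7.1}, but with a \emph{modified} leading term, and the kernel analysis along $Z_1\times M_2$ has to be carried out with this corrected term (once that is done, your fibrewise reading of the AMM linear algebra is the right way to finish). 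Your proof as written is valid only in the special case where the $b^m$-singularity of $M_1$ is carried entirely by the $H_1$-factor, so that $\Phi_G^{(1)}$ is genuinely smooth; that case excludes all of the paper's examples and the whole transverse (nonzero highest modular weight) regime the paper is interested in.
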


We can use this procedure to get new examples of quasi-Hamiltonian spaces by combining classical examples in the theory of quasi-Hamiltonian spaces with singular quasi-Hamiltonian spaces.
    
    Let us consider several examples of this family of new singular fusion constructions in detail.

\begin{example}
        Take a $b^m$-Hamiltonian space from example \ref{ex:bmsphere} $(S^2, S^1, \omega_S, \mu_S)$ and a quasi-Hamiltonian space $(\mathbb T^2, S^1, \sigma_T, \Phi_T)$, where $\omega_S = \frac{d h}{h^2} \wedge d\theta$ and $\sigma_T = d\theta_1 \wedge d\theta_2  + \varpi$. The moment map for the $S^1$-action on $S^2$ is $\mu_S = -\frac{1}{h}$ and the group-valued moment map for $\mathbb T^2$ is $\Phi_T = \theta_1$. Applying lemma \ref{lemma:Exp}, we get a $b^2$-quasi Hamiltonian space $(S^2, S^1, \sigma_S, \Phi_S)$, where $\sigma_S = \omega_S + \varpi$, and the corresponding $S^1$-valued moment map $\Phi_S = \exp \mu_S$. Then the fusion product $S^2 \circledast \mathbb T^2$ is an $S^1$ $b^2$-quasi Hamiltonian space with the moment map $\Phi = \Phi_S \cdot \Phi_T = e^{-1/h} \theta_1$.
    \end{example}
    Consider now a similar example stepping out of the $b^m$-category as mentioned in Remark \ref{rem:machineryexamples}:
\begin{example}
        Take two $b^m$-Hamiltonian spaces from examples \ref{ex:bmsphere}, \ref{ex:b2torus} $(S^2, S^1, \omega_S, \mu_S)$ and $(\mathbb T^2, S^1, \omega_T, \mu_T)$, where $\omega_S = \frac{d h}{h^2} \wedge d\theta$ and $\omega_T = \frac{d\theta_1}{\sin^2\theta_1}\wedge d\theta_2$. The corresponding moment maps for $S^1$-action $\mu_S = -\frac{1}{h}$ and $\mu_T = -\frac{\cos\theta_1}{\sin\theta_1}$. Applying lemma \ref{lemma:Exp}, we get two corresponding quasi-Hamiltonian spaces $(S^2, S^1, \sigma_S, \Phi_S)$ and $(\mathbb T^2, S^1, \sigma_T, \Phi_T)$, where $\sigma_S = \omega_S + \varpi$, $\sigma_T = \omega_T + \varpi$ and the corresponding $S^1$-valued moment maps $\Phi_S = \exp \mu_S$ and $\Phi_T = \exp \mu_T$. Then the fusion product $S^2 \circledast \mathbb T^2$ is an $S^1$-$E$-quasi Hamiltonian space with the moment map $\Phi = \Phi_S \cdot \Phi_T = e^{-1/h - \cos \theta_1 / \sin \theta_1}$.
    \end{example}
    More generally, any fusion product of a $b^m$-Hamiltonian and quasi-Hamiltonian spaces will lead to a singular quasi-Hamiltonian space.
    \begin{example}
        Consider a $b^m$-Hamiltonian $G$-space $(M_1, G, \omega)$ with $\omega$ a $b^m$-symplectic form. Applying lemma \ref{lemma:Exp}, we obtain a singular quasi-Hamiltonian space $(M_1, G, \sigma)$. From theorem \ref{th:bmAction}, we can assume that in a finite covering $G=S^1\times H_1$. 
        Take $M_2$ be a quasi-Hamiltonian $S^1\times H_2$ space, then in view of theorem \ref{thm:bmfusion} the fusion product $M_1\circledast M_2$ is a $G\times H_2$ $b^m$-quasi Hamiltonian space. 
 
        Another particular case comes by
         considering as $M_1$ the former examples \ref{ex:bmsphere} and \ref{ex:bmtorus} and as $M_2$ the conjugacy classes example in Example \ref{ex:conjugacy}.
    \end{example}

We can indeed obtain quasi-Hamiltonian spaces with a $b^m$-type singularity for any prescribed group of type $S^1\times H$ as the following example shows:

\begin{example}Given a Lie group $H$, Consider a $b^m$-Hamiltonian $S^1\times H$-space $(M_1, G, \omega)$ with $\omega$ a $b^m$-symplectic form. 
 Take $M_2$ be any quasi-Hamiltonian $H$-space, then in view of theorem \ref{thm:bmfusion} the fusion product $M_1\circledast M_2$ is a $S^1\times H$-$b^m$-quasi Hamiltonian space. 
\end{example}

 We can get other constructions of more general singularities via  fusion product examples using Remark  \ref{rem:machineryexamples}.
   
   \subsection{Reduction for singular quasi-Hamiltonian spaces}
   
    In the work \cite{BTW}, the authors prove a reduction theorem for quasi-Hamiltonian spaces using two auxiliary statements from \cite{AMM}. 
    
    Now let us provide the singular quasi-Hamiltonian reduction theorem as stated in \cite{AMM} and \cite{boalch}:

    \begin{theorem}[singular quasi-Hamiltonian reduction] \label{th:qHred}
        Let $M$ be a singular quasi-Hamiltonian $G_1 \times G_2$-space with non-vanishing  highest modular weight (i.e. one of the components of the product includes transverse $S^1$-action), a  singularity of $b^m$-type and the moment map $(\Phi_1,\Phi_2): M \to G_1 \times G_2$. Let $f \in G_1$ be a regular value of the moment map $\Phi_1: M \to G_1$ and $Z_f \subset G_1$ be its centralizer. Then the pull-back of the $2$-form $\sigma \to \Phi_1^{-1}(f)$ descends to the reduced space 
        $$
        M_f = \Phi_1^{-1} (f) / Z_f
        $$ 
        and makes it into quasi-Hamiltonian $G_2$-space. If $(M, \sigma, \Phi, G)$ satisfies all conditions from the definition of a quasi-Hamiltonian space except weakly non-degeneracy condition, so does the resulting reduced space. 
        In particular, if $G_2$ is abelian then $M_f$ is symplectic.
    \end{theorem}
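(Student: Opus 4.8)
The plan is to follow the same \emph{black-box} strategy used in the proof of the $b^m$-Marsden--Weinstein reduction (Theorem \ref{th:bmHamRed}): exploit the splitting of the group-valued moment map recorded in Remark \ref{rem:split}, reduce \emph{first} by the transverse $S^1$ in order to clear away the $b^m$-singularity, and then feed the resulting \emph{smooth} quasi-Hamiltonian space into the classical reduction theorem of \cite{AMM, BTW}. Since the hypothesis of non-vanishing highest modular weight guarantees that one factor of the product contains a genuine transverse $S^1$-action, after passing to a finite cover (Proposition \ref{prop:fibr}) I may assume this circle sits inside the reducing factor and write $G_1 = S^1 \times H_1$.

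First I would record the local normal form. By Lemma \ref{lem:7.1} and Corollary \ref{cor:bmHam}, in a neighbourhood of the critical set $Z$ the invariant two-form decomposes as $\sigma = d\theta \wedge \frac{dt}{t^m} + \beta$ with $\beta$ smooth, and the transverse $S^1$-action is $b^m$-Hamiltonian with $S^1$-valued moment map $\Phi_S = \exp\!\left(\frac{-1}{(m-1)t^{m-1}}\right)$ (respectively $|t|$ when $m=1$). Crucially, the first quasi-Hamiltonian axiom already forces $d\sigma = -\Phi^*\chi$ to be \emph{smooth}, so all the genuinely singular information is concentrated in the term $d\theta \wedge \frac{dt}{t^m}$, which is precisely what the $S^1$-reduction is designed to annihilate.

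Next I would carry out the reduction by stages. Reducing first with respect to the transverse $S^1 \subset G_1$ at the point at infinity $\mathbf{0}$ --- exactly as in Step 2 of the proof of Theorem \ref{th:bmHamRed} --- collapses the singular term and produces a smooth space $M/\!/S^1$ carrying a residual action of $H_1 \times G_2$, with descended two-form built from $\beta$. One then checks that the quasi-Hamiltonian axioms survive the quotient: equivariant closedness $d\sigma = -\Phi^*\chi$ and the moment-map condition descend because $d\sigma$ and the relevant pullbacks are already smooth (so this already establishes the stated variant in which the weak non-degeneracy condition is dropped), while weak non-degeneracy itself is inherited from the $b^m$-weak non-degeneracy of $\sigma$ restricted to $^{b^m}TM \subset TM$, as in the proof of Lemma \ref{lemma:Exp}. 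The residual space $M/\!/S^1$ is thus a bona fide \emph{smooth} quasi-Hamiltonian $H_1 \times G_2$-space, to which the reduction theorem of \cite{AMM, BTW} applies directly at the regular value $f$ with centralizer $Z_f$, yielding the quasi-Hamiltonian $G_2$-space $M_f = \Phi_1^{-1}(f)/Z_f$. Because quasi-Hamiltonian reduction by a product group can be performed in stages (via the fusion product of Theorem \ref{thm:newfusion}), reducing directly by $G_1$ agrees with reducing first by $S^1$ and then by $H_1$, so the two descriptions of $M_f$ coincide. Finally, when $G_2$ is abelian the Remark following the definition of a quasi-Hamiltonian space identifies the descended two-form as genuinely symplectic; since the singularity was already removed in the $S^1$-stage, $M_f$ is an honest symplectic manifold.

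The main obstacle I expect is making rigorous the claim that the $S^1$-reduction \emph{at infinity} is well defined in the \emph{group-valued} setting and produces a smooth quotient transversally across the former critical hypersurface. In the Lie-algebra-valued ($b^m$-Hamiltonian) case this is handled by the slice theorem and the explicit form of $\mu_S$; here one must instead work with the compactified exponential $\Phi_S = \exp(\mu_S)$ and verify that $\Phi_1^{-1}(f)$ meets $Z$ cleanly, so that the level set passes smoothly through $t = 0$ after quotienting. Establishing this transversality, together with the compatibility of the \emph{by stages} procedure with a group-valued (rather than linear) moment map, is the delicate technical point; once it is in place, the remaining axiom-chasing is routine and the classical reduction of \cite{AMM, BTW} may be invoked verbatim.
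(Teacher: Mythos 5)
Your proposal solves a different problem than the one Theorem \ref{th:qHred} actually states, and its pivotal step is circular. The paper's own proof is a direct transcription of the proof of Theorem 5.1 in \cite{AMM}, carried out with the singular forms in place: one first shows $i^*\sigma$ is $Z_f$-basic (for $\xi\in\mathfrak{z}_f$ the contraction $\iota(\upsilon_\xi)\,i^*\sigma=\tfrac{1}{2}\,i^*\Phi_1^*(\theta^l+\theta^r,\xi)$ vanishes because $\Phi_1\equiv f$ on the level set), and then checks that conditions (i) and (ii) descend to the quotient, taking the reduced form a priori in ${}^{b^m}\Omega^2(M_f)$. These computations are purely formal and hold verbatim for $b^m$-forms: no desingularization, no normal form, no slice theorem, and no restriction on the value $f$ is needed (the singularity causes no trouble because $d\sigma$ is already smooth by Lemma \ref{lem:7.1}, and nothing in the argument requires inverting $\sigma$). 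Your plan instead reduces first by the transverse $S^1$ at the point at infinity and then feeds the smooth quotient into classical reduction --- which is precisely the strategy the paper reserves for the later Theorem \ref{th:bmHamRedquasi}, whose extra hypothesis (the $S^1$-component of the value is $\exp(0)$) you implicitly import. Theorem \ref{th:qHred}, however, concerns an \emph{arbitrary} regular value $f\in G_1$ and the quotient by its centralizer $Z_f$; your staged argument does not cover general $f$, and for non-central $f$ (where $Z_f\subsetneq G_1$) you never address how $Z_f$ is matched with a product of reducing subgroups in the two-stage picture.

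The structural gap is the one you half-acknowledge at the end: the claim that ``reducing directly by $G_1$ agrees with reducing first by $S^1$ and then by $H_1$'' is reduction by stages for group-valued moment maps, and in the paper's logical order this is a \emph{consequence} of Theorem \ref{th:qHred} --- the paper says explicitly, just after Theorem \ref{th:BTWSl}, that Theorem \ref{th:qHred} is what ``allows us to do reduction by stages even without having a splitting of the moment map at our disposal.'' Your appeal to the fusion product (Theorem \ref{thm:newfusion}) does not supply this: fusion constructs a quasi-Hamiltonian structure on a product by merging two $G$-factors of the acting group; it says nothing about one-step reduction by $G_1$ being isomorphic to iterated reduction by its factors. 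So the key step of your argument presupposes the theorem you are proving. The repair is to drop the staged route here and verify the three axioms on $M_f$ directly as in \cite{AMM}; the desingularize-then-reduce strategy belongs to Theorem \ref{th:bmHamRedquasi}, where it is legitimate precisely because Theorem \ref{th:qHred} is by then available as a black box.
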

    
    For the non-singular version see Theorem 5.1 in \cite{AMM}. We follow the same frame of the proof.
    \begin{proof}
        First, we need to show that $i^* \omega$ is $Z_f$-basic, where $i$ is the embedding $\mu_1^{-1} (f) \hookrightarrow M$. Take $\pi$ to be the projection from $\mu_1$ onto $M_f$. Taking $\xi \in \mathfrak{z}_f$, the Lie algebra of $Z_f$, we get 
        $$
            \iota(\upsilon_\xi) i^* \omega = i^* \iota(\upsilon_\xi) \omega = i^* \mu_1^* (\theta^l + \theta^r, \xi) = 0.
        $$
        \underline{Condition (i):} Take $\chi_1$ and $\chi_2$ be the canonical $3$-forms for $G_1$ and $G_2$ respectively. Then
        $$
            \pi^* d \omega_f = d i* \omega = i^* d \omega = - i^* (\Phi_1 \chi_1 + \Phi_2 \chi_2) = - i^* \Phi_2 \chi_2 = - \pi^* (\Phi_2)^*_f \chi_2.
        $$

        \underline{Condition (ii):} Let $\sigma_f \in ^{b^m} \Omega^2 (M_f)^{G_2}$ be the unique $2$-form such that $\pi^* \sigma_f = i^* \sigma$. The restriction $i^* \Phi_2$ is $Z_f \times G_2$-invariant and descends to an equivariant map $(\Phi_2)_f \in ^{b^m} \mathcal C^\infty (M_f, G_2)^{G_2}$.
    \end{proof}

    Now we can provide the last statement needed to finalize our reduction theorem. 
    
    \begin{definition}
        Let $(M, \sigma, \Phi)$ be a quasi-Hamiltonian $G$-space. Given a subspace $W \subset T_x M$, let $W^\sigma \subset T_x M$ denote the subspace of $\sigma$ orthogonal vectors. The symplectic slice at $p \in M$ is the vector space
        $$
            V = (T_p \mathcal O)^\sigma / \left ( T_p \mathcal O \cap (T_p \mathcal O)^\sigma \right),
        $$
        where $\mathcal O = G \cdot p$ is the $G$ orbit of $p$. 
    \end{definition}
    Notice that, even if $M$ is not a symplectic space, by the axioms for quasi-Hamiltonian $G$-spaces, the kernel of $\sigma_p$ is contained entirely in $T_p \mathcal O$. Hence $V$ is a symplectic vector space.
    
    \begin{theorem}[quasi-Hamiltonian Slice Theorem, Bott-Tolman-Weitsman] \label{th:BTWSl} 
        Let $(M, \sigma, \Phi)$ be a quasi-Hamiltonian $G$-space. For any $p \in M$, let $H = Stab(p)$, $K = Stab(\Phi(p))$, and $V$ be the symplectic slice at $p$. There exists a neighbourhood of the orbit $G \cdot p$ which is equivariantly diffeomorphic to a neighborhood of the orbit $G \cdot [e, 0, 0]$ in
        $$
            Y := G \times_H ((\mathfrak h^\perp \cap \mathfrak k) \times V).
        $$
        In terms of the diffeomorphism, the moment map can be written as 
        $$
            \Phi([g, \gamma, v]) = Ad_g(\Phi(p) \exp(\gamma + \varphi (v)),
        $$
        where $\varphi : V \to \mathfrak h^* \simeq \mathfrak h$ is the moment map for slice representation.
    \end{theorem}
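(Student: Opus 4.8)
The plan is to reduce the group-valued normal form to the classical Hamiltonian (linear moment map) case through the exponential chart, and then to quote the Guillemin--Sternberg normal form together with an induction on the group. First I would record the elementary structural facts. Since $\Phi$ is equivariant for the conjugation action, $h\cdot p=p$ forces $\Phi(p)=h\,\Phi(p)\,h^{-1}$, so with $a:=\Phi(p)$ we get $H\subset K=Z_a$ and moreover $a\in Z(K)$. The orbit tangent space is $T_p\mathcal{O}\cong\mathfrak{g}/\mathfrak{h}$, and by the observation preceding the theorem $\ker\sigma_p\subset T_p\mathcal{O}$; hence $V=(T_p\mathcal{O})^{\sigma}/\big(T_p\mathcal{O}\cap (T_p\mathcal{O})^{\sigma}\big)$ is a genuine symplectic vector space carrying a linear Hamiltonian $H$-action with moment map $\varphi$.

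The key device is to pass to a \emph{symplectic cross-section}. I would choose a neighbourhood $\mathcal{S}$ of $a$ of the form $a\exp(B_\varepsilon\cap\mathfrak{k})$, which is a slice for the conjugation action transverse to the conjugacy class of $a$, and set $Y_0:=\Phi^{-1}(\mathcal{S})$. This $Y_0$ is a $K$-invariant submanifold through $p$. On $Y_0$ the local diffeomorphism $\exp\colon B_\varepsilon\cap\mathfrak{k}\to \mathcal{S}a^{-1}$ lets me define $\mu$ by $\Phi|_{Y_0}=a\exp(\mu)$ with $\mu(p)=0$, and then, subtracting the correction two-form of Lemma~\ref{lemma:Exp}, to set $\omega:=\sigma|_{Y_0}-\mu^{*}\varpi$. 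Since $\chi$ is bi-invariant we have $(a\exp\mu)^{*}\chi=(\exp\mu)^{*}\chi$, so the content of the local converse of Lemma~\ref{lemma:Exp} is that $(Y_0,\omega,\mu)$ is an ordinary Hamiltonian $K$-space; the weak non-degeneracy axiom~(iii), together with $\ker\sigma_p\subset T_p\mathcal{O}$, guarantees that $\omega$ is non-degenerate in a neighbourhood of $p$ inside $Y_0$.

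I would then apply the Guillemin--Sternberg normal form to the Hamiltonian $K$-space $(Y_0,\omega,\mu)$. Under the inner-product identification $\mathfrak{g}\cong\mathfrak{g}^{*}$ (so that $\mathfrak{h}^{0}\cap\mathfrak{k}^{*}\cong\mathfrak{h}^{\perp}\cap\mathfrak{k}$) this produces a neighbourhood of $K\cdot[e,0,0]$ in $K\times_{H}\big((\mathfrak{h}^{\perp}\cap\mathfrak{k})\times V\big)$ on which $\mu([k,\gamma,v])=\Ad_{k}(\gamma+\varphi(v))$. Inducing this $K$-model up along $G\times_{K}(-)$ and using the canonical identification $G\times_{K}\big(K\times_{H}(\cdots)\big)\cong G\times_{H}(\cdots)$ returns the asserted model $Y=G\times_{H}\big((\mathfrak{h}^{\perp}\cap\mathfrak{k})\times V\big)$. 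Finally, undoing the $\varpi$-twist and re-exponentiating converts the linear moment map back to the group-valued one; here $a\in Z(K)$ is used to commute $a$ past $\exp(\gamma+\varphi(v))$, yielding $\Phi([g,\gamma,v])=\Ad_{g}\big(a\exp(\gamma+\varphi(v))\big)$.

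The hard part will be the equivariant comparison of the two quasi-Hamiltonian structures along the orbit. Because $\sigma$ is not closed but only satisfies $d\sigma=-\Phi^{*}\chi$, the symplectic Moser trick cannot be run on $M$ directly; the comparison must be carried out on the cross-section $Y_0$, where the $\varpi$-twist makes the structure genuinely symplectic and the equivariant relative Moser theorem applies, and then transported back through $\exp$ while checking that the fixed bi-invariant three-form $\chi$ is respected along the homotopy and that weak (rather than full) non-degeneracy is preserved. In the singular $b^{m}$-setting this step carries one extra ingredient: by Corollary~\ref{cor:bmHam} and Remark~\ref{rem:split} the transverse $S^{1}$-direction splits off as a $b^{m}$-Hamiltonian factor, which is normalised by the equivariant $b^{m}$-Moser theorem (Theorem~\ref{theorem:bmmoser}) exactly as in the $b^{m}$-slice theorem~\ref{th:bmHsl}, while the remaining smooth $H$-directions are handled by the argument above; the two are then recombined over the product, just as in the proof of Theorem~\ref{th:bmHsl}.
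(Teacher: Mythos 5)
The paper offers no proof of this statement: Theorem \ref{th:BTWSl} is quoted from \cite{BTW} and used as a black box in the subsequent singular reduction argument, so there is no in-paper argument to compare yours against. Judged on its own, your proposal follows what is essentially the standard route behind the result (the one implicit in \cite{AMM} and \cite{BTW}): pass to the cross-section $Y_0=\Phi^{-1}\bigl(a\exp(B_\varepsilon\cap\mathfrak k)\bigr)$, untwist by $\varpi$ so that the local converse of Lemma \ref{lemma:Exp} turns $Y_0$ into an honest Hamiltonian $K$-space, apply the Guillemin--Sternberg normal form there, and induce up through $G\times_K\bigl(K\times_H(\cdot)\bigr)\cong G\times_H(\cdot)$, using the centrality of $a=\Phi(p)$ in $K$ to recover the group-valued formula. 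The structural observations ($H\subset K$, $a\in Z(K)$, $\ker\sigma_p\subset T_p\mathcal O$) and the final moment-map computation are all correct.

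Two steps are asserted where they need to be proved. First, the cross-section step is itself a theorem (the quasi-Hamiltonian cross-section theorem of Alekseev--Malkin--Meinrenken): you need transversality of $\Phi$ to $\mathcal S$, smoothness and $K$-invariance of $Y_0$, and, crucially, that the flow-out $G\times_K Y_0\to M$ is a diffeomorphism onto an \emph{open} neighbourhood of $G\cdot p$; without this last point your construction only models a neighbourhood of $K\cdot p$ in $Y_0$, not a neighbourhood of the orbit in $M$. Second, the symplectic slice produced by Guillemin--Sternberg on $(Y_0,\omega)$ is the slice of the $K$-action computed with $\omega$ inside $T_pY_0$, whereas the $V$ of the statement is computed with $\sigma$ and the $G$-orbit inside $T_pM$; these must be identified $H$-equivariantly and symplectically, which is elementary linear algebra but not tautological, and your proof never performs it. Finally, your closing paragraph on the $b^m$-setting is out of scope: the theorem concerns smooth quasi-Hamiltonian spaces, and the paper's singular machinery (Corollary \ref{cor:bmHam}, Remark \ref{rem:split}, Theorem \ref{theorem:bmmoser}) enters only afterwards, when this theorem is applied to a space from which the singularity has already been removed; importing it into the proof of the theorem itself confuses the logical order and adds nothing.
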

    
    Finally, we can use the quasi-Hamiltonian reduction to complete the statement and the proof of the $b^m$-quasi-Hamiltonian reduction theorem. In definition \ref{def:bmHamMM}, the moment map $\mu$ is an element in  $ ^{b^m} \mathcal{C}^\infty (M) \otimes \mathfrak{g}^*$. In the quasi-Hamiltonian realm, one can consider $\Phi \in ^{b^m} \mathcal{C}^\infty (M) \otimes G$ to be a suitable moment map. 
    
    This allows us to investigate the case of a group action on a $b$-manifold when the $H$-component is not $b^m$-Hamiltonian. For the quasi-Hamiltonian reduction we will abuse the notation of Marsden-Weinstein reduction even though the reduced space $M // G$ will not be necessarily symplectic (see Theorem \ref{th:bmHamRed} and Corollary \ref{cor:Abel}). It can be seen as a singular quasi-Hamiltonian with the moment map taking values in $S^1 \times H$. Then Theorem \ref{th:qHred} allows us to do reduction by stages even without having a splitting of the moment map at our disposal as in Theorem \ref{th:bmHamRed}. 
    Notice that the transverse $S^1$-action is  $b^m$-Hamiltonian in view of Corollary \ref{cor:bmHam}).
    Theorem \ref{th:qHred} declares $M//S^1$ as a quasi-Hamiltonian space with an induced $H$-action. Bearing this in mind, we consider solely the $S^1$-action on $M$ and perform the singular Marsden-Weinstein reduction depicted in Theorem \ref{th:bmHamRed} with respect to this circle action.
    
    As a result, we notice two essential properties of the reduced space: it is a quasi-Hamiltonian space endowed with an $H$-action, and the singularity has been eliminated from the singular form. This means that the reduction is a honest quasi-Hamiltonian space. This allows us to use the Bott-Tolman-Weitsman quasi-Hamiltonian reduction Theorem \ref{th:BTWSl} directly.
    
     We denote by $\mathbf{f_0}$ a regular value of the moment map. In view of Corollary \ref{cor:bmHam} as in Remark \ref{rem:split}, the moment map splits.
     
    This leads us to the final statement of our main theorem under the following \underline{assumptions}:
    \begin{itemize}
        \item The induced action of $H$ is locally free.
        \item The action of $S^1$ on the covering model associated to the finite group $\Gamma$ is free.
        \item The first component of $\mathbf{f_0}$ is $\exp (0)$ (a regular value  for the induced $S^1$-action).
     \end{itemize}
     
    \begin{theorem}[ singular quasi-Hamiltonian reduction of $b^m$-type] \label{th:bmHamRedquasi} 
        Given a singular quasi-Hamiltonian space with a $b^m$-type singularity $(M, \sigma, Z)$ and a transverse $G$-action with group-valued moment map $\Phi$. If the highest modular weight for the $S^1$-component of the $G$-action is non-zero, the pre-image of a regular point $\Phi^{-1}(\mathbf{f_0})$ admits an induced action of $G$. The space of orbits of the induced action $M//G$ is quasi-Hamiltonian.
    \end{theorem}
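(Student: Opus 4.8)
The plan is to carry out the reduction in two stages, using the singular $b^m$-Hamiltonian $S^1$-reduction as the first stage and the Bott-Tolman-Weitsman quasi-Hamiltonian reduction as the second, exactly as the narrative preceding the statement suggests. First I would invoke Proposition \ref{prop:fibr} to write the transverse group as $G = S^1 \times H$ (passing, if necessary, to the finite cover associated to $\Gamma$ so that $S^1$ acts freely, as in the proof of Theorem \ref{th:bmHsl}). By Remark \ref{rem:split} the group-valued moment map then splits as $\Phi = (\Phi_S, \Phi_H)$ along the two factors, and by Corollary \ref{cor:bmHam} the transverse $S^1$-action is $b^m$-Hamiltonian with $\sigma = d\theta \wedge \frac{dt}{t^m} + \beta$ near $Z$ and $\Phi_S = \exp(\mu_S)$ the compactified $b^m$-Hamiltonian moment map computed in the lemma above.

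Next I would reduce by the $S^1$-factor alone. Since the highest modular weight of the $S^1$-component is nonzero, Theorem \ref{th:bmHamRed} applies to the circle action: the level set $\Phi_S^{-1}(\exp(0))$ (which is $\mu_S^{-1}(\mathbf 0)$ in the language of points at infinity) is a $b^m$-presymplectic submanifold carrying the residual $H$-action, and the quotient $M /\!/ S^1$ is a smooth space on which, crucially, the singularity of the form has been cleared away. Simultaneously Theorem \ref{th:qHred} (with $G_1 = S^1$, $G_2 = H$) guarantees that the descended two-form endows $M /\!/ S^1$ with the structure of a quasi-Hamiltonian $H$-space with moment map the descent of $\Phi_H$. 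The key reason these two conclusions are compatible is the splitting of $\Phi$: the $H$-component $\Phi_H$ is independent of the normal coordinate $t$, so it survives the first reduction unchanged, and $M /\!/ S^1$ is a genuine (non-singular) quasi-Hamiltonian $H$-space.

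Finally, since $M /\!/ S^1$ is now an honest quasi-Hamiltonian $H$-space, I would apply the Bott-Tolman-Weitsman reduction (Theorem \ref{th:BTWSl}, together with Theorem \ref{th:qHred}) directly to the remaining $H$-action at the regular value $\mathbf{f_0}$, whose first component is $\exp(0)$, obtaining $M /\!/ G = (M /\!/ S^1) /\!/ H$ as a quasi-Hamiltonian space. This also yields the stated induced $G$-action on $\Phi^{-1}(\mathbf{f_0}) = \Phi_S^{-1}(\exp(0)) \cap \Phi_H^{-1}(f_H)$, and when $H$ is abelian the last stage produces a symplectic space, matching the symplectic conclusion of Theorem \ref{th:bmHamRed}.

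The hard part will be the first stage: verifying that the $b^m$-Hamiltonian reduction of Theorem \ref{th:bmHamRed} and the quasi-Hamiltonian reduction of Theorem \ref{th:qHred} genuinely agree on $\Phi_S^{-1}(\exp(0))/S^1$, i.e. that the two-form obtained by descending $i^*\sigma$ is at once non-singular (the $\frac{dt}{t^m}$ term cancels under reduction, as in Theorem \ref{th:bmHamRed}) and satisfies all quasi-Hamiltonian axioms with the descended moment map. Concretely this means checking that the relation $d\sigma = -\Phi^*\chi$ restricts and descends correctly through the level set and that weak non-degeneracy is preserved, for which one uses that $\ker \sigma_p \subset T_p\mathcal{O}$ (noted before Theorem \ref{th:BTWSl}) so that the symplectic slice is unaffected by the singular direction. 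Once the first stage is known to deliver a bona fide smooth quasi-Hamiltonian space, the second stage is a routine application of the existing quasi-Hamiltonian reduction machinery.
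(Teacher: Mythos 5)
Your proposal follows essentially the same route as the paper: both perform reduction in two stages, first reducing by the transverse $S^1$-action via the $b^m$-Marsden--Weinstein reduction (Theorem \ref{th:bmHamRed}), using Corollary \ref{cor:bmHam} and the splitting of Remark \ref{rem:split}, so that the singularity is cleared and Theorem \ref{th:qHred} endows $M//S^1$ with an honest quasi-Hamiltonian $H$-space structure, and then applying the Bott--Tolman--Weitsman quasi-Hamiltonian machinery (Theorems \ref{th:qHred} and \ref{th:BTWSl}) to the residual $H$-action. Your closing remark on verifying the compatibility of the two reduction procedures on $\Phi_S^{-1}(\exp(0))/S^1$ is in fact a point the paper passes over more quickly, so your attempt is, if anything, slightly more explicit about where the real work lies.
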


When the group $G$ is abelian (confer \cite{HJS} and \cite{boalch} for details in the standard quasi-Hamiltonian case), the theorem above yields a honest symplectic orbifold by reduction:
    \begin{cor} \label{cor:Abel}
        If the group $G$ is abelian, the reduced quasi-Hamiltonian space $M // G$ is a symplectic orbifold.
    \end{cor}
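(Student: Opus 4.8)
The plan is to obtain the statement as a refinement of Theorem~\ref{th:bmHamRedquasi}: that theorem already guarantees that $M/\!/G$ is quasi-Hamiltonian, and it remains to promote this to a genuine symplectic (orbifold) structure using the hypothesis that $G$ is abelian. The key structural observation I would use is that, by Proposition~\ref{prop:fibr}, the transverse group splits (on a finite cover) as $S^1\times H$, and since $G$ is abelian the factor $H$ is abelian as well; this is the extra ingredient that the cited abelian result---the remark following Example~\ref{ex:conjugacy}, see \cite{boalch,HJS}---needs in order to force weak non-degeneracy to become honest non-degeneracy.

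First I would carry out the reduction in two stages, exactly as in the proof of Theorem~\ref{th:bmHamRedquasi}. By Remark~\ref{rem:split} the group-valued moment map splits on the cover as $(\Phi_S,\Phi_H)$, and by Corollary~\ref{cor:bmHam} the transverse circle action is genuinely $b^m$-Hamiltonian, with $\sigma=d\theta\wedge\frac{dt}{t^m}+\beta$ near $Z$. The first stage reduces the $S^1$-factor at the regular value $\exp(0)$: applying the $b^m$-Marsden--Weinstein reduction (Theorem~\ref{th:bmHamRed}) to this circle action produces a smooth quotient $M/\!/S^1$ on which the $\frac{dt}{t^m}$-term has been cleared away. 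The point I would check carefully is that $M/\!/S^1$ is an honest (non-singular) quasi-Hamiltonian $H$-space: the component $\Phi_H$ is independent of $t$ and therefore descends, while the removal of the singular term leaves a smooth $2$-form still satisfying axioms (i)--(iii) for the residual $H$-action.

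For the second stage I would invoke the abelian hypothesis. Since $H$ is abelian its Cartan $3$-form vanishes, so condition (i) on $M/\!/S^1$ reads $d\sigma=0$, and together with the moment-map condition (ii) and weak non-degeneracy (iii) this forces $\ker\sigma=0$; equivalently one quotes the abelian remark directly to conclude that the quasi-Hamiltonian $2$-form on $M/\!/S^1$ is already symplectic. The residual $H$-action is then an ordinary Hamiltonian action on a symplectic manifold, and reducing by $H$ (through the Bott--Tolman--Weitsman slice, Theorem~\ref{th:BTWSl}, or plain Marsden--Weinstein reduction) yields $M/\!/G=(M/\!/S^1)/\!/H$. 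Because the $H$-action is only assumed locally free, the quotient carries at worst the orbifold singularities of \cite{GGK}, so $M/\!/G$ is a symplectic orbifold, as claimed.

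The main obstacle I anticipate is the first stage: one has to make precise that eliminating the transverse $S^1$ genuinely desingularizes $\sigma$---that the reduced $2$-form is smooth rather than singular---and that axioms (i)--(iii) are inherited by the quotient for the $H$-action, so that the second stage is a bona fide quasi-Hamiltonian (indeed symplectic) reduction. Once $M/\!/S^1$ is known to be a smooth quasi-Hamiltonian $H$-space with $H$ abelian, the passage to a symplectic orbifold is formal.
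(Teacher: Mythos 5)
Your proposal is correct and takes essentially the same route as the paper: the corollary there is obtained exactly from Theorem \ref{th:bmHamRedquasi}'s two-stage reduction (first the transverse $S^1$ via the $b^m$-Marsden--Weinstein Theorem \ref{th:bmHamRed}, which eliminates the singularity, then the residual $H$-action) combined with the standard fact, cited in the paper from \cite{boalch} and \cite{HJS}, that a quasi-Hamiltonian space with abelian structure group carries a genuinely symplectic two-form. The only cosmetic differences are that you apply the abelian fact already at the intermediate stage $M//S^1$ rather than to the final quotient, and that the second-stage reduction is more accurately quoted as the quasi-Hamiltonian reduction of Theorem \ref{th:qHred} rather than ``plain'' Marsden--Weinstein, since $\Phi_H$ is $H$-valued (not $\mathfrak{h}^*$-valued), a point your hedged phrasing already essentially covers.
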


\end{document}